\newtheorem{thm}{Theorem}
\newtheorem{lem}[thm]{Lemma}
\newtheorem{defn}[thm]{Definition}
\newtheorem{prop}[thm]{Proposition}
\newtheorem{cor}[thm]{Corollary}
\newtheorem{conj}[thm]{Conjecture}
\newtheorem{expl}[thm]{Example}
\newtheorem{rem}[thm]{Remark}
\def\homeo{\approx}
\def\Map{\operatorname{Map}}
\def\Hom{\operatorname{Hom}}
\def\Min{\operatorname{Min}}
\def\Max{\operatorname{Max}}
\def\res{\operatorname{res}}
\def\Int{\operatorname{Int}}
\def\Clo{\operatorname{Cl}}
\def\Supp{\operatorname{Supp}}
\def\subdiv{\operatorname{Sd}}
\def\tentdiv{\operatorname{Td}}
\def\subdivision{\Category{SubDiv}}
\def\ast{\hbox{\footnotesize$*$}}
\def\hitem{\addtocounter{enumi}{1}\hspace{1em}\text{\rm(\number\value{enumi})\,~}}
\def\hiitem{\addtocounter{enumii}{1}\hspace{1em}\text{\alph{enumii})\,~}}
\def\Category#1{{\sf{#1}}}
\def\Object#1{\mathcal{O}(\text{\small \(#1\)})}
\def\Morphism#1{\mathcal{M}(\text{\small \(#1\)})}
\def\mathbold#1{\mbox{\boldmath $#1$}}
\def\diffeological{\Category{Diffeology}}
\def\differentiable{\Category{Differentiable}}
\def\topological{\Category{Topology}}
\def\domain{\Category{Domain}}
\def\open{\Category{Open}}
\def\convex{\Category{Convex}}
\def\sets{\Category{Set}}
\def\cats{\Category{Cat}}
\def\gradedalgebra{\Category{GradedAlgebra}}
\def\Cubical{\underline{\Category{\simp}}}
\def\Cube{\simp}
\def\simp{\Box}
\def\Cubic{C}
\def\cubictxt{cubic set}
\def\cubicstxt{cubic sets}
\def\Cubicaltxt{Cubical}
\def\Cubetxt{Cube}
\def\cubicaltxt{cubical}
\def\sitetxt{monoidal site}
\def\cubedef{0 \!\leq\! t_{1}, \cdots, t_{n} \!\leq\! 1}
\def\proofname{{\it Proof}\,:\,\ }
\def\proofolname{{\it Outline of the proof}\,:\,\ }
\def\pproofname{{\it Proof of }}
\def\pproofolname{{\it Outline of the proof of }}
\def\aproofname{{\it. }}
\def\aproofolname{{\it. }}
\def\@proof[#1]{\pproofname{\it #1}\aproofname}
\def\@proofol[#1]{\pproofolname{\it #1}\aproofolname}
\renewenvironment{proof}{\par\vskip-1.0ex\noindent\@ifnextchar[\@proof{\proofname}}{{\unskip\nobreak\hfill{ \it\,\qedsymbol}}\par\vskip5pt}
\bysame\newcommand{\bysame}{\leavevmode\hbox to3em{\hrulefill}\,}\fi
\numberwithin{equation}{section}
\numberwithin{thm}{section}
\def\Dom{\operatorname{Dom}}
\def\Ker{\operatorname{Ker}}
\def\Im{\operatorname{Im}}
\def\id{\operatorname{id}}
\def\incl{\mbox{in}}
\def\proj{\mbox{pr}}
\def\smallproj{\mbox{\scriptsize pr}}
\def\midvert{\,\mathstrut\vrule\,}
\def\cardinal{\mathbb{N}_{0}}
\def\integral{\mathbb{Z}}
\def\real{\mathbb{R}}
\def\qedsymbol{\vbox{\hrule\hbox{\vrule height1.2ex\hskip1.2ex\vrule}\hrule}\hskip0.25ex} 
\def\der#1by#2{\frac{\operatorname{\mathit{d}}\hspace{-0.1mm}#1}{\operatorname{\mathit{d}}\hspace{-0.1mm}#2}}
\def\pder#1by#2{\frac{\operatorname{\partial}\hspace{-0.1mm}#1}{\operatorname{\partial}\hspace{-0.1mm}#2}}
\def\npder#1by#2times#3{\frac{\operatorname{{\partial\,}^{#3}}\hspace{-0.2mm}#1}{\operatorname{\partial}\hspace{-0.1mm}{#2}^{#3}}}
\def\diff#1{\ifx#1(\operatorname{\mathit{d}}\hspace{0.25mm}#1\else\operatorname{\mathit{d}}\hspace{-0.5mm}#1\fi}
\def\pdiff#1{\ifx#1(\operatorname{\partial}\hspace{0.25mm}#1\else\operatorname{\partial}\hspace{-0.25mm}#1\fi}
\title{Mayer-Vietoris sequence \\ for \\ differentiable/diffeological spaces}
\author[Iwase]{Norio IWASE}
\email[Iwase]{iwase@math.kyushu-u.ac.jp}
\address[Iwase]{Faculty of Mathematics,
 Kyushu University,
 Motooka 744,
 Fukuoka 819-0395, Japan}
\author[Izumida]{Nobuyuki IZUMIDA}
\email[Izumida]{isla.de.salsa@gmail.com}
\address[Izumida]{Puropera Corporation, Tomigaya 1-34-6, Shibuya, Tokyo 151-0063, Japan}
\date{\today}
\keywords{differentiable, diffeology, partition of unity, differential form, de Rham theory, singular cohomology.}
\subjclass[2010]{Primary 58A40, Secondary 58A03, 58A10, 58A12, 55N10}
\begin{document}
\baselineskip18pt
%
%
\begin{abstract}
The idea of a space with smooth structure is a generalization of an idea of a manifold.
K. T. Chen introduced such a space as a differentiable space in his study of a loop space to employ the idea of iterated path integrals \cite{Chen:73,Chen:75,Chen:77,Chen:86}.
Following the pattern established by Chen, J. M. Souriau \cite{Souriau:80} introduced his version of a space with smooth structure, which is called a diffeological space. 
These notions are strong enough to include all the topological spaces.
However, if one tries to show de Rham theorem, he must encounter a difficulty to obtain a partition of unity and thus the Mayer-Vietoris exact sequence in general.
In this paper, we introduce a new version of differential forms to obtain a partition of unity, the Mayer-Vietoris exact sequence and a version of de Rham theorem in general.
In addition, if we restrict ourselves to consider only CW complexes, we obtain de Rham theorem for a genuine de Rham complex, and hence the genuine de Rham cohomology coincides with the ordinary cohomology for a CW complex.
\end{abstract}
%
%
\allowdisplaybreaks
\maketitle
%
%

In this paper, we deal with both differentiable and diffeological spaces.
A differentiable space is introduced by K. T. Chen \cite{Chen:86} and a diffeological space is introduced by J. M. Souriau \cite{Souriau:80}.
Both of them are developed with an idea of a \textit{plot} -- a map from a \textit{domain}.

Let  $n \geqq 0$.
A non-void open set in $\real^{n}$ is called an \textit{open $n$-domain} or simply an \textit{open domain} and a convex set with non-void interior in $\real^{n}$ is called a \textit{convex $n$-domain} or simply a \textit{convex domain}.
We reserve the word `smooth' for `differentiable infinitely many times' in the ordinary sense.
More precisely, a map from an open or convex domain $A$ to an euclidean space is smooth on $A$, if it is smooth on $\Int{A}$ in the ordinary sense and all derivatives extend continuously and uniquely to $A$ (see A. Kriegl and P. W. Michor \cite{KM:14}).

\section{Differentiable/diffeological spaces}

Let us recall a concrete site given by Chen \cite{Chen:86} (see J. C. Baes and A. E. Hoffnung \cite{BH:11}).

\begin{defn}\label{defn:convex}
Let $\convex$ be the category of convex domains and smooth maps between them.
Then $\convex$ is a concrete site with Chen's coverage: a covering family on a convex domain is an open covering by convex domains. 
\end{defn}

On the other hand, a concrete site given by Souriau \cite{Souriau:80} is as follows.

\begin{defn}\label{defn:domain}
Let $\open$ be the category of open domains and smooth maps between them.
Then $\open$ is a concrete site with the usual coverage: a covering family on an open domain is an open covering by open domains.
\end{defn}

Let $\sets$ be the category of sets. 
A differentiable or diffeological space is as follows.

\begin{defn}[Differentiable space]\label{defn:chen-structure}
A differentiable space is a pair $(X,\mathcal{C}^{X})$ of a set $X$ and a contravariant functor $\mathcal{C}^{X} : \convex \to \sets$ such that 
\begin{enumerate}
\vspace{.5ex}
\item[(C0)]
For any $A \in \Object{\convex}$, $\mathcal{C}^{X}(A) \subset \Morphism{\sets}(A,X)$.
\vspace{.5ex}
\item[(C1)]
For any $x \in X$ and any $A \in \Object{\convex}$, $\mathcal{C}^{X}(A) \ni c_{x}$ the constant map.
\vspace{.5ex}
\item[(C2)]
Let $A \in \Object{\convex}$ with an open covering $A = \underset{\alpha\in\Lambda}{\cup} {\Int_{A}{B_{\alpha}}}, B_{\alpha} \in \Object{\convex}$.
If $P \in \Morphism{\sets}(A,X)$ satisfies that $P\vert_{B_{\alpha}} \in \mathcal{C}^{X}(B_{\alpha}))$ for all $\alpha\in\Lambda$, then $P \in \mathcal{C}^{X}(A)$.
\vspace{.5ex}
\item[(C3)]
For any $A, B \in \Object{\convex}$ and any $f \in \Morphism{\convex}(B,A)$, $\mathcal{C}^{X}(f) = f^{*} : \mathcal{C}^{X}(A) \to \mathcal{C}^{X}(B)$ is given by $f^{*}(P) = P{\circ}f \in \mathcal{C}^{X}(A)$ for any $P \in \mathcal{C}^{X}(A)$.
\end{enumerate}
\end{defn}
\begin{defn}[Diffeological space]\label{defn:souriau-structure}
A diffeological space is a pair $(X,\mathcal{D}^{X})$ of a set $X$ and a contravariant functor $\mathcal{D}^{X} : \open\to \sets$ such that 
\begin{enumerate}
\vspace{.5ex}
\item[(D0)]\label{defn:diffeology'-0}
For any $U \in \Object{\open}$, $\mathcal{D}^{X}(U) \subset \Map(U,X)$.
\vspace{.5ex}
\item[(D1)]\label{defn:diffeology'-1}
For any $x \in X$ and any $U \in \Object{\open}$, $\mathcal{D}^{X}(U) \ni c_{x}$ the constant map.
\vspace{.5ex}
\item[(D2)]\label{defn:diffeology-2}
Let $U \in \Object{\open}$ with an open covering $U = \underset{\alpha\in\Lambda}\cup {V_{\alpha}}, \ V_{\alpha} \in \Object{\open}$.
If $P \in \Morphism{\sets}(U,X)$ satisfies that $P|_{V_{\alpha}} \in \mathcal{D}^{X}(V_{\alpha})$ for all $\alpha \in \Lambda$, then $P \in \mathcal{D}^{X}(U)$.
\vspace{.5ex}
\item[(D3)]\label{defn:diffeology-3}
For any $U, V \in \Object{\open}$ and any $f \in \Morphism{\open}(V,U)$, $\mathcal{D}^{X}(f) = f^{*} : \mathcal{D}^{X}(V) \to \mathcal{D}^{X}(U)$ is given by $f^{*}(P)=P{\circ}f \in \mathcal{D}^{X}(V)$ for any $P \in \mathcal{D}^{X}(U)$.
\end{enumerate}
\end{defn}

From now on, $\mathcal{E}^{X} : \domain \to \sets$ stands for either $\mathcal{C}^{X} : \convex \to \sets$ or $\mathcal{D}^{X} : \open \to \sets$ to discuss about a differentiable space and a diffeological space simultaneously. 
\begin{defn}
A subset $O \subset X$ is open if, for any $P \in \mathcal{E}^{X}$\, ($\mathcal{E}=\mathcal{C}$ or $\mathcal{D}$), $P^{-1}(O)$ is open in $\Dom{P}$.
When any compact subset of $X$ is closed, we say $X$ is `weakly-separated'.
\end{defn}

\begin{defn}\label{defn:smooth-structure-1}
Let $(X,\mathcal{E}^{X})$ and $(Y,\mathcal{E}^{Y})$ be differentiable/diffeological spaces, $\mathcal{E}=\mathcal{C}$ or $\mathcal{D}$.
A map $f : X \to Y$ is differentiable, if there exists a natural transformation of contravariant functors $\mathcal{E}^{f} : \mathcal{E}^{X} \to \mathcal{E}^{Y}$ such that $\mathcal{E}^{f}(P)=f{\circ}P$.
The set of differentiable maps between $X$ and $Y$ is denoted by $C_{\mathcal{E}}^{\infty}(X,Y)$ or simply by $C^{\infty}(X,Y)$.
If further, $f$ is invertible with a differentiable inverse map, $f$ is said to be a diffeomorphism.
\end{defn}
Let us summarize the minimum notions from \cite{Chen:73,Chen:75,Chen:77,Chen:86,Souriau:80,BH:11,Wu:12,Iglesias:13,Haraguchi:14,Izumida:14} to build up de Rham theory in the category of differentiable or diffeological spaces as follows.

\begin{defn}[External algebra]
Let $T^{*}_{n}=\Hom(\real^{n},\real)=\overset{n}{\underset{i=1}{\oplus}}\real\diff{x_{i}}$, where $\{\diff{x_{i}}\}_{1 \leqq i \leqq n}$ is the dual basis to the standard basis $\{e_{i}\}_{1 \leqq i \leqq n}$ of $\real^{n}$.
We denote by $\wedge^{*}(T^{*}_{n})$ the exterior (graded) algebra on $\{\diff{x_{i}}\}$, where each $\diff{x_{i}}$ is of dimension $1$.
In particular, we have $\wedge^{0}(T^{*}_{n})\cong \wedge^{*}(T^{*}_{0})\cong\real$, $\wedge^{p}(T^{*}_{n})=0$ if $p\!<\!0$ and $\wedge^{p}(T^{*}_{n})\cong\wedge^{n-p}(T^{*}_{n})$ for any $p \in \integral$.
\end{defn}

The external algebra fits in with our categorical context as the following form.
\begin{defn}
\label{defn:souriau-structure-2}
A contravariant functor $\wedge^{p} : \domain \to \sets$ is given as follows:
\begin{enumerate}
\item
$\wedge^{p}(A)=\Morphism{\domain}(A,\wedge^{p}(T^{*}_{n}))$, for any convex $n$-domain $A$, 
\item
For a smooth map $f : B \to A$ in $\domain$, $\wedge^{p}(f)\!=\!f^{*} : \wedge^{p}(A) \to \wedge^{p}(B)$ is defined, for any $\omega=\underset{i_{1}{<}{\cdots}{<}i_{p}}{\textstyle\sum}a_{i_{1},{\cdots}\,,i_{p}}(\mathbold{x})$ $\diff{x_{i_{1}}}\wedge\cdots\wedge\diff{x_{i_{p}}} \in \wedge^{p}(A)$, as 
\par\vskip.5ex\noindent\hfil$
\begin{array}{l}
f^{*}(\omega) = \displaystyle\underset{j_{1}<{\cdots}<j_{p}}{\textstyle\sum}b_{j_{1},{\cdots},j_{p}}(\mathbold{y}){\cdot}\diff{y_{j_{1}}}\wedge\cdots\wedge\diff{y_{j_{p}}}, \ \mathbold{y} \in V,
\\[1.5ex] \,
b_{j_{1},{\cdots},j_{p}}(\mathbold{y}) =\displaystyle\!\!\! \underset{i_{1}<{\cdots}<i_{p}}{\textstyle\sum}a_{i_{1},{\cdots},i_{p}}(f(\mathbold{y})){\cdot}\pder{(x_{i_{1}},{\cdots},x_{i_{p}})}by{(y_{j_{1}},{\cdots},y_{j_{p}})},
\end{array}$\vspace{-.5ex}
\end{enumerate}
where $\displaystyle\pder{(x_{i_{1}},{\cdots},x_{i_{p}})}by{(y_{j_{1}},{\cdots},y_{j_{p}})}$ denotes the Jacobian determinant.
\end{defn}
\begin{defn}\label{defn:souriau-structure-3}
A natural transformation $\diff : \wedge^{p} \to \wedge^{p+1}$ is given as follows: for any domain $A$, $\diff{} : \wedge^{p}(A) \to \wedge^{p+1}(A)$ is defined, for any $\eta=a(\mathbold{x})\diff{x_{i_{1}}}{\wedge}\cdots{\wedge}\diff{x_{i_{p}}} \in \wedge^{p}(A)$, as 
\par\vskip.5ex\noindent\hfil$\displaystyle
\diff{\eta} = \underset{i}{\textstyle\sum}\,\pder{a_{i_{1},\cdots,i_{p}}}by{x_{i}}(\mathbold{x})\diff{x_{i}}{\wedge}\diff{x_{i_{1}}}{\wedge}\cdots{\wedge}\diff{x_{i_{p}}}.
$\hfil\par\vskip.5ex\noindent
Then the naturality is obtained using a strait-forward computation.
\end{defn}
A differential form is given in this context as follows.
\begin{defn}\label{defn:souriau-structure-4}
Let $(X,\mathcal{E}^{X})$ be a differentiable or diffeological space, $\mathcal{E}=\mathcal{C}$ or $\mathcal{D}$.
\begin{description}
\vskip.5ex
\item[(general)]
A differential $p$-form on 
$X$ is a natural transformation $\omega : \mathcal{E}^{X}$ $\to$ $\wedge^{p}$ given by $\{\omega_{A} : \mathcal{E}^{X}(A) \!\to\!\wedge^{p}(A)\,;\,A \!\in\! \Object{\domain}\}$ of contravariant functors $\mathcal{E}^{X},\,\wedge^{p} : \domain \to \sets$, in other words, $\omega$ satisfies $f^{*}(\omega_{B}(P))=f^{*}{\circ}\omega_{B}(P)=\omega_{A}{\circ}f^{*}(P)=\omega_{A}(P{\circ}f)$ for any map $f : A \to B$ in $\domain$ and a plot $P \in \mathcal{E}^{X}(B)$.
The set of differential $p$-forms on $X$ is denoted by $\Omega_{\mathcal{E}}^{p}(X)$ or simply by $\Omega^{p}(X)$.
We also denote $\Omega_{\mathcal{E}}^{\ast}(X) = \underset{p}{\oplus}\, \Omega_{\mathcal{E}}^{p}(X)$ or by $\Omega^{\ast}(X) = \underset{p}{\oplus}\, \Omega^{p}(X)$.
\item[(with compact support)]
A differential $p$-form with compact support on $X$ is a natural transformation $\omega = : \mathcal{E}^{X} \to \wedge^{p}(-)$ with a compact subset $K_{\omega} \subset X$ such that, for any $A \!\in\! \Object{\domain}$ and $P \in \mathcal{E}^{X}$, we have $\Supp{\omega_{A}(P)} \subset P^{-1}(K_{\omega})$.
The set of differential $p$-forms with compact support on $X$ is denoted by $\Omega_{\mathcal{E}_{c}}^{p}(X)$ or simply by $\Omega_{c}^{p}(X)$.
We also denote $\Omega_{\mathcal{E}_{c}}^{\ast}(X) = \underset{p}{\oplus}\, \Omega_{\mathcal{E}_{c}}^{p}(X)$ or $\Omega_{c}^{\ast}(X) = \underset{p}{\oplus}\, \Omega_{c}^{p}(X)$.
\end{description}
\end{defn}
\begin{expl}
We have $\Omega^{\ast}(\{\ast\})\cong\real$ and $\Omega_{c}^{\ast}(\{\ast\})\cong\real$.
\end{expl}
\begin{defn}[External derivative]\label{defn:souriau-structure-5}
The external derivative of a differential $p$-form $\omega$ on a differentiable/diffeological space $X$ is a differential $p{+}1$-form $\diff\omega$ given by $(\diff\omega)_{A}=\diff{\circ}\omega_{A}$ for any $A \in \Object{\domain}$. 
If, further we assume $\omega \in \Omega_{c}^{p}(X)$, we clearly have $\diff\omega \in \Omega_{c}^{p+1}(X)$.
Thus the external derivative induces endomorphisms of $\Omega^{\ast}(X)$ and $\Omega_{c}^{\ast}(X)$.
\end{defn}

The category of differentiable or diffeological spaces and differentiable maps is denoted by $\differentiable$ or $\diffeological$, respectvely.
By \cite{Souriau:80}, \cite{Chen:86} and \cite{BH:11}, we know $\differentiable$ and $\diffeological$ are cartesian closed, complete and cocomplete.

\begin{defn}\label{defn:chen-induced-map}
Let $f : (X,\mathcal{E}^{X}) \to (Y,\mathcal{E}^{Y})$ be a differentiable map, $\mathcal{E}=\mathcal{C}$ or $\mathcal{D}$.
\begin{enumerate}
\item
We obtain a homomorphism $f^{\sharp} : \Omega^{p}(Y) \to \Omega^{p}(X)$:
let $\omega \in \Omega^{p}(Y)$. Then 
$(f^{\sharp}\omega)_{A}(P)=\omega_{A}(f{\circ}P) \  \text{for any \ $P \in \mathcal{E}^{X}(A)$ and $A \in \Object{\domain}$.}
$\vspace{.5ex}
\item
If a differentiable map $f$ is proper, then we have $f^{\sharp}(\Omega^{p}_{c}(Y)) \subset \Omega^{p}_{c}(X)$ by taking $K_{f^{\sharp}\omega}=f^{-1}(K_{\omega})$ \ for any $\omega \in \Omega^{p}_{c}(Y)$.
\end{enumerate}
\end{defn}

\begin{defn}\label{defn:chen-induced-map-1}
For an inclusion $j : U \hookrightarrow X$ of an open set $U$ into a weakly-separated differentiable/diffeological space $X$, a homomorphism $j_{\sharp} : \Omega^{p}_{c}(U) \to \Omega^{p}_{c}(X)$ is defined as follows: for any $\omega \in \Omega^{p}_{c}(U)$, $j_{\sharp}\omega \in \Omega^{p}_{c}(X)$ is given, for $n$-domain $B$ and $Q \in \mathcal{E}^{X}(B)$, by
\par\vskip1.5ex\noindent
\hfil$
\left\{\!\!\begin{array}{l}
(j_{\sharp}\omega)_{B}(Q)\vert_{A} = \omega_{A}(Q\vert_{A}), \quad \text{if \ $A$ is an open $n$-domain in $Q^{-1}(U)$,}
\\[1ex]
(j_{\sharp}\omega)_{B}(Q)\vert_{A} = 0,
\quad \text{if \ $A$ is an open $n$-domain in $B \!\smallsetminus\! Q^{-1}(K_{\omega})$} 
\end{array}\right.
$\hfil
\par\vskip1.5ex\noindent
with $K_{j_{\sharp}\omega} = K_{\omega} \subset U \subset X$. Here, $\{Q^{-1}(U),B \!\smallsetminus\! Q^{-1}(K_{\omega})\}$ is an open covering of $B$.
\end{defn}
\begin{rem}
In Definition \ref{defn:chen-induced-map-1}, the map $j_{\sharp}$ induced from an inclusion $j : U \hookrightarrow X$ satisfies that $(j_{\sharp}\omega)_{B}(j{\circ}Q) = \omega_{B}(Q)$ for any $B \in \Object{\domain}$ and $Q \in \mathcal{E}^{U}(B)$.
\end{rem}

\begin{prop}
There is an isomorphism $\Phi : \Omega^{0}(X) \cong C^{\infty}(X,\real)$ such that $\Phi(\omega){\circ}f = \Phi(f^{\sharp}(\omega))$ for any $\omega \in \Omega^{0}(X)$ and $f \in C^{\infty}(Y,X)$.
\end{prop}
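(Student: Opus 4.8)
The plan is to construct $\Phi$ and its inverse explicitly, reading off from a $0$-form the function it induces on points by evaluation at the one-point domain $\real^{0}$ (which is an object of both $\convex$ and $\open$). The key preliminary observation is that a $0$-form is scalar, so $\wedge^{0}(A)=\Morphism{\domain}(A,\wedge^{0}(T^{*}_{n}))=C^{\infty}(A,\real)$, and for $p=0$ the Jacobian determinant of Definition \ref{defn:souriau-structure-2} is the empty determinant $1$, whence $\wedge^{0}(f)$ is precomposition with $f$. Thus the functor $\wedge^{0}$ coincides with the structure functor $\mathcal{E}^{\real}$ of $\real$ with its standard smooth structure ($\mathcal{E}^{\real}(A)=C^{\infty}(A,\real)$), so that $\Omega^{0}(X)$ is the set of natural transformations $\mathcal{E}^{X}\to\mathcal{E}^{\real}$, inside which $C^{\infty}(X,\real)$ sits as the subset of those of the special form $P\mapsto g{\circ}P$; the assertion is that this subset is everything.

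First I would define $\Phi(\omega) : X\to\real$ by $\Phi(\omega)(x)=\omega_{\real^{0}}(c_{x})\in\wedge^{0}(\real^{0})=\real$, which makes sense because $c_{x}\in\mathcal{E}^{X}(\real^{0})$ by (C1)/(D1). The one computation that does the work: given a plot $P : A\to X$ and a point $a\in A$, precompose with the smooth map $\iota_{a} : \real^{0}\to A,\ \ast\mapsto a$; naturality of $\omega$ yields $\omega_{\real^{0}}(P{\circ}\iota_{a})=\iota_{a}^{*}\,\omega_{A}(P)=\omega_{A}(P)(a)$, while $P{\circ}\iota_{a}=c_{P(a)}$ yields $\omega_{\real^{0}}(P{\circ}\iota_{a})=\Phi(\omega)(P(a))$. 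Hence $\Phi(\omega){\circ}P=\omega_{A}(P)\in\wedge^{0}(A)=\mathcal{E}^{\real}(A)$ for every plot $P$, and since $P\mapsto\Phi(\omega){\circ}P$ is automatically natural in $A$, Definition \ref{defn:smooth-structure-1} yields $\Phi(\omega)\in C^{\infty}(X,\real)$.

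Next I would define $\Psi(g)_{A}(P)=g{\circ}P$; functoriality of plot-pullback makes this a natural transformation $\mathcal{E}^{X}\to\wedge^{0}$, so $\Psi(g)\in\Omega^{0}(X)$. Then $\Psi{\circ}\Phi=\id$ since $\Psi(\Phi(\omega))_{A}(P)=\Phi(\omega){\circ}P=\omega_{A}(P)$ by the identity just displayed, and $\Phi{\circ}\Psi=\id$ since $\Phi(\Psi(g))(x)=(g{\circ}c_{x})(\ast)=g(x)$. Both maps are patently $\real$-linear (indeed $\real$-algebra homomorphisms for the pointwise products), so $\Phi$ is an isomorphism. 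For the last clause, given $f\in C^{\infty}(Y,X)$ and $y\in Y$, the identity $f{\circ}c_{y}=c_{f(y)}$ together with Definition \ref{defn:chen-induced-map} gives $\Phi(f^{\sharp}\omega)(y)=(f^{\sharp}\omega)_{\real^{0}}(c_{y})=\omega_{\real^{0}}(f{\circ}c_{y})=\Phi(\omega)(f(y))$, i.e. $\Phi(\omega){\circ}f=\Phi(f^{\sharp}(\omega))$.

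I expect no serious obstacle here: the whole argument is a Yoneda-type bookkeeping built on the identification $\wedge^{0}=\mathcal{E}^{\real}$. The only step that genuinely needs attention is checking that the pointwise-defined $\Phi(\omega)$ is differentiable, and that is exactly the identity $\Phi(\omega){\circ}P=\omega_{A}(P)$ obtained from naturality of $\omega$ at the one-point domain $\real^{0}$.
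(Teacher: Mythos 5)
Your proposal is correct and follows essentially the same route as the paper: both define $\Phi(\omega)(x)=\omega_{\{\ast\}}(c_{x})$, use naturality along the constant map $\{\ast\}\to A$ at a point to get the key identity $\omega_{A}(P)=\Phi(\omega){\circ}P$ (which gives differentiability of $\Phi(\omega)$ and injectivity of $\Phi$), and realize surjectivity via $g\mapsto(P\mapsto g{\circ}P)$ — your explicit inverse $\Psi$ is exactly the paper's epimorphism step. The only cosmetic difference is your upfront identification $\wedge^{0}=\mathcal{E}^{\real}$, which the paper leaves implicit.
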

\begin{proof}
Firstly, we define a homomorphism $\Phi : \Omega^{0}(X) \to \Morphism{\sets}(X,\real)$ by $\Phi(\omega)(x)=\omega_{\{\ast\}}(c_{x})(\ast)$ $\in$ $\real$ for any $\omega \in \Omega^{0}(X)$ and $x \in X$.
By definition, $\Phi$ clearly is a homomorphism.

Secondly, we show $\Im{\Phi} \subset C^{\infty}(X,\real)$.
For any $n$-domain $A$ and $P \in \mathcal{E}^{X}(A)$, we have $\omega_{A}(P) : A \to \wedge^{0}(T^{*}_{n}) = \real$.
Hence for any $\mathbold{x} \in A$, we have $P{\circ}c_{\mathbold{x}}=c_{x} \in \mathcal{E}^{X}(\{\ast\})$ where $x=P(\mathbold{x}) \in X$, and hence we have $\omega_{A}(P)(\mathbold{x})=\omega_{A}(P){\circ}c_{\mathbold{x}}(\ast)=\omega_{\{\ast\}}(P{\circ}c_{\mathbold{x}})(\ast)=\omega_{\{\ast\}}(c_{x})(\ast)=\Phi(\omega)(x)=\Phi(\omega){\circ}P(\mathbold{x})$, $\mathbold{x} \in A$.
Thus we have $\omega_{A}(P)=\Phi(\omega){\circ}P$ for any $A \in \Object{\domain}$ and $P \in \mathcal{E}^{X}(A)$, and hence $\Phi(\omega) : X \to \real$ is a differentiable map.
Moreover, for any differentiable map $f : Y \to X$, we have $\Phi(f^{\sharp}\omega)(x) = (f^{\sharp}\omega)_{\{\ast\}}(c_{x})(\ast) = \omega_{\{\ast\}}(f{\circ}c_{x})(\ast) = \omega_{\{\ast\}}(c_{f(x)})(\ast) = \Phi(\omega){\circ}f(x)$, and hence we obtain $\Phi(f^{\sharp}\omega) = \Phi(\omega){\circ}f$.

Thirdly, by the formula $\omega_{A}(P)=\Phi(\omega){\circ}P$ for any $A \in \Object{\domain}$ and $P \in \mathcal{E}^{X}(A)$, $\omega$ is completely determined by $\Phi(\omega)$, and hence $\Phi$ is a monomorphism.

Finally, for any differentiable map $f : X \to \real$, we have a $0$-form $\omega$ by $\omega_{A}(P)=f{\circ}P$ for any $A \in \Object{\domain}$ and $P \in \mathcal{E}^{X}(A)$, which also implies $\Phi(\omega)=f$.
Thus $\Phi$ is an epimorphism, and it completes the proof of the proposition.
\end{proof}

\begin{defn}
Let $X=(X,\mathcal{E})$ be  a differentiable/diffeological space, $\mathcal{E}=\mathcal{C}$ or $\mathcal{D}$.
\begin{description}
\item[de Rham cohomology]
$H_{\mathcal{E}}^{p}(X) = \displaystyle\frac{Z_{\mathcal{E}}^{p}(X)}{B_{\mathcal{E}}^{p}(X)}$,
\par\vskip.5ex\noindent
where $Z_{\mathcal{E}}^{p}(X) = \Ker{d}\cap \Omega_{\mathcal{E}}^{p}(X)$
and $B_{\mathcal{E}}^{p}(X) = \diff (\Omega_{\mathcal{E}}^{p}(X))$.
\vskip1ex
\item[de Rham cohomology with compact support] 
$H_{\mathcal{E}_{c}}^{p}(X) = \displaystyle\frac{Z_{\mathcal{E}_{c}}^{p}(X)}{B_{\mathcal{E}_{c}}^{p}(X)}$,
\par\vskip.5ex\noindent
where $Z_{\mathcal{E}_{c}}^{p}(X) = \Ker{d}\cap \Omega_{\mathcal{E}_{c}}^{p}(X)$
and $B_{\mathcal{E}_{c}}^{p}(X) = \diff (\Omega_{\mathcal{E}_{c}}^{p}(X))$.
\end{description}
From now on, we often abbreviate as $H^{p}(X) = H_{\mathcal{E}}^{p}(X)$, $H_{c}^{p}(X) = H_{\mathcal{E}_{c}}^{p}(X)$ and so on.
\end{defn}
\begin{rem}\label{rem:manifold-C}
We have $H_{\mathcal{E}_{c}}^{p}(M) \cong H_{dR}^{p}(M)$ and $H_{\mathcal{E}_{c}}^{p}(M) \cong H_{dR_{c}}^{p}(M)$ for a manifold $M$, where we denote by $H_{dR}^{p}(M)$ ($H_{dR_{c}}^{p}(M)$) the de Rham cohomology (with compact support).
\end{rem}

\begin{prop}
Let $(X,\mathcal{E}^{X})$ and $(Y,\mathcal{E}^{Y})$ be differentiable/diffeological spaces.
\begin{enumerate}
\item
For a differentiable map $f : X \to Y$, the homomorphism $f^{\sharp} : \Omega^{\ast}(Y) \to \Omega^{\ast}(X)$ induces a homomorphism $f^{*} : H^{\ast}(Y) \to H^{\ast}(X)$.
\vskip.5ex
\item
If a differentiable map $f : X \to Y$ is proper, then the homomorphism $f^{\sharp} : \Omega_{c}^{\ast}(Y) \to \Omega_{c}^{\ast}(X)$ induces a homomorphism $f^{*} : H^{\ast}_{c}(Y) \to H^{\ast}_{c}(X)$.
\end{enumerate}
\end{prop}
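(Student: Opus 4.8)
The plan is to show that $f^{\sharp}$ is a cochain map, i.e.\ that it commutes with the external derivative $d$; granting this, both assertions are immediate from elementary homological algebra, since a cochain map always induces a homomorphism on cohomology. So fix $\omega \in \Omega^{p}(Y)$, a domain $A \in \Object{\domain}$ and a plot $P \in \mathcal{E}^{X}(A)$. As $f$ is differentiable, $f{\circ}P = \mathcal{E}^{f}(P) \in \mathcal{E}^{Y}(A)$, so $\omega_{A}(f{\circ}P)$ is defined; unwinding Definition \ref{defn:chen-induced-map}(1) and the pointwise definition of the external derivative on forms (Definition \ref{defn:souriau-structure-5}), we get
\[
\bigl(\diff(f^{\sharp}\omega)\bigr)_{A}(P) = \diff\bigl((f^{\sharp}\omega)_{A}(P)\bigr) = \diff\bigl(\omega_{A}(f{\circ}P)\bigr) = (\diff\omega)_{A}(f{\circ}P) = \bigl(f^{\sharp}(\diff\omega)\bigr)_{A}(P).
\]
Since $A$ and $P$ were arbitrary, $\diff(f^{\sharp}\omega) = f^{\sharp}(\diff\omega)$ in $\Omega^{p+1}(X)$; here we use that $f^{\sharp}\omega$ is genuinely a natural transformation $\mathcal{E}^{X} \to \wedge^{p}$, which is already part of the content of Definition \ref{defn:chen-induced-map}.

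It follows that $f^{\sharp}$ carries $Z^{p}_{\mathcal{E}}(Y) = \Ker{d}\cap\Omega^{p}(Y)$ into $Z^{p}_{\mathcal{E}}(X)$ --- if $\diff\omega = 0$ then $\diff(f^{\sharp}\omega) = f^{\sharp}(\diff\omega) = 0$ --- and $B^{p}_{\mathcal{E}}(Y)$ into $B^{p}_{\mathcal{E}}(X)$, since $f^{\sharp}(\diff\eta) = \diff(f^{\sharp}\eta)$. Passing to the quotients $H^{p}_{\mathcal{E}} = Z^{p}_{\mathcal{E}}/B^{p}_{\mathcal{E}}$ thus yields a well-defined homomorphism $f^{*} : H^{\ast}(Y) \to H^{\ast}(X)$ with $f^{*}[\omega] = [f^{\sharp}\omega]$, which proves (1). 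For (2), Definition \ref{defn:chen-induced-map}(2) tells us that properness of $f$ gives $f^{\sharp}(\Omega^{p}_{c}(Y)) \subset \Omega^{p}_{c}(X)$, taking $K_{f^{\sharp}\omega} = f^{-1}(K_{\omega})$, while Definition \ref{defn:souriau-structure-5} ensures that the external derivative preserves compact supports. The displayed computation then applies verbatim to compactly supported forms, so $f^{\sharp} : \Omega^{\ast}_{c}(Y) \to \Omega^{\ast}_{c}(X)$ is again a cochain map and descends to $f^{*} : H^{\ast}_{c}(Y) \to H^{\ast}_{c}(X)$.

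There is essentially no serious obstacle: the entire content is the one-line identity $d{\circ}f^{\sharp} = f^{\sharp}{\circ}d$, which falls out of the definitions because both $f^{\sharp}$ and the external derivative are defined plot-by-plot and pointwise on forms. The only two small points worth noting are that $f{\circ}P$ is a legitimate plot of $Y$ --- this is exactly the differentiability of $f$, and is what lets one even write $\omega_{A}(f{\circ}P)$ --- and, in the compactly supported case, that the support bookkeeping of Definition \ref{defn:chen-induced-map}(2) is compatible with differentiation; both are already in hand. The same formulas additionally give functoriality, $(g{\circ}f)^{*} = f^{*}{\circ}g^{*}$ and $\id^{*} = \id$, though this is not part of the stated proposition.
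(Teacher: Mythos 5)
Your proof is correct, and it is exactly the routine verification the paper has in mind: the paper states this proposition without proof, treating the identity $\diff\circ f^{\sharp}=f^{\sharp}\circ\diff$ (which follows immediately since both are defined plot-by-plot via $(f^{\sharp}\omega)_{A}(P)=\omega_{A}(f\circ P)$ and $(\diff\omega)_{A}=\diff\circ\omega_{A}$) and the support bookkeeping $K_{f^{\sharp}\omega}=f^{-1}(K_{\omega})$ as immediate from Definitions \ref{defn:chen-induced-map} and \ref{defn:souriau-structure-5}. Your write-up supplies precisely that verification, so nothing further is needed.
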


\begin{thm}
The de Rham cohomologies determines contravariant functors $H_{\mathcal{C}}^{\ast} : \differentiable \to \gradedalgebra$ and $H_{\mathcal{D}}^{\ast} : \diffeological \to \gradedalgebra$. 
\end{thm}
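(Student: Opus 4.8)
The plan is to establish the statement in three steps: first, to equip each $\Omega_{\mathcal{E}}^{\ast}(X)$ with the structure of a graded-commutative unital $\real$-algebra on which $d$ acts as a derivation, so that $H_{\mathcal{E}}^{\ast}(X)$ inherits a graded-algebra structure; second, to check that a differentiable map $f : X \to Y$ induces an algebra homomorphism $f^{\sharp}$ commuting with $d$, hence a graded-algebra homomorphism $f^{\ast}$ on cohomology; and third, to verify the functoriality identities $(\id_{X})^{\ast} = \id$ and $(g{\circ}f)^{\ast} = f^{\ast}{\circ}g^{\ast}$. The map-level identities will then descend to cohomology for free.

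For the first step, I would define, for $\omega \in \Omega^{p}(X)$ and $\eta \in \Omega^{q}(X)$, a differential $(p{+}q)$-form $\omega\wedge\eta$ by $(\omega\wedge\eta)_{A}(P) = \omega_{A}(P)\wedge\eta_{A}(P)$ for every $A \in \Object{\domain}$ and every plot $P \in \mathcal{E}^{X}(A)$, the product on the right being the exterior product of $\wedge^{\ast}(T^{\ast}_{n})$ taken pointwise on $A$. Associativity and graded-commutativity are then inherited plot by plot from $\wedge^{\ast}(T^{\ast}_{n})$; the unit is the $0$-form corresponding under the isomorphism $\Phi : \Omega^{0}(X) \cong C^{\infty}(X,\real)$ to the constant function $1$; and the graded Leibniz rule $\diff(\omega\wedge\eta) = \diff\omega\wedge\eta + (-1)^{p}\omega\wedge\diff\eta$ follows by evaluating Definition \ref{defn:souriau-structure-3} on each plot. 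The one point that genuinely requires an argument is the naturality of $\omega\wedge\eta$: for a morphism $f : B \to A$ of $\domain$ one needs that the pullback $f^{\ast} : \wedge^{\ast}(A) \to \wedge^{\ast}(B)$ of Definition \ref{defn:souriau-structure-2} is a homomorphism of exterior algebras, which is the classical multiplicativity of pullback of forms and is proved by a direct computation with the Jacobian determinants appearing there; granting this, naturality of $\omega\wedge\eta$ is immediate from that of $\omega$ and of $\eta$. Since $d$ is a derivation, $Z_{\mathcal{E}}^{\ast}(X)$ is a graded subalgebra of $\Omega_{\mathcal{E}}^{\ast}(X)$ and $B_{\mathcal{E}}^{\ast}(X)$ is a graded ideal of $Z_{\mathcal{E}}^{\ast}(X)$ (if $\gamma \in Z_{\mathcal{E}}^{\ast}(X)$ then $\diff\beta\wedge\gamma = \diff(\beta{\wedge}\gamma) \in B_{\mathcal{E}}^{\ast}(X)$), and therefore the quotient $H_{\mathcal{E}}^{\ast}(X)$ is a graded $\real$-algebra.

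For the second step, a direct computation from Definition \ref{defn:chen-induced-map} gives $(f^{\sharp}(\omega\wedge\eta))_{A}(P) = (\omega\wedge\eta)_{A}(f{\circ}P) = \omega_{A}(f{\circ}P)\wedge\eta_{A}(f{\circ}P) = (f^{\sharp}\omega)_{A}(P)\wedge(f^{\sharp}\eta)_{A}(P)$, so $f^{\sharp}$ is multiplicative; it visibly preserves the unit, and $f^{\sharp}{\circ}d = d{\circ}f^{\sharp}$ is immediate from Definitions \ref{defn:chen-induced-map} and \ref{defn:souriau-structure-5} (indeed this is what makes the $f^{\ast}$ of the preceding Proposition well defined). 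Hence $f^{\sharp}$ carries $Z^{\ast}(Y)$ into $Z^{\ast}(X)$ and $B^{\ast}(Y)$ into $B^{\ast}(X)$, and so descends to a graded-algebra homomorphism $f^{\ast} : H^{\ast}(Y) \to H^{\ast}(X)$.

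For the third step, $((\id_{X})^{\sharp}\omega)_{A}(P) = \omega_{A}(\id_{X}{\circ}P) = \omega_{A}(P)$ shows $(\id_{X})^{\sharp} = \id$, while for differentiable maps $f : X \to Y$ and $g : Y \to Z$ the computation $((g{\circ}f)^{\sharp}\omega)_{A}(P) = \omega_{A}(g{\circ}f{\circ}P) = (g^{\sharp}\omega)_{A}(f{\circ}P) = (f^{\sharp}(g^{\sharp}\omega))_{A}(P)$ shows $(g{\circ}f)^{\sharp} = f^{\sharp}{\circ}g^{\sharp}$; both identities descend to the quotients. Together with the first step this is precisely the assertion that $H_{\mathcal{C}}^{\ast} : \differentiable \to \gradedalgebra$ and $H_{\mathcal{D}}^{\ast} : \diffeological \to \gradedalgebra$ are contravariant functors, which completes the proof. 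I do not expect a genuine obstacle here: the only slightly nontrivial ingredient is the multiplicativity of pullback on $\wedge^{\ast}$, needed to see that the wedge product of two natural transformations is again natural, and that fact is entirely classical.
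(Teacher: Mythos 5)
Your argument is correct and is precisely the standard construction the paper relies on implicitly: the paper states this theorem without proof, and your three steps (plotwise wedge product with naturality via multiplicativity of pullback, compatibility of $f^{\sharp}$ with products and $d$, and the functoriality identities at the level of $\Omega^{\ast}$) supply exactly what is needed. No gaps.
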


\begin{prop}
Let $(X,\mathcal{E}^{X})$ be a weakly-separated differentiable/diffeological space and $U$ an open set in $X$.
Then the homomorphism $j_{\sharp} : \Omega_{c}^{\ast}(U) \to \Omega_{c}^{\ast}(X)$ induced from the canonical inclusion $j : U \hookrightarrow X$ induces a homomorphism $j_{*} : H_{c}^{*}(U) \to H_{c}^{*}(X)$.
\end{prop}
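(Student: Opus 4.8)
The plan is to reduce everything to showing that $j_{\sharp}$ is a cochain map. Definition \ref{defn:chen-induced-map-1} already supplies $j_{\sharp}$ as a homomorphism of graded real vector spaces $\Omega^{*}_{c}(U) \to \Omega^{*}_{c}(X)$, so the only point is the identity $\diff{\circ}\,j_{\sharp} = j_{\sharp}{\circ}\,\diff$ as maps $\Omega^{p}_{c}(U) \to \Omega^{p+1}_{c}(X)$; granting it, $j_{\sharp}$ sends $Z^{p}_{c}(U)$ into $Z^{p}_{c}(X)$ and $B^{p}_{c}(U)$ into $B^{p}_{c}(X)$, hence descends to $j_{*} : H^{*}_{c}(U) \to H^{*}_{c}(X)$. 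A preliminary observation is that, for $\omega \in \Omega^{p}_{c}(U)$, the form $\diff\omega$ may be given the same compact support $K_{\diff\omega} = K_{\omega}$, since $\Supp{(\diff\omega)_{A}(P)} = \Supp{\diff{(\omega_{A}(P))}} \subset \Supp{\omega_{A}(P)} \subset P^{-1}(K_{\omega})$ for every plot $P$; thus $j_{\sharp}(\diff\omega)$ is built using the same open covering $\{Q^{-1}(U),\, B \smallsetminus Q^{-1}(K_{\omega})\}$ of $B$ as $j_{\sharp}\omega$.

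Next I would verify the cochain identity locally. Fix $B \in \Object{\domain}$ and $Q \in \mathcal{E}^{X}(B)$. An element of $\wedge^{p+1}(B)$ is a smooth $\wedge^{p+1}(T^{*}_{n})$-valued function on $B$, hence is determined by its restrictions to the members of any covering of $B$ by subdomains, so it is enough to compare $(\diff{j_{\sharp}\omega})_{B}(Q)$ with $(j_{\sharp}\diff\omega)_{B}(Q)$ after restricting to an arbitrary open $n$-domain $A$ contained either in $Q^{-1}(U)$ or in $B \smallsetminus Q^{-1}(K_{\omega})$. On such an $A$, restriction along the inclusion $A \hookrightarrow B$ (a morphism of $\domain$) commutes with $\diff$ because $\diff : \wedge^{p} \to \wedge^{p+1}$ is a natural transformation, so $(\diff{j_{\sharp}\omega})_{B}(Q)\vert_{A} = \diff{\bigl((j_{\sharp}\omega)_{B}(Q)\vert_{A}\bigr)}$. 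If $A \subset Q^{-1}(U)$ then $Q\vert_{A}$ factors through $U$ as a plot $Q^{U}_{A} \in \mathcal{E}^{U}(A)$, and by the definitions of $j_{\sharp}$ and of the external derivative
\[
(\diff{j_{\sharp}\omega})_{B}(Q)\vert_{A} = \diff{\bigl(\omega_{A}(Q^{U}_{A})\bigr)} = (\diff\omega)_{A}(Q^{U}_{A}) = (j_{\sharp}\diff\omega)_{B}(Q)\vert_{A}.
\]
If $A \subset B \smallsetminus Q^{-1}(K_{\omega})$ then $(j_{\sharp}\omega)_{B}(Q)\vert_{A} = 0$, so the left side is $\diff 0 = 0$, while the right side vanishes by definition of $j_{\sharp}$ applied to $\diff\omega$, using $K_{\diff\omega} = K_{\omega}$. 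Since these two kinds of domains cover $B$, we get $(\diff{j_{\sharp}\omega})_{B}(Q) = (j_{\sharp}\diff\omega)_{B}(Q)$, and letting $B$ and $Q$ range over $\domain$ and $\mathcal{E}^{X}(B)$ gives $\diff{\circ}\,j_{\sharp} = j_{\sharp}{\circ}\,\diff$, which finishes the argument.

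The only genuinely delicate point is that a general plot $Q \in \mathcal{E}^{X}(B)$ need not factor through $U$, which is why the identity cannot be checked by a one-line naturality computation and must instead be established over the covering $\{Q^{-1}(U),\, B \smallsetminus Q^{-1}(K_{\omega})\}$; the sheaf-type gluing of smooth $\wedge^{p+1}(T^{*}_{n})$-valued functions that legitimizes this local-to-global passage is precisely the one already used to make $j_{\sharp}$ well defined in Definition \ref{defn:chen-induced-map-1}, so no new machinery is required. Everything else — linearity of $j_{\sharp}$, functoriality of restriction $Q \mapsto Q\vert_{A}$, and the descent of a cochain map to cohomology — is formal.
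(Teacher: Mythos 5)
Your argument is correct. The paper states this proposition without proof, and your verification supplies exactly the missing content: the only nontrivial point is that $j_{\sharp}$ commutes with $\diff$, and you establish this correctly by (i) observing that $K_{\diff\omega}=K_{\omega}$ is an admissible compact support for $\diff\omega$ (since $\Supp{\diff(\omega_{A}(P))}\subset\Supp{\omega_{A}(P)}$), so the same covering $\{Q^{-1}(U),\,B\smallsetminus Q^{-1}(K_{\omega})\}$ serves for both forms, and (ii) using the naturality of $\diff:\wedge^{p}\to\wedge^{p+1}$ under the inclusion $A\hookrightarrow B$ to check the identity on each member of that covering, where it reduces to the definition of $j_{\sharp}$ (or to $\diff 0=0$). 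This is the natural route, and the gluing principle you invoke is the same one that makes $j_{\sharp}$ well defined in Definition \ref{defn:chen-induced-map-1}, so nothing further is needed.
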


\begin{thm}[\cite{Chen:86}, \cite{Souriau:80}]
If two differentiable maps $f_{0}, \ f_{1} : X \to Y$ between differentiable/diffeological spaces are homotopic in $C_{\mathcal{E}}^{\infty}(X,Y)$, $\mathcal{E}=\mathcal{C}$ or $\mathcal{D}$, i.e., there is a differentiable map $f : I \to C_{\mathcal{E}}^{\infty}(X,Y)$ such that $f(t)=f_{t}$, $t = 0,1$, then we obtain
\par\vskip.5ex\noindent\hfil
$f^{*}_{0} = f^{*}_{1} : H_{\mathcal{E}}^{\ast}(Y) \to H_{\mathcal{E}}^{\ast}(X)$.
\par\vskip.5ex
\end{thm}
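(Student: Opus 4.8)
The plan is to construct an explicit smooth chain homotopy operator between $f_0^\sharp$ and $f_1^\sharp$ on the level of differential forms, in the style of the classical Poincar\'e-lemma argument adapted to the plot-wise definition of forms. Let $F : I \times X \to Y$ be the adjoint of the given differentiable map $f : I \to C^\infty_{\mathcal E}(X,Y)$, so that $F(t,-) = f_t$; cartesian closedness of $\differentiable$ (resp. $\diffeological$) guarantees that $F$ is differentiable. Let $\iota_t : X \to I \times X$ be the slice inclusion $x \mapsto (t,x)$, which is differentiable, so that $f_t = F \circ \iota_t$ and hence $f_t^\sharp = \iota_t^\sharp \circ F^\sharp$ at the cochain level. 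Thus it suffices to show that $\iota_0^\sharp$ and $\iota_1^\sharp : \Omega^\ast(I\times X) \to \Omega^\ast(X)$ are chain homotopic, and then pull back the homotopy along $F^\sharp$.

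The key step is to define an integration operator $h : \Omega^{p}(I \times X) \to \Omega^{p-1}(X)$ plot-wise. Given a plot $P \in \mathcal{E}^{X}(A)$ on an $n$-domain $A$, form the plot $\id_I \times P$ on $I \times A$ (a domain in $\real^{n+1}$ with coordinate $t$ on the $I$-factor), evaluate $\omega$ on it to get $\omega_{I\times A}(\id_I\times P) \in \wedge^p(I\times A)$, write this form as $\diff{t}\wedge\alpha + \beta$ with $\alpha,\beta$ not involving $\diff{t}$, and set $(h\omega)_A(P) := \int_0^1 \alpha(t,-)\,\diff{t}$, the fibre integral. One checks that this is natural in $A$ — the only subtlety is behaviour under maps of domains, which reduces to the change-of-variables/Fubini compatibility of the Jacobian-determinant formula in Definition \ref{defn:souriau-structure-2} restricted to maps of the form $\id_I \times g$ — so that $h\omega$ is genuinely a $(p{-}1)$-form on $X$. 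The standard computation, carried out plot-wise using Definition \ref{defn:souriau-structure-3} and the fundamental theorem of calculus in the $t$-variable, then yields the homotopy formula
\[
\diff{(h\omega)} + h(\diff\omega) = \iota_1^\sharp\omega - \iota_0^\sharp\omega
\qquad\text{for all } \omega \in \Omega^p(I\times X).
\]
Composing with $F^\sharp$ and setting $s := h \circ F^\sharp : \Omega^p(Y) \to \Omega^{p-1}(X)$ gives $\diff s + s\,\diff = f_1^\sharp - f_0^\sharp$, whence $f_0^\ast = f_1^\ast$ on $H^\ast_{\mathcal E}(X)$.

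I expect the main obstacle to be the verification that $h$ lands in $\Omega^{p-1}(X)$, i.e. the naturality of $(h\omega)_A(P)$ under an arbitrary smooth map $g : B \to A$ of domains. One must check $g^\ast\big((h\omega)_A(P)\big) = (h\omega)_B(P\circ g)$; this follows by applying naturality of $\omega$ to the domain map $\id_I \times g : I\times B \to I\times A$, decomposing the resulting form into its $\diff{t}$-component and the rest, and observing that pullback along $\id_I\times g$ commutes with the fibre integral $\int_0^1(-)\,\diff{t}$ — essentially because the Jacobian determinant of $\id_I\times g$ along the relevant index sets either equals that of $g$ (when the $t$-row/column is omitted) or carries a single factor of $1$ from the $t$-coordinate. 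For the diffeological case $\domain = \open$ no change is needed; for the differentiable case $\domain = \convex$ one must note that $I \times A$ is again a convex domain when $A$ is, so the construction stays inside the site, and the extension-of-derivatives condition on convex domains is preserved by fibre integration. The homotopy-invariance for compactly supported forms is not claimed here and is not needed.
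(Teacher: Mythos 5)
Your strategy---fibre integration over $I$ to build an explicit chain homotopy between $\iota_0^{\sharp}$ and $\iota_1^{\sharp}$ on $\Omega^{\ast}(I\times X)$, then transport along $F^{\sharp}$ where $F$ is the adjoint of $f$---is exactly the mechanism the paper itself uses to prove the cubical analogue (Lemma \ref{lem:homotopy-invariance}); for the theorem as stated the paper gives no proof, deferring to Chen and Souriau, whose arguments have the same shape. So the substance of your proof is correct, and the naturality check you isolate (pullback along $\id_I\times g$ commutes with $\int_0^1(-)\,\diff{t}$) is indeed the only real verification.

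However, your final remark about the two sites is backwards, and as written one step fails in the diffeological case. When $A$ is a \emph{convex} domain, $I\times A$ is again a convex domain with non-void interior, so $\omega_{I\times A}(\id_I\times P)$ is literally defined and "no change is needed" there. But when $A$ is an \emph{open} domain, $I\times A=[0,1]\times A$ is not an object of $\open$, so the expression $\omega_{I\times A}(\id_I\times P)$ does not parse: there is no component of the natural transformation $\omega$ at $[0,1]\times A$. The standard repair is to replace $\id_I\times P$ by the plot $(t,\mathbold{x})\mapsto(\hat\lambda(t),P(\mathbold{x}))$ of $I\times X$ defined on the open domain $\real\times A$ (or $(-\epsilon,1+\epsilon)\times A$), where $\hat\lambda:\real\to I$ is the smooth stabilizer function used later in the paper, which satisfies $\hat\lambda(0)=0$ and $\hat\lambda(1)=1$; one then integrates $t$ from $0$ to $1$ as before. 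The endpoint evaluations still return $f_0^{\sharp}$ and $f_1^{\sharp}$, the naturality and homotopy-formula computations are unchanged, and with this adjustment the argument is complete in both cases.
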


\begin{thm}
By definition, we clearly have $H^{\ast}_{\mathcal{E}}(\underset{\alpha}{\coprod}X_{\alpha}) = \underset{\alpha}{\prod}H^{\ast}_{\mathcal{E}}(X_{\alpha})$, $\mathcal{E}=\mathcal{C}$ or $\mathcal{D}$.
\end{thm}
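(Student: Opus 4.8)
The plan is to pass to the cochain level. I claim the canonical inclusions $i_{\alpha} : X_{\alpha} \hookrightarrow \coprod_{\beta}X_{\beta}$, which are differentiable maps, induce an isomorphism of cochain complexes
\[
(i_{\alpha}^{\sharp})_{\alpha}\;:\;\Omega_{\mathcal{E}}^{\ast}\Bigl(\coprod_{\alpha}X_{\alpha}\Bigr)\;\xrightarrow{\ \cong\ }\;\prod_{\alpha}\Omega_{\mathcal{E}}^{\ast}(X_{\alpha}),
\]
after which the statement follows because cohomology commutes with arbitrary products of cochain complexes of $\real$-vector spaces.

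First I would spell out the coproduct structure $\mathcal{E}^{\coprod_{\alpha}X_{\alpha}}$ ($\mathcal{E}=\mathcal{C}$ or $\mathcal{D}$), whose existence is guaranteed by cocompleteness of $\differentiable$ and $\diffeological$: its underlying set is the disjoint union, and a map $P$ from a domain $A$ is a plot precisely when $A$ admits a covering by (sub)domains on each of which $P$ equals $i_{\alpha}$ composed with a plot of $X_{\alpha}$ for some $\alpha$. The elementary observation driving everything is that for such $P$ the subsets $P^{-1}(X_{\alpha})$ form an open partition of $A=\Dom P$; moreover, in the differentiable case each convex subdomain occurring in a covering is connected, so $P$ factors through a single $i_{\alpha}$ whenever $A$ is convex, hence connected.

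Second, for a $p$-form $\omega$ on $\coprod_{\alpha}X_{\alpha}$ each $i_{\alpha}^{\sharp}\omega$ is a $p$-form on $X_{\alpha}$, which defines the map above; conversely, given $(\omega^{\alpha})_{\alpha}\in\prod_{\alpha}\Omega^{p}(X_{\alpha})$ I would set, for every domain $A$ and plot $P\in\mathcal{E}^{\coprod_{\alpha}X_{\alpha}}(A)$,
\[
\omega_{A}(P)\big|_{P^{-1}(X_{\alpha})}\;=\;\omega^{\alpha}_{P^{-1}(X_{\alpha})}\bigl(P|_{P^{-1}(X_{\alpha})}\bigr),
\]
which in the convex case involves only one summand. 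One checks that this is a well-defined natural transformation $\mathcal{E}^{\coprod_{\alpha}X_{\alpha}}\to\wedge^{p}$, using naturality of each $\omega^{\alpha}$ together with the covering axiom (C2)/(D2), and that the two assignments are mutually inverse. Because the external derivative is evaluated plotwise (Definition \ref{defn:souriau-structure-5}), both assignments commute with $d$, so the displayed map is an isomorphism of cochain complexes; it is visibly multiplicative as well, hence an isomorphism of graded differential algebras.

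Finally, for any family of cochain complexes of $\real$-vector spaces, forming cocycles, coboundaries and their quotients all commute with the exact functor $\prod_{\alpha}(-)$, so $H^{p}\bigl(\prod_{\alpha}\Omega_{\mathcal{E}}^{\ast}(X_{\alpha})\bigr)\cong\prod_{\alpha}H_{\mathcal{E}}^{p}(X_{\alpha})$; combined with the cochain isomorphism this gives the asserted identification $H_{\mathcal{E}}^{\ast}(\coprod_{\alpha}X_{\alpha})\cong\prod_{\alpha}H_{\mathcal{E}}^{\ast}(X_{\alpha})$, naturally in the $X_{\alpha}$. The one nontrivial point — the reason the result is ``clear'' only after this unwinding — is the explicit identification of the plots of the coproduct, namely that every plot is, locally on its domain, a plot of a single summand; everything else is bookkeeping.
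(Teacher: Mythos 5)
Your proof is correct, and it takes the only route available: the paper offers no argument beyond ``by definition,'' and what you have written is exactly the unwinding of that definition --- identifying the plots of the coproduct (locally, and in the convex case globally by connectedness, plots of a single summand), deducing the cochain-level isomorphism $\Omega_{\mathcal{E}}^{\ast}(\coprod_{\alpha}X_{\alpha})\cong\prod_{\alpha}\Omega_{\mathcal{E}}^{\ast}(X_{\alpha})$, and using that cohomology commutes with products of cochain complexes. Nothing is missing.
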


\begin{expl}\label{expl:one-point-set-C}
For a differentiable/diffeological space $(\{\ast\},\mathcal{E}^{*})$ with $\mathcal{E}^{*}(A) = \{c_{\ast}\}$ for any $A \in \Object{\domain}$, we have $H^{0}(X) = \Omega^{0}(X)=\real$ and $H^{p}(X) = \Omega^{p}(X)=0$ if $p\not=0$.
\end{expl}

\section{Mayer-Vietoris sequence for differentiable spaces}

\begin{defn}[partition of unity]
Let $(X,\mathcal{E}^{X})$ be a differentiable/diffeological space and \,$\mathcal{U}$ an open covering of $X$.
A set of \,$0$-forms $\mathbold{\rho}=\{\rho^{U}\,;\,U \!\in\! \mathcal{U}\}$ is called a partition of unity belonging to  \,$\mathcal{U}$, if, for any $A \in \Object{\domain}$ and $P \in \mathcal{E}^{X}(A)$, $\Supp{\rho^{U}_{A}(P)} \subset P^{-1}(U)$ and $\underset{U \in \mathcal{U}}{\sum}\, \rho^{U}_{A}(\mathbold{x}) = 1$, $\mathbold{x} \in A$.
If further there is a family $\{G_{U}\,;\,U \!\in\! \mathcal{U}\}$ of closed sets in $X$ such that, $\Supp{\rho^{U}_{A}(P)} \subset P^{-1}(G_{U})$ for any $A$ and $P$ above, then we say that $\mathbold{\rho}$ is `normal'.
\end{defn}

The above definition of a partition of unity using the notion of \,$0$-form first appeared in Izumida \cite{Izumida:14} which was essentially the same as the one in Haraguchi \cite{Haraguchi:14} using the notion of a differentiable function, since a differential $0$-form is a differentiable function, if we adopt the usual definition of \,$0$-form.
We introduce a special kind of open coverings as follows.

\begin{defn}[Nice covering]\label{defn:nice-open-covering}
Let $X$ be a differentiable space.
An open covering \,$\mathcal{U}$ of $X$ is \textit{nice}, if there is a partition of unity $\{\rho^{U}_{A} : A \to I=[0,1]\,;\,U \!\in\! \mathcal{U}\}$ belonging to \,$\mathcal{U}$, i.e., $\{\rho^{U}\}$ are differential $0$-forms with $\Supp{\rho_{A}^{U}(P)} = \Clo{(\rho_{A}^{U}(P)^{-1}(I \!\smallsetminus\! \{0\}))} \subset P^{-1}(U)$, $U \!\in\! \,\mathcal{U}$ satisfying $\underset{U \in \mathcal{U}}{\sum}\, \rho_{A}^{U}(P)(\mathbold{x}) = 1$ for any $\mathbold{x} \!\in\! A$, where $\rho_{A}^{U}(P)(\mathbold{x})\not=0$ for finitely many $U$.
\end{defn}

\begin{thm}[see \cite{Haraguchi:14} or \cite{Izumida:14}]\label{thm:nice-open-covering-1}
Let \,$\mathcal{U}=\{U_{1},U_{2}\}$ be a nice open covering of a differentiable/diffeological space $(X,\mathcal{E}^{X})$ with a partition of unity $\{\rho^{(1)},\rho^{(2)}\}$ belonging to \,$\mathcal{U}$.
Then $i_{t} : U_{1} \cap U_{2} \hookrightarrow U_{t}$ and $j_{t} : U_{t} \hookrightarrow X$, $t=1, 2$, induce homomorphisms $\psi^{\natural} : \Omega^{p}(X) \to \Omega^{p}(U_{1}) \oplus \Omega^{p}(U_{2})$ and $\phi^{\natural} : \Omega^{p}(U_{1}) \oplus \Omega^{p}(U_{2}) \to \Omega^{p}(U_{1} \cap U_{2})$ by $\psi^{\natural}(\omega)$ $=$ $i_{1}^{\sharp}\omega{\oplus}i_{2}^{\sharp}\omega$ and $\phi^{\natural}(\eta_{1}{\oplus}\eta_{2})$ $=$ $j_{1}^{\sharp}\eta_{1}\!-\!j_{2}^{\sharp}\eta_{2}$, and the following sequence is exact.
\par\vskip1.5ex\noindent\hfil
$\begin{array}{l}
H^{0}(X) \rightarrow \cdots \rightarrow H^{p}(X) \xrightarrow{\psi^{*}} H^{p}(U_{1}){\oplus}H^{p}(U_{2}) \xrightarrow{\phi^{*}} H^{p}(U_{1} \cap U_{2})
\\[1ex]\qquad\qquad\qquad
\rightarrow H^{p+1}(X) \xrightarrow{\psi^{*}} H^{p+1}(U_{1}){\oplus}H^{p+1}(U_{2}) \xrightarrow{\phi^{*}} H^{p+1}(U_{1} \cap U_{2}) \rightarrow \cdots,
\end{array}$\hfil
\par\vskip1.5ex\noindent
where $\psi^{*}$ and $\phi^{*}$ are induced from $\psi^{\natural}$ and $\phi^{\natural}$.
\end{thm}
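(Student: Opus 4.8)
The plan is to obtain the displayed long exact sequence as the cohomology long exact sequence of the short exact sequence of cochain complexes
\[
0 \longrightarrow \Omega^{p}(X) \xrightarrow{\ \psi^{\natural}\ } \Omega^{p}(U_{1})\oplus\Omega^{p}(U_{2}) \xrightarrow{\ \phi^{\natural}\ } \Omega^{p}(U_{1}\cap U_{2}) \longrightarrow 0 ,
\]
one for each $p$, where $\psi^{\natural}$ is the pair of restrictions of a form on $X$ to $U_{1}$ and to $U_{2}$ (via the $j_{t}$), and $\phi^{\natural}$ is the difference of the two restrictions to the overlap $U_{1}\cap U_{2}$ (via the $i_{t}$); then I would invoke the standard connecting-homomorphism (zig-zag) lemma. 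First I would record that $\psi^{\natural}$ and $\phi^{\natural}$ are chain maps: for any differentiable $f$ one has $(f^{\sharp}\diff\omega)_{A}(P)=(\diff\omega)_{A}(f{\circ}P)=\diff(\omega_{A}(f{\circ}P))=\diff((f^{\sharp}\omega)_{A}(P))$, so each $i_{t}^{\sharp}$ and $j_{t}^{\sharp}$ commutes with $\diff$, hence so do $\psi^{\natural}$ and $\phi^{\natural}$; and $\phi^{\natural}{\circ}\psi^{\natural}=0$ is immediate, since both composites restrict a form on $X$ to $U_{1}\cap U_{2}$ along the two equal composite inclusions. Everything else is exactness of the three-term sequence in each degree, and for that I would use throughout the sheaf property of $\Omega^{p}$: if $\{O_{\alpha}\}$ is an open cover of a space and $\omega^{\alpha}\in\Omega^{p}(O_{\alpha})$ agree on overlaps, they glue uniquely to a form restricting to each $\omega^{\alpha}$. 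This follows from axiom (C2)/(D2) for $\mathcal{E}^{X}$ together with locality of $\wedge^{p}$ on the site (in Chen's convex setting one first refines each $P^{-1}(O_{\alpha})$ by convex subdomains before invoking (C2)).

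Granting this, exactness at the two easy spots is formal. For injectivity of $\psi^{\natural}$: if $j_{1}^{\sharp}\omega=j_{2}^{\sharp}\omega=0$, then for every plot $P\in\mathcal{E}^{X}(A)$ the element $\omega_{A}(P)\in\wedge^{p}(A)$ vanishes at every point of $P^{-1}(U_{1})$ and of $P^{-1}(U_{2})$, which cover $A$ (by naturality along the inclusions of small convex neighborhoods), hence $\omega_{A}(P)=0$; so $\omega=0$. For exactness at the middle: if $i_{1}^{\sharp}\eta_{1}=i_{2}^{\sharp}\eta_{2}$, the forms $\eta_{1}$ and $\eta_{2}$ agree on $U_{1}\cap U_{2}$ and hence, since $X=U_{1}\cup U_{2}$, glue to a form $\omega\in\Omega^{p}(X)$ with $j_{t}^{\sharp}\omega=\eta_{t}$, i.e.\ $\psi^{\natural}\omega=\eta_{1}\oplus\eta_{2}$.

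The one step that genuinely uses the hypothesis is surjectivity of $\phi^{\natural}$, and here the partition of unity $\{\rho^{(1)},\rho^{(2)}\}$ enters. Given $\tau\in\Omega^{p}(U_{1}\cap U_{2})$, I would form the $0$-form-times-$p$-form product $\rho^{(2)}{\cdot}\tau\in\Omega^{p}(U_{1}\cap U_{2})$, with $(\rho^{(2)}{\cdot}\tau)_{A}(P)=(\Phi(\rho^{(2)}){\circ}P){\cdot}\tau_{A}(P)$. Since the plotwise supports of $\rho^{(2)}$ are contained in $P^{-1}(G_{2})$ for a closed subset $G_{2}$ of $X$ lying inside $U_{2}$ (this is the normality of $\mathbold{\rho}$; for a two-element nice cover such a normal refinement always exists, by a routine shrinking of the $\rho^{(t)}$), the set $U_{1}\smallsetminus G_{2}$ is open, together with $U_{1}\cap U_{2}$ it covers $U_{1}$, and $\rho^{(2)}{\cdot}\tau$ vanishes identically on the overlap $(U_{1}\cap U_{2})\smallsetminus G_{2}$; so the sheaf property extends $\rho^{(2)}{\cdot}\tau$ by $0$ to a form $\eta_{1}\in\Omega^{p}(U_{1})$. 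Symmetrically $-\rho^{(1)}{\cdot}\tau$ extends by $0$ to $\eta_{2}\in\Omega^{p}(U_{2})$, and on $U_{1}\cap U_{2}$ one gets $\phi^{\natural}(\eta_{1}\oplus\eta_{2})=i_{1}^{\sharp}\eta_{1}-i_{2}^{\sharp}\eta_{2}=\rho^{(2)}{\cdot}\tau+\rho^{(1)}{\cdot}\tau=(\rho^{(1)}{+}\rho^{(2)}){\cdot}\tau=\tau$. With the short exact sequence of cochain complexes established, the zig-zag lemma supplies the connecting maps $H^{p}(U_{1}\cap U_{2})\to H^{p+1}(X)$ and the displayed long exact sequence.

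I expect the main obstacle to be this surjectivity step, specifically the bookkeeping that makes ``extension by zero'' rigorous at the level of natural transformations of plot functors: one must verify that the glued $\eta_{t}$ are honestly natural in $A$ and that their plotwise supports sit where required, and --- in Chen's convex case --- one must first secure the sheaf property of $\Omega^{p}$ on the convex site at all, since $P^{-1}(U_{t})$ need not be convex and one has to pass to convex refinements before applying (C2). By comparison, the commutation with $\diff$ and exactness at the two easy spots are routine.
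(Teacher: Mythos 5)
Your proposal is correct and follows essentially the same route as the paper's proof: the same short exact sequence $0 \to \Omega^{p}(X) \to \Omega^{p}(U_{1})\oplus\Omega^{p}(U_{2}) \to \Omega^{p}(U_{1}\cap U_{2}) \to 0$, the same local-gluing arguments (via (C2)/(D2) and naturality on small subdomains) for injectivity of $\psi^{\natural}$ and exactness in the middle, and the same splitting $\eta_{1}=\rho^{(2)}{\cdot}\tau$, $\eta_{2}=-\rho^{(1)}{\cdot}\tau$ extended by zero for surjectivity of $\phi^{\natural}$. The one divergence is your detour through a normal refinement $G_{2}$: the paper does the zero-extension plotwise, using that $\Supp\rho^{(2)}_{A}(j_{1}{\circ}P)$ is already a closed subset of $A$ contained in $P^{-1}(U_{1}\cap U_{2})$ by the very definition of a partition of unity, so that $\{P^{-1}(U_{1}\cap U_{2}),\,A\smallsetminus\Supp\rho^{(2)}_{A}(j_{1}{\circ}P)\}$ covers $A$ and no global closed set $G_{2}\subset X$ is needed; your shrinking claim is nevertheless justifiable for a two-element cover (replace $\rho^{(t)}$ by $h{\circ}\rho^{(t)}/\sum_{s}h{\circ}\rho^{(s)}$ with $h$ smooth, vanishing on $[0,\tfrac13]$, the denominator being positive because the $\rho^{(s)}$ sum to $1$), so this is a stylistic rather than a substantive difference.
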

\begin{proof}
Let $U_{0}=U_{1} \cap U_{2}$.
We show that the following sequence is short exact.
\par\vskip1ex\noindent
\hfil$
0 \longrightarrow \Omega^{p}(X) 
\overset{\psi^{\natural}}\longrightarrow \Omega^{p}(U_{1}) \oplus \Omega^{p}(U_{2}) 
\overset{\phi^{\natural}}\longrightarrow \Omega^{p}(U_{0}) \longrightarrow 0.
$\hfil
\par\vskip1ex\noindent
\begin{description}
\item[(exactness at $\Omega^{p}(X)$)]
Assume $\psi^{\natural}(\omega)=0$, and so $j_{t}^{\sharp}\omega=0$ for $t\!=\!1, 2$.
For any $A \in \Object{\domain}$ and $P \in \mathcal{E}^{X}(A)$, we define $P_{t} : P^{-1}(U_{t}) \to U_{t}$, $t\!=\!1, 2$ by $P_{t}(\mathbold{x}) = P(\mathbold{x})$ for any $\mathbold{x} \in P^{-1}(U_{t})$, so that $P\vert_{P^{-1}(U_{t})}=j_{t}{\circ}P_{t}$ for $t\!=\!1, 2$.
Then, for any $\mathbold{x} \in A$, there is an open subset $A_{\mathbold{x}} \in \Object{\domain}$ of $A$ such that $\mathbold{x} \in A_{\mathbold{x}} \subset P^{-1}(U_{t})$ for $t\!=\!1$ or $2$.
In each case, we have $\omega_{A}(P) \vert_{A_{\mathbold{x}}}= \omega_{A_{\mathbold{x}}}(P\vert_{A_{\mathbold{x}}}) = \omega_{A_{\mathbold{x}}}(P\vert_{P^{-1}(U_{t})}\vert_{A_{\mathbold{x}}}) 
= \omega_{A_{\mathbold{x}}}(j_{t}{\circ}P_{t}\vert_{A_{\mathbold{x}}}) = (j_{t}^{\sharp}\omega)_{A_{\mathbold{x}}}(P_{t}\vert_{A_{\mathbold{x}}}) = 0$, and hence $\omega_{A}(P)\vert_{A_{\mathbold{x}}}=0$ for any $\mathbold{x} \in A$. Thus $\omega_{A}(P)=0$ for any $A$ and $P$, which implies that $\omega=0$.
Thus $\psi^{\natural}$ is monic.
\vspace{1ex}
\item[(exactness at $\Omega^{p}(U_{1}) \oplus \Omega^{p}(U_{2})$)]
Assume $\phi^{\natural}(\eta^{(1)}{\oplus}\eta^{(2)})=0$, and so $i_{1}^{\sharp}\eta^{(1)}=i_{2}^{\sharp}\eta^{(2)}$.
Then we construct $\omega \in \Omega^{p}(X)$ as follows.
For any $A \in \Object{\domain}$ and $P \in \mathcal{E}^{X}(A)$, $\{P^{-1}(U_{t})\,;\,t\!=\!1,2\}$ is an open covering of $A$, and for $t\!=\!0, 1, 2$ we obtain $P_{t} : P^{-1}(U_{t}) \to U_{t}$ given by $P_{t}(\mathbold{x}) = P(\mathbold{x})$ for any $\mathbold{x} \!\in\! P^{-1}(U_{t})$, so that $P_{t}\vert_{P^{-1}(U_{0})}=i_{t}{\circ}P_{0}$ for $t\!=\!1, 2$.
For any $\mathbold{x} \in A$, there is an open subset $A_{\mathbold{x}} \in \Object{\domain}$ of $A$ such that $\mathbold{x} \in A_{\mathbold{x}} \subset P^{-1}(U_{t})$ for $t\!=\!1$ or $2$.
Using it, we define $\omega_{A}(P)(\mathbold{x})$ $=$ $\eta^{(t)}_{A_{\mathbold{x}}}(P\vert_{A_{\mathbold{x}}})(\mathbold{x})$ for any $\mathbold{x} \in A$.
In case when $A_{\mathbold{x}} \!\subset\! A_{0} \!=\! A_{1}{\cap} A_{2}$, we have $\eta^{(1)}_{A_{\mathbold{x}}}(P_{1}\vert_{A_{\mathbold{x}}})$ 
$=$ $\eta^{(1)}_{A_{\mathbold{x}}}(i_{1}{\circ}P_{0}\vert_{A_{\mathbold{x}}})$ 
$=$ $(i_{1}^{\sharp}\eta^{(1)})_{A_{\mathbold{x}}}(P_{0}\vert_{A_{\mathbold{x}}})$ 
$=$ $(i_{2}^{\sharp}\eta^{(2)})_{A_{\mathbold{x}}}(P_{0}\vert_{A_{\mathbold{x}}})$ 
$=$ $\eta^{(2)}_{A_{\mathbold{x}}}(i_{2}{\circ}P_{0}\vert_{A_{\mathbold{x}}})$ 
$=$ $\eta^{(2)}_{A_{\mathbold{x}}}(P_{2}\vert_{A_{\mathbold{x}}})$, and hence $\eta^{(1)}_{A_{\mathbold{x}}}(P_{1}\vert_{A_{\mathbold{x}}})$ $=$ $\eta^{(2)}_{A_{\mathbold{x}}}(P_{2}\vert_{A_{\mathbold{x}}})$.
It implies that $\omega$ is well-defined and $\psi^{\natural}(\omega) = \eta^{(1)}{\oplus}\eta^{(2)}$.
\vspace{1ex}
The converse is clear and we obtain $\Ker{\phi^{\natural}}=\Im{\psi^{\natural}}$.
\item[(exactness at $\Omega^{p}(U_{0})$)]
Assume $\kappa \in \Omega^{p}(U_{0})$. 
Then we define $\kappa^{(t)} \in \Omega^{p}(U_{t})$, $t\!=\!1, 2$ defined as follows.
For any $A_{t} \in \Object{\domain}$ and a plot $P_{t} : A_{t} \to U_{t}$, we define $\kappa^{(t)}_{A_{t}}(P_{t})(\mathbold{x})$ by $(-1)^{t-1}\rho^{3-t}_{A_{t}}(P_{t})(\mathbold{x}){\cdot}\kappa_{A_{t}}(P_{t})(\mathbold{x})$ if $\mathbold{x} \in P_{t}^{-1}(U_{3-t})$ and by $0$ if $\mathbold{x} \not\in P_{t}^{-1}(\Supp{\rho_{A_{t}}^{3-t}(P_{t})})$. 
Then we see that $\kappa^{(t)}$ is well-defined differential $p$-form on $U_{t}$ and $i_{1}^{\sharp}\kappa^{(1)} \!-\! i_{2}^{\sharp}\kappa^{(2)} = \kappa$, and hence $\phi^{\natural}(\kappa^{(1)}{\oplus}\kappa^{(2)}) = \kappa$.
Thus $\phi^{\natural}$ is an epimorphism.
\end{description}
Since $\psi^{\natural}$ and $\phi^{\natural}$ are clearly cochain maps, we obtain the desired long exact sequence.
\end{proof}

Let us turn our attention to the differential forms with compact support.
\begin{thm}[see \cite{Haraguchi:14} or \cite{Izumida:14}]\label{thm:nice-open-covering-2}
Let $(X,\mathcal{E}^{X})$ be a {weakly-separated} differentiable/diffeological space and \,$\mathcal{U}=\{U_{1},U_{2}\}$ a nice open covering of $X$ with a normal partition of unity $\{\rho^{(1)},\rho^{(2)}\}$ belonging to \,$\mathcal{U}$.
Then $i_{t} : U_{1} \cap U_{2} \hookrightarrow U_{t}$ and $j_{t} : U_{t} \hookrightarrow X$, $t=1, 2$, induce homomorphisms $\phi_{\natural}$ $:$ $\Omega_{c}^{p}(U_{1} \cap U_{2}) \to \Omega_{c}^{p}(U_{1}) {\oplus} \Omega_{c}^{p}(U_{2})$ and $\psi_{\natural} : \Omega_{c}^{p}(U_{1}) {\oplus} \Omega_{c}^{p}(U_{2}) \to \Omega_{c}^{p}(X)$ by $\phi_{\natural}(\omega) = i_{1\sharp}\omega{\oplus}i_{2\sharp}\omega$ and $\psi_{\natural}(\eta_{1}{\oplus}\eta_{2}) = j_{1\sharp}\eta_{1}\!-\!j_{2\sharp}\eta_{2}$, and the following sequence is exact.
\par\vskip1ex\noindent\hfil
$\begin{array}{l}
H_{c}^{0}(U_{1} \cap U_{2}) \rightarrow \cdots \rightarrow H_{c}^{p}(U_{1} \cap U_{2}), 
\xrightarrow{\phi_{*}} H_{c}^{p}(U_{1}){\oplus}H_{c}^{p}(U_{2}) 
\xrightarrow{\psi_{*}} H_{c}^{p}(X)
\\[1ex]\qquad\qquad\qquad
\rightarrow H_{c}^{p+1}(U_{1} \cap U_{2}) 
\xrightarrow{\phi_{*}} H_{c}^{p+1}(U_{1}){\oplus}H_{c}^{p+1}(U_{2}) 
\xrightarrow{\psi_{*}} H_{c}^{p+1}(X) \rightarrow \cdots,
\end{array}$\hfil
\par\vskip1.5ex\noindent
where $\psi_{*}$ and $\phi_{*}$ are induced from $\psi_{\natural}$ and $\phi_{\natural}$.
\end{thm}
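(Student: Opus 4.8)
\emph{Overall strategy.} The plan is to mirror and dualize the proof of Theorem~\ref{thm:nice-open-covering-1}. Writing $U_{0}=U_{1}\cap U_{2}$, I would first note that $U_{1},U_{2}$ and $U_{0}$, being open in $X$, are again weakly-separated: a compact subset $C$ of such a $U_{t}$ is compact, hence closed, in $X$ (here $X$ weakly-separated), hence closed in $U_{t}$, since the open sets of $U_{t}$ are exactly the open sets of $X$ contained in $U_{t}$. Consequently $i_{1\sharp},i_{2\sharp}$ (into $U_{1},U_{2}$) and $j_{1\sharp},j_{2\sharp}$ (into $X$) are all legitimately defined by Definition~\ref{defn:chen-induced-map-1}. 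I would then reduce the theorem to the claim that, for each $p$, the sequence
\[
0 \longrightarrow \Omega_{c}^{p}(U_{0}) \xrightarrow{\ \phi_{\natural}\ } \Omega_{c}^{p}(U_{1})\oplus\Omega_{c}^{p}(U_{2}) \xrightarrow{\ \psi_{\natural}\ } \Omega_{c}^{p}(X) \longrightarrow 0
\]
is short exact; since $d$ commutes with each $i_{t\sharp}$ and $j_{t\sharp}$ — which one checks domain-wise over the covering $\{Q^{-1}(V),\,B\smallsetminus Q^{-1}(K_{\omega})\}$ using the identity $(j_{\sharp}\omega)_{B}(j{\circ}Q)=\omega_{B}(Q)$ from the remark following Definition~\ref{defn:chen-induced-map-1} — the maps $\phi_{\natural},\psi_{\natural}$ are cochain maps, and the asserted long exact sequence is then the usual long exact sequence of this short exact sequence of cochain complexes. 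I would lean throughout on the two facts $(j_{\sharp}\omega)_{B}(j{\circ}Q)=\omega_{B}(Q)$ and $\Supp(j_{\sharp}\omega)_{B}(Q)\subset Q^{-1}(K_{\omega})$, with $K_{j_{\sharp}\omega}=K_{\omega}$, and similarly $(i_{t}^{\sharp}\eta)_{B}(Q)=\eta_{B}(i_{t}{\circ}Q)$.

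\emph{Injectivity of $\phi_{\natural}$ and exactness in the middle.} If $i_{1\sharp}\omega=0$, then $\omega_{B}(Q)=(i_{1\sharp}\omega)_{B}(i_{1}{\circ}Q)=0$ for every plot $Q$ of $U_{0}$, so $\omega=0$ and $\phi_{\natural}$ is monic. The composite $\psi_{\natural}{\circ}\phi_{\natural}$ vanishes because $j_{1}{\circ}i_{1}=j_{2}{\circ}i_{2}$ as maps $U_{0}\to X$, whence $j_{1\sharp}i_{1\sharp}=j_{2\sharp}i_{2\sharp}$. Conversely, given $\eta_{1}\oplus\eta_{2}$ with $\zeta:=j_{1\sharp}\eta_{1}=j_{2\sharp}\eta_{2}$, the two support estimates give $\Supp\zeta_{A}(P)\subset P^{-1}(K_{\eta_{1}})\cap P^{-1}(K_{\eta_{2}})=P^{-1}(L)$ with $L:=K_{\eta_{1}}\cap K_{\eta_{2}}$ compact and, lying in $U_{1}\cap U_{2}$, a compact subset of $U_{0}$. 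Evaluating $\zeta$ on plots of the form $(j_{1}{\circ}i_{1}){\circ}Q_{0}=(j_{2}{\circ}i_{2}){\circ}Q_{0}$ then shows $i_{1}^{\sharp}\eta_{1}=i_{2}^{\sharp}\eta_{2}$ as forms, and that this common form $\omega$ satisfies $\Supp\omega_{A}(Q_{0})\subset Q_{0}^{-1}(L)$, so $\omega\in\Omega_{c}^{p}(U_{0})$ with $K_{\omega}=L$. Finally $i_{t\sharp}\omega=\eta_{t}$: one checks this domain-wise on the covering $\{Q^{-1}(U_{0}),\,B\smallsetminus Q^{-1}(L)\}$ of the domain of a plot $Q$ of $U_{t}$, using on the first piece that $\omega=i_{t}^{\sharp}\eta_{t}$ and on the second that $\Supp(\eta_{t})_{B}(Q)=\Supp\zeta_{B}(j_{t}{\circ}Q)\subset Q^{-1}(L)$ — this is the ``exactness at $\Omega^{p}(U_{1})\oplus\Omega^{p}(U_{2})$'' argument of Theorem~\ref{thm:nice-open-covering-1}, run in reverse.

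\emph{Surjectivity of $\psi_{\natural}$ — where normality enters.} Let $\zeta\in\Omega_{c}^{p}(X)$ with compact support $K_{\zeta}$. Normality of the partition of unity furnishes closed sets $G_{1}\subset U_{1}$, $G_{2}\subset U_{2}$ with $\Supp\rho^{(t)}_{A}(P)\subset P^{-1}(G_{t})$ for all $A$ and $P$; then $L_{t}:=G_{t}\cap K_{\zeta}$ is compact (as $X$ is weakly-separated) and contained in $U_{t}$. I would set $(\zeta^{(t)})_{A}(P):=\rho^{(t)}_{A}(P){\cdot}\zeta_{A}(P)$ (the product of a smooth function on $A$ with a $p$-form on $A$); this is again a natural transformation, pullback of forms being multiplicative, and $\Supp(\zeta^{(t)})_{A}(P)\subset P^{-1}(G_{t})\cap P^{-1}(K_{\zeta})=P^{-1}(L_{t})$, so $\zeta^{(t)}\in\Omega_{c}^{p}(X)$ with support $L_{t}\subset U_{t}$. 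By the mechanism of the previous paragraph — a form on $X$ whose support is squeezed into $P^{-1}(L_{t})$ for a compact $L_{t}\subset U_{t}$ equals $j_{t\sharp}\eta_{t}$ for the form $\eta_{t}\in\Omega_{c}^{p}(U_{t})$ given by $(\eta_{t})_{B}(Q):=\zeta^{(t)}_{B}(j_{t}{\circ}Q)$ with $K_{\eta_{t}}:=L_{t}$ — I get $\zeta^{(t)}=j_{t\sharp}\eta_{t}$. Since $\rho^{(1)}_{A}(P)+\rho^{(2)}_{A}(P)\equiv1$ we have $\zeta^{(1)}+\zeta^{(2)}=\zeta$, hence $\psi_{\natural}(\eta_{1}\oplus(-\eta_{2}))=j_{1\sharp}\eta_{1}+j_{2\sharp}\eta_{2}=\zeta$.

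\emph{Main obstacle.} The one genuinely non-formal point, and the sole place where ``normal'' is needed rather than merely ``nice'', is the support bookkeeping in the surjectivity step: I must know that $\rho^{(t)}\zeta$ extends by zero to a form whose compact support sits \emph{inside} $U_{t}$, and this requires $\Supp\rho^{(t)}$ to be trapped in a closed set $G_{t}\subset U_{t}$ so that $G_{t}\cap K_{\zeta}$ is compact in $U_{t}$ — with a merely subordinate (non-normal) partition of unity the relevant closed support can leak out of $U_{t}$ and $\rho^{(t)}\zeta$ need not define a compactly supported form on $U_{t}$ at all. Everything else — the injectivity, the middle exactness, the cochain-map property, the domain-wise gluings over $\{Q^{-1}(U_{0}),B\smallsetminus Q^{-1}(L)\}$ and $\{P^{-1}(U_{t}),B\smallsetminus P^{-1}(L_{t})\}$, and the passage to the long exact sequence — is formal and parallels (indeed dualizes) the proof of Theorem~\ref{thm:nice-open-covering-1}, while weak-separatedness of $X$, inherited by $U_{1},U_{2},U_{0}$, is exactly what legitimizes the extension-by-zero maps and makes $G_{t}\cap K_{\zeta}$ and $K_{\eta_{1}}\cap K_{\eta_{2}}$ compact.
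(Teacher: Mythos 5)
Your proposal is correct and follows essentially the same route as the paper: reduce to the short exact sequence $0 \to \Omega_{c}^{p}(U_{0}) \to \Omega_{c}^{p}(U_{1})\oplus\Omega_{c}^{p}(U_{2}) \to \Omega_{c}^{p}(X) \to 0$, prove injectivity via the extension property of $i_{1\sharp}$, middle exactness via the common restriction with support in $K_{\eta_{1}}\cap K_{\eta_{2}}$, and surjectivity by multiplying by the normal partition of unity so that the support lands in the compact set $G_{t}\cap K_{\zeta}\subset U_{t}$. The only differences are cosmetic — you build $\rho^{(t)}\zeta$ on $X$ and then restrict, and carry the sign on $\eta_{2}$ rather than in the definition of $\kappa^{(t)}$ — and you in fact supply some details (weak-separatedness of open subsets, the verification $i_{t\sharp}\omega=\eta_{t}$) that the paper leaves implicit.
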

\begin{proof}
Let $U_{0}=U_{1} \cap U_{2}$.
We show that the following sequence is short exact.
\par\vskip.5ex\noindent
\hfil$
0 \longrightarrow \Omega_{c}^{p}(U_{0}) 
\overset{\phi_{\natural}}\longrightarrow \Omega_{c}^{p}(U_{1}) \oplus \Omega_{c}^{p}(U_{2}) 
\overset{\psi_{\natural}}\longrightarrow \Omega_{c}^{p}(X) \longrightarrow 0.
$\hfil
\par\vskip.5ex\noindent
\begin{description}
\item[(exactness at $\Omega_{c}^{p}(U_{0})$)]
Assume $\phi_{\natural}(\omega)=0$.
Then $i_{1\sharp}(\omega) = i_{2\sharp}(\omega) = 0$.
Since $i_{1\sharp}(\omega)$ is an extension of $\omega$, we obtain $\omega=0$.
Thus $\phi_{\natural}$ is a monomorphism.
\vspace{.5ex}
\item[(exactness at $\Omega_{c}^{p}(U_{1}) \oplus \Omega_{c}^{p}(U_{2})$)]
Assume $\psi_{\natural}(\eta^{(1)}{\oplus}\eta^{(2)})=0$.
By definition, we have $j_{1\sharp}(\eta^{(1)})=j_{2\sharp}(\eta^{(2)})$.
For any $A \in \Object{\domain}$ and $P \in \mathcal{E}^{X}(A)$, we have 
$j_{1\sharp}(\eta^{(1)})_{A}(P)$ $=$ $j_{2\sharp}(\eta^{(2)})_{A}(P)$.
So, for any $B \in \Object{\domain}$ and a plot $Q : B \to U_{0}$, $\eta^{(1)}_{B}(i_{1}{\circ}Q) = j_{1}^{\sharp}\eta^{(1)}_{B}(j_{1}{\circ}i_{1}{\circ}Q) = j_{2}^{\sharp}\eta^{(2)}_{B}(j_{2}{\circ}i_{2}{\circ}Q) = \eta^{(2)}_{B}(i_{2}{\circ}Q)$.
So we define $\eta^{(0)} \in \Omega^{p}(U_{0})$ by $\eta^{(0)}_{B}(Q) = \eta^{(1)}_{B}(i_{1}{\circ}Q)=\eta^{(2)}_{B}(i_{2}{\circ}Q)$.
On the other hand, $K_{j_{t\sharp}\eta^{(t)}} = K_{\eta^{(t)}}$ by definition, and hence we obtain $\Supp{\eta^{(0)}_{B}(Q)} = \Supp{\eta^{(1)}_{B}(i_{1}{\circ}Q)} = \Supp{\eta^{(2)}_{B}(i_{2}{\circ}Q)}$ $\subset$ $Q^{-1}(K_{\eta^{(1)}} \cap K_{\eta^{(2)}})$.
Then $\eta^{(0)} \in \Omega_{c}^{p}(U_{0})$ for $K_{\eta^{(0)}}=K_{\eta^{(1)}} \cap K_{\eta^{(2)}}$ is compact. 
\vspace{.5ex}
\item[(exactness at $\Omega_{c}^{p}(X)$)]
Assume $\kappa \in \Omega_{c}^{p}(X)$.
For any $A_{t} \in \Object{\domain}$ and a plot $P_{t} : A_{t} \to U_{t}$, we define $\kappa^{(t)}_{A_{t}}(P_{t})(\mathbold{x})$ by $(-1)^{t-1}\rho^{(t)}_{A_{t}}(P_{t})(\mathbold{x}){\cdot}\kappa_{A_{t}}(j_{t}{\circ}P_{t})(\mathbold{x})$ if $\mathbold{x} \in P_{t}^{-1}(U_{0})$ and by $0$ if $\mathbold{x} \not\in \Supp{\rho_{A_{t}}^{(t)}}(P_{t})$.
Then $\kappa^{(t)}$ is a well-defined differential $p$-form on $U_{t}$ with compact support $K_{\kappa^{(t)}} = K_{\kappa} \cap G_{U_{t}}$ in $U_{t}$ and $j_{1}^{\sharp}\kappa^{(1)} \!-\! j_{2}^{\sharp}\kappa^{(2)} = \kappa$, and hence we have $\psi_{\natural}(\kappa^{(1)}{\oplus}\kappa^{(2)}) = \kappa$.
Thus $\psi_{\natural}$ is an epimorphism.
\end{description}
Since $\phi_{\natural}$ and $\psi_{\natural}$ are clearly cochain maps, we obtain the desired long exact sequence.
\end{proof}

\section{{\Cubetxt} Category}

\begin{defn}\label{defn:cubic-category}
a concrete {\sitetxt} ${\Cubical}$ is defined as follows:
\begin{description}
\vspace{.5ex}
\item[Object]
$\Object{\Cubical} = \{\underline{0}, \underline{1}, \underline{2}, \cdots\} \approx \cardinal$, \quad 
$\underline{n}=\simp_{L}^{n} := \simp^{n} \cap L$, \\where\vspace{.5ex}
 $\simp^{n} \!=\! \{(t_{1},\ldots,t_{n}) \,;\,\cubedef\}$ and $L \!=\! \integral^{n} \!\subset\! \real^{n}$ is an integral lattice.
\item[Morphism]
$\Morphism{\Cubical}$ is generated by the following sets of morphisms.
\begin{description}
\vskip.5ex
\item[boundary]
$\partial^{\epsilon}_{i} : \underline{n} \to \underline{n{+}1}$, $\epsilon \!\in\! \dot{I}\!=\!\{0,1\}, 1 \!\leq\! i \!\leq\! n{+}1$, $n \!\geq\! 0$, given by 
\par\vskip.5ex$\textstyle%
\partial^{\epsilon}_{i}(\mathbold{t}) = (t_{1},\ldots,t_{i-1},\epsilon,t_{i+1},\ldots,t_{n})$ \ for \ $\mathbold{t} = (t_{1},{\cdots},t_{n}) \in \simp_{L}^{n}$,
\vskip1ex
\item[degeneracy]
$\varepsilon_{i} : \underline{n{+}1} \to \underline{n}$, \  
$1 \!\leq\! i \!\leq\! n{+}1, \ n \in \cardinal$ given by 
\par\vskip.5ex$\textstyle%
\varepsilon_{i}(\mathbold{t})=(t_{1},{\cdots},t_{i-1},t_{i+1},{\cdots},t_{n+1}), \ \ \mathbold{t}=(t_{1},{\cdots},t_{n+1}) \in \simp_{L}^{n},
$\hfil\par
\vskip.5ex
\end{description}
which satisfies the following relations.
\par\vskip1ex\noindent
\begin{inparaenum}
\item\label{prop:boundary-degeneracy-1}
$\displaystyle
\partial^{\epsilon'}_{j}{\circ}\partial^{\epsilon}_{i} = \begin{cases}
\partial^{\epsilon}_{i}{\circ}\partial^{\epsilon'}_{j-1} & \text{if} \ i \!<\! j 
\\[1ex]
\partial^{\epsilon}_{i+1}{\circ}\partial^{\epsilon'}_{j} & \text{if} \ i \!\geq\! j 
\end{cases}$\qquad
\item\label{prop:boundary-degeneracy-2}
$\displaystyle
\varepsilon_{j}{\circ}\varepsilon_{i} = \begin{cases}
\varepsilon_{i}{\circ}\varepsilon_{j+1} & \text{if} \ i \leq j \\[1ex]
\varepsilon_{i-1}{\circ}\varepsilon_{j} & \text{if} \ i > j
\end{cases}$
\par\vspace{1.5ex}\noindent
\item\label{prop:boundary-degeneracy-3}
$\displaystyle
\partial^{\epsilon'}_{j}{\circ}\varepsilon_{i} = \begin{cases}
\varepsilon_{i+1}{\circ}\partial^{\epsilon'}_{j} & \text{if} \ i \geq j \\[1ex]
\varepsilon_{i}{\circ}\partial^{\epsilon'}_{j+1} & \text{if} \ i < j
\end{cases}$\qquad
\item\label{prop:boundary-degeneracy-4}
$\displaystyle
\varepsilon_{j}{\circ}\partial^{\epsilon}_{i} = \begin{cases}
\partial^{\epsilon}_{i-1}{\circ}\varepsilon_{j} & \text{if} \ i > j \\[.5ex]
\partial^{\epsilon}_{i}{\circ}\varepsilon_{j-1} & \text{if} \ i < j \\[.5ex]
\id & \text{if} \ i = j
\end{cases}$
\end{inparaenum}
\end{description}
\end{defn}
Since $\simp^{n}_{L} = \simp^{n} \cap L \subset \real^{n}$, we can extend the boundaries and the degeneracies as smooth maps $\partial^{\epsilon}_{i} : \real^{n} \to \real^{n+1}$ and $\varepsilon_{i} : \real^{n+1} \to \real^{n}$.
Let ${\Cubical} : {\Cubical} \rightarrow \convex$ be the covariant functor defined by ${\Cubical}(\underline{n}) = \simp^{n}$, ${\Cubical}(\partial^{\epsilon}_{i})=\partial^{\epsilon}_{i}\vert_{\simp^{n}} : \simp^{n} \to \simp^{n+1}$ and ${\Cubical}(\varepsilon_{i})=\varepsilon_{i}\vert_{\simp^{n+1}} : \simp^{n+1} \to \simp^{n}$.
\begin{rem}
There is a smooth relative homeomorphism $\pi_{n} : (\Box^{n},\partial{\Box^{n}}) \to (\triangle^{n},\partial{\triangle^{n}})$ given by
$
\pi_{n}(t_{1},\cdots,t_{n}) = (s_{1},\ldots,s_{n}), \  s_{k} = t_{k} {\cdot}\cdots{\cdot} t_{n},
$
where the standard $n$-simplex $\triangle^{n}$ is assumed to be as 
$\triangle^{n} = \{(x_{1},\cdots,x_{n}) \in \real^{n} \,;\, 0=x_{0} \!\leq\! x_{1} \!\leq\! \cdots \!\leq\! x_{n} \!\leq\! x_{n+1}=1\}.$
\end{rem}

According to \cite{BH:11}, there is a natural embedding $ch: \diffeological \to \differentiable$.
So, from now on, we deal mainly with differentiable spaces, rather than diffeological spaces.
We denote $\mathcal{E}_{\Cubical}^{X}=\mathcal{E}^{X}{\circ}{\Cubical}$ and $\wedge^{p}_{\Cubical}=\wedge^{p}{\circ}{\Cubical}$, and a plot in $\mathcal{E}_{\Cubical}^{X}(\underline{n}) = \mathcal{E}^{X}(\simp^{n})$ is called an $n$-plot.

Let $X=(X,\mathcal{E}^{X})$ be a differentiable space. 
Then we denote $\Sigma_{n}(X) = \mathcal{E}^{X}(\simp^{n})$ the set of $n$-plots.
Let $\Gamma_{n}(X)$ be the free abelian group generated by $\Sigma_{n}(X)$ and $\Gamma^{n}(X,R) = \Hom(\Gamma_{n}(X);R)$, where $R$ is a commutative ring with unit.
Then $\Gamma^{\ast}(X;R)$ is a cochain complex and we obtain a smooth version of {\cubicaltxt} singular cohomology $H^{\ast}(X,R)$ in a canonical manner, which satisfies axioms of cohomology theories such as additivity, dimension and homotopy axioms together with a Mayer-Vietoris exact sequence.

\section{{\cubicaltxt} de Rham cohomology}

We introduce a version of a differential form by using $\mathcal{E}_{\Cubical}^{X}$ and $\wedge^{p}_{\Cubical}$.

\begin{defn}[{\cubicaltxt} differential form]
A {\cubicaltxt} differential form on a differentiable space $X$ is a natural transformation $\omega : \mathcal{E}^{X}_{\Cubical} \to \wedge^{p}_{\Cubical}$ of contravariant functors : ${\Cubical} \to \sets$. 
We denote $\omega = \{\omega_{\underline{n}}\,;\,n \!\geqq\! 0\}$, where $\omega_{\underline{n}} : \mathcal{E}^{X}(\simp^{n}) \to \wedge^{p}({\simp}^{n})$.
The set of {\cubicaltxt} differential forms on $X$ is denoted by $\Omega_{\Cubical}^{p}(X)$ and $\Omega^{\ast}_{\Cubical}(X) = \underset{p}{\oplus}\, \Omega_{\Cubical}^{p}(X)$.
\end{defn}

We denote by $\Cube^{\ast} : \Omega_{\mathcal{C}}^{p}(X){} \to \Omega_{\Cubical}^{p}(X)$ the natural map induced from $\Cube : {\Cubical} \rightarrow {\convex}$.
\begin{thm}
The map $\Cube^{\ast} : \Omega_{\mathcal{C}}^{p}(X){} \to \Omega_{\Cubical}^{p}(X)$ is monic.
\end{thm}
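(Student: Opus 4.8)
The plan is to exploit that a differential $p$-form on a convex domain is detected pointwise by its pullbacks along affine embeddings of small cubes, and every such embedding factors through $\Cube : \Cubical \to \convex$. So let $\omega \in \Omega^{p}_{\mathcal{C}}(X)$ with $\Cube^{\ast}\omega = 0$; concretely this means $\omega_{\simp^{n}}(Q) = 0$ for every $n \geqq 0$ and every $n$-plot $Q \in \mathcal{C}^{X}(\simp^{n})$. Fix a convex $n$-domain $A$, a plot $P \in \mathcal{C}^{X}(A)$, and a point $\mathbold{x}_{0} \in \Int{A}$. It suffices to show $\omega_{A}(P)(\mathbold{x}_{0}) = 0$ in $\wedge^{p}(T^{*}_{n})$, since then $\omega_{A}(P)$ vanishes on $\Int{A}$, hence on $A$ (because $\Int{A}$ is dense in the convex domain $A$ and $\omega_{A}(P) : A \to \wedge^{p}(T^{*}_{n})$ is continuous), and $A$, $P$ being arbitrary this forces $\omega = 0$.

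To prove the pointwise vanishing, I would choose $\delta > 0$ so small that the cube $V := \mathbold{x}_{0} + \delta\cdot[-1,1]^{n}$ is contained in $\Int{A}$, and take the affine map $f : \simp^{n} \to A$, $f(t_{1},\cdots,t_{n}) = \mathbold{x}_{0} + \delta\,(2t_{1}-1,\cdots,2t_{n}-1)$. Being smooth between convex domains, $f$ is a morphism of $\convex$, and it corestricts to a diffeomorphism $\bar{f} : \simp^{n} \to V$ carrying the centre of the cube to $\mathbold{x}_{0}$. By (C3), $\mathcal{C}^{X}(f)(P) = P{\circ}f$ lies in $\mathcal{C}^{X}(\simp^{n})$, so the hypothesis gives $\omega_{\simp^{n}}(P{\circ}f) = 0$; naturality of $\omega : \mathcal{C}^{X} \to \wedge^{p}$ along $f$ then yields $f^{*}(\omega_{A}(P)) = \omega_{\simp^{n}}(P{\circ}f) = 0$ in $\wedge^{p}(\simp^{n})$. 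Writing $f = \iota{\circ}\bar{f}$ with $\iota : V \hookrightarrow A$ the inclusion, functoriality of $\wedge^{p}$ gives $f^{*} = \bar{f}^{*}{\circ}\iota^{*}$, and since $\bar{f}$ is a diffeomorphism $\bar{f}^{*}$ is invertible; therefore $\omega_{A}(P)\vert_{V} = \iota^{*}(\omega_{A}(P)) = 0$, and in particular $\omega_{A}(P)(\mathbold{x}_{0}) = 0$, as required.

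I do not anticipate a genuine obstacle here: the whole argument reduces to the elementary fact that affine embeddings of cubes into $\Int{A}$ suffice to reconstruct a form. The two points needing a little attention are the verification that $f$ and its corestriction $\bar{f}$ really are a morphism and an isomorphism of $\convex$ on which $\wedge^{p}$ acts as the corresponding (iso)morphism on $\wedge^{p}(-)$ — immediate from functoriality — and the passage from $\Int{A}$ to $A$, which rests on $\Clo{(\Int{A})} = \Clo{A} \supseteq A$ for a convex domain together with continuity of $\omega_{A}(P)$.
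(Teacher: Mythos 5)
Your argument is correct and is essentially the paper's own proof: the paper likewise places a small affine copy of $\simp^{n}$ around each interior point $u$ of $A$, uses a linear diffeomorphism $\phi : \simp^{n} \approx \simp^{n}_{u}$ together with naturality of $\omega$ and invertibility of $\phi^{*}$ to get $\omega_{A}(P)\vert_{\simp^{n}_{u}} = 0$, and then passes from $\Int{A}$ to $A$ by continuity. Your version merely writes the affine map explicitly and dispenses with the paper's (unused) induction on $n$; no substantive difference.
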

\begin{proof}
Assume that $\omega \in \Omega_{\mathcal{C}}^{p}(X)$ satisfies
$\Cube^{\ast}(\omega)=0 : \mathcal{E}^{X}_{\Cubical} \to \wedge^{p}_{\Cubical}$. 

By induction on $n$, we show $\omega_{A}=0$ for any convex $n$-domain $A$.
\par($n=0$) \ 
In this case, we have $\Omega_{\mathcal{C}}^{0}(X)=\Omega^{0}_{\Cubical}(X)$ and $\omega_{points}=0$.
\par($n>0$) \ 
Let $P : A \to X$ be a plot of $X$, where $A$ is a convex $n$-domain.
For any element $u \in \Int{A}$, there is a small {simplex} $\simp^{n}_{u} \subset \Int{A}$ such that $\Int{\simp^{n}_{u}} \ni u$.
Then there is a linear diffeomorphism $\phi : \simp^{n} \approx \simp^{n}_{u}$.
Hence $P{\circ}\phi \in C_{\mathcal{C}}^{\infty}(\simp^{n},X)$ and we obtain 
\par\vskip1ex\hfil
$0=\Cube^{\ast}(\omega)_{n}(P{\circ}\phi) =  \omega_{\simp^{n}}(P{\circ}\phi) = \phi^{*}(\omega_{\simp^{n}_{u}}(P|_{\simp^{n}_{u}})) = \phi^{*}(\omega_{A}(P)|_{\simp^{n}_{u}})$.
\hfil\par\vskip1ex\noindent
Since $\phi$ is a diffeomorphism, we have $\omega_{A}(P)|_{\simp_{u}^{n}}=0$ for any $u \in \Int{A}$.
Thus we obtain $\omega_{A}(P)=0$ on $\Int{A}$.
Since $\omega_{A}(P)$ is continuous, $\omega_{A}(P)=0$ on $A$. 
\end{proof}

A differentiable map induces a homomorphism of {\cubicaltxt} differential forms as follows:
\begin{defn}\label{defn:cubical-induced-map}
Let $f : X \to Y$ be a differentiable map between differentiable spaces $X = (X,\mathcal{E}^{X})$ and $Y = (Y,\mathcal{E}^{Y})$.
\begin{enumerate}
\item
We obtain a homomorphism $f^{\sharp} : \Omega^{p}_{\Cubical}(Y) \to \Omega^{p}_{\Cubical}(X)$: let $\omega \in \Omega_{\Cubical}^{p}(Y)$. Then 
\par\vskip.5ex\noindent
\hfil$
(f^{\sharp}\omega_{\underline{n}})(P)=\omega_{\underline{n}}(f{\circ}P) \quad \text{for any \ $P \in \mathcal{E}_{\Cubical}^{X}(\underline{n})$, $n \geq 0$.}
$\hfil
\vspace{.5ex}
\item
If a differentiable map $f$ is proper, then we have $f^{\sharp}(\Omega^{p}_{{\Cubical}_{c}}(Y)) \subset \Omega^{p}_{{\Cubical}_{c}}(X)$ by taking $K_{f^{\sharp}\omega}=f^{-1}(K_{\omega})$ for any $\omega \in \Omega^{p}_{{\Cubical}_{c}}(Y)$.
\end{enumerate}
\end{defn}

\begin{defn}[External derivative]
Let $X=(X,\mathcal{E})$ be a differentiable space.
The external derivative $\diff : \Omega_{\Cubical}^{p}(X) \to \Omega^{p+1}_{\Cubical}(X)$ is defined as follows.
\par\vskip1ex\noindent
\hfil$
(\diff\omega)_{\underline{n}}(P) = \diff(\omega_{\underline{n}}(P))
\quad\text{for an $n$-plot $P \in \mathcal{E}_{\Cubical}(\underline{n})=\mathcal{E}({\simp}^{n})$.}
$\hfil
\end{defn}

\begin{defn}
Let $X=(X,\mathcal{E})$ be  a differentiable space.
\begin{description}
\item[{\Cubicaltxt} de Rham cohomology]
$H_{\Cubical}^{p}(X) = \displaystyle\frac{Z_{\Cubical}^{p}(X)}{B_{\Cubical}^{p}(X)}$,
\par\vskip.5ex\noindent
where $Z_{\Cubical}^{p}(X) = \Ker{d}\cap \Omega_{\Cubical}^{p}(X)$
and $B_{\Cubical}^{p}(X) = \diff (\Omega_{\Cubical}^{p}(X))$.
\vskip.5ex
\item[{\Cubicaltxt} de Rham cohomology with compact support]
$H_{{\Cubical}_{c}}^{p}(X) = \displaystyle\frac{Z_{{\Cubical}_{c}}^{p}(X)}{B_{{\Cubical}_{c}}^{p}(X)}$,
\par\vskip.5ex\noindent
where $Z_{{\Cubical}_{c}}^{p}(X) = \Ker{d}\cap \Omega_{{\Cubical}_{c}}^{p}(X)$
and $B_{{\Cubical}_{c}}^{p}(X) = \diff (\Omega_{{\Cubical}_{c}}^{p}(X))$.
\end{description}
\end{defn}

\begin{expl}\label{expl:one-point-set-cube}
Let $X=(X,\mathcal{E}^{X})$ be a differentiable space with $X=\{\ast\}$ one-point-set.
Then we have $H_{\Cubical}^{p}(\{\ast\}) = \real$ if $p=0$ and $0$ otherwise.
\end{expl}

\begin{prop}
Let $X=(X,\mathcal{E}^{X})$ and $Y=(Y,\mathcal{E}^{Y})$ be differentiable spaces.
\begin{enumerate}
\item
For a differentiable map $f : X \to Y$, the homomorphism $f^{\sharp} : \Omega_{\Cubical}^{\ast}(Y) \to \Omega_{\Cubical}^{\ast}(X)$ induces a homomorphism $H_{\Cubical}^{\ast}(Y) \to H_{\Cubical}^{\ast}(X)$.
\item
If a differentiable map $f : X \to Y$ is proper, then the homomorphism $f^{\sharp} : \Omega_{{\Cubical}_{c}}^{\ast}(Y) \to \Omega_{{\Cubical}_{c}}^{\ast}(X)$ induces a homomorphism $f^{*} : H^{\ast}_{{\Cubical}_{c}}(Y) \to H^{\ast}_{{\Cubical}_{c}}(X)$.
\end{enumerate}
\end{prop}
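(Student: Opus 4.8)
The plan is to deduce both assertions from the single observation that $f^{\sharp}$ is a cochain map, i.e.\ that it commutes with the external derivative $\diff$ on {\cubicaltxt} differential forms; once this is in hand, the induced homomorphisms on cohomology come from the standard formal argument.

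First I would verify the commutation $\diff{\circ}f^{\sharp} = f^{\sharp}{\circ}\diff$ on $\Omega_{\Cubical}^{\ast}(Y)$. Fix $\omega \in \Omega_{\Cubical}^{p}(Y)$ and an $n$-plot $P \in \mathcal{E}_{\Cubical}^{X}(\underline{n})$. By Definition~\ref{defn:cubical-induced-map} we have $(f^{\sharp}\omega)_{\underline{n}}(P) = \omega_{\underline{n}}(f{\circ}P)$, so the definition of the external derivative on $\Omega_{\Cubical}^{\ast}$ gives
\[
(\diff(f^{\sharp}\omega))_{\underline{n}}(P) = \diff((f^{\sharp}\omega)_{\underline{n}}(P)) = \diff(\omega_{\underline{n}}(f{\circ}P)) = (\diff\omega)_{\underline{n}}(f{\circ}P) = (f^{\sharp}(\diff\omega))_{\underline{n}}(P).
\]
Since this holds for every $n$ and every $n$-plot, $\diff{\circ}f^{\sharp} = f^{\sharp}{\circ}\diff$. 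Hence $f^{\sharp}$ carries cocycles to cocycles and coboundaries to coboundaries, and therefore descends to the quotients to give a well-defined homomorphism $f^{\ast} : H_{\Cubical}^{\ast}(Y) \to H_{\Cubical}^{\ast}(X)$. This proves (1).

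For (2) I would first recall from Definition~\ref{defn:cubical-induced-map} that when $f$ is proper one has $f^{\sharp}(\Omega_{{\Cubical}_{c}}^{p}(Y)) \subset \Omega_{{\Cubical}_{c}}^{p}(X)$ with $K_{f^{\sharp}\omega} = f^{-1}(K_{\omega})$, the latter being compact precisely because $f$ is proper. Next, $\diff$ preserves compact supports: for any $n$-plot $P$ one has $\Supp{\diff(\omega_{\underline{n}}(P))} \subset \Supp{\omega_{\underline{n}}(P)}$, since $\diff$ only differentiates the coefficient functions. Thus the restriction of $f^{\sharp}$ to $\Omega_{{\Cubical}_{c}}^{\ast}$ is again a cochain map, and repeating the argument of the previous paragraph verbatim yields the homomorphism $f^{\ast} : H_{{\Cubical}_{c}}^{\ast}(Y) \to H_{{\Cubical}_{c}}^{\ast}(X)$. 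The whole argument is formal, so there is no genuine obstacle; the closest thing to a point meriting a word is the support inclusion $\Supp{\diff\eta} \subset \Supp{\eta}$ together with the compactness of $f^{-1}(K_{\omega})$, both of which are immediate from the definitions.
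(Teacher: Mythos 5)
Your proposal is correct and is exactly the routine verification the paper leaves implicit (no proof is given there): the identity $(f^{\sharp}\omega)_{\underline{n}}(P)=\omega_{\underline{n}}(f{\circ}P)$ together with $(\diff\omega)_{\underline{n}}(Q)=\diff(\omega_{\underline{n}}(Q))$ makes $f^{\sharp}$ a cochain map, and in the compactly supported case the observations $\Supp{\diff\eta}\subset\Supp{\eta}$ and the compactness of $K_{f^{\sharp}\omega}=f^{-1}(K_{\omega})$ are all that is needed. Nothing is missing.
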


\begin{thm}
By definition, we clearly have $H^{\ast}_{\Cubical}(\underset{\alpha}{\coprod}X_{\alpha}) = \underset{\alpha}{\prod}H^{\ast}_{\Cubical}(X_{\alpha})$.
\end{thm}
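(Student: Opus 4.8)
The plan is to reduce the claim to the corresponding coproduct statement for the cochain complexes, exactly as in the analogous (non-cubical) Theorem already established for $H^{\ast}_{\mathcal{E}}$. First I would observe that a plot $P \in \mathcal{E}^{\coprod_{\alpha} X_{\alpha}}(\Box^{n})$, having connected domain $\Box^{n}$, factors through exactly one summand: there is a unique $\alpha$ and a plot $P_{\alpha} \in \mathcal{E}^{X_{\alpha}}(\Box^{n})$ with $P = \iota_{\alpha}\circ P_{\alpha}$, where $\iota_{\alpha} : X_{\alpha} \hookrightarrow \coprod_{\beta} X_{\beta}$ is the canonical inclusion. This uses only that $\Box^{n}$ is connected and the description of the coproduct diffeology/differentiable structure recalled earlier (the structure on $\coprod X_{\alpha}$ is the final one, so its plots on a connected domain are precisely the plots of the individual summands composed with the inclusions).

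Next I would use this to identify the cubical $p$-forms. Given a family $\{\omega^{\alpha}\}_{\alpha}$ with $\omega^{\alpha} \in \Omega^{p}_{\Cubical}(X_{\alpha})$, define $\omega$ on $\coprod X_{\alpha}$ by $\omega_{\underline{n}}(P) = \omega^{\alpha}_{\underline{n}}(P_{\alpha})$ for the unique $\alpha$ and $P_{\alpha}$ as above; naturality of $\omega$ with respect to the boundary and degeneracy morphisms of $\Cubical$ follows from that of each $\omega^{\alpha}$ together with uniqueness of the factorization (a morphism $\Box^{m}\to\Box^{n}$ in $\Cubical$ maps into a connected domain, so it does not change the chosen summand). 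Conversely, restriction along $\iota_{\alpha}^{\sharp}$ sends $\omega$ to a family $\{\iota_{\alpha}^{\sharp}\omega\}_{\alpha}$, and these two operations are mutually inverse; hence $\Omega^{\ast}_{\Cubical}(\coprod_{\alpha} X_{\alpha}) \cong \prod_{\alpha}\Omega^{\ast}_{\Cubical}(X_{\alpha})$ as graded vector spaces. Because the external derivative $d$ is computed plotwise — $(d\omega)_{\underline{n}}(P) = d(\omega_{\underline{n}}(P))$ — this isomorphism commutes with $d$, so it is an isomorphism of cochain complexes.

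Finally, taking cohomology and using that cohomology commutes with arbitrary products of cochain complexes of $\real$-vector spaces (products are exact in $\real$-modules), we get $H^{\ast}_{\Cubical}(\coprod_{\alpha} X_{\alpha}) \cong \prod_{\alpha} H^{\ast}_{\Cubical}(X_{\alpha})$, and one checks this map is the one induced by the family $\{\iota_{\alpha}^{*}\}$, so it respects the graded-algebra structure. I do not expect a serious obstacle here — the statement is essentially immediate once one records the unique-factorization property of plots through a coproduct — which is presumably why the authors write "by definition, we clearly have." The only point deserving a sentence of care is that $d$ acts plotwise so that the form-level isomorphism is a chain isomorphism, and that products preserve exactness over a field so that passing to cohomology is harmless.
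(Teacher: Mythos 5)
Your proposal is correct and is exactly the argument the paper leaves implicit behind the phrase ``by definition, we clearly have'': since $\simp^{n}$ is connected, every $n$-plot of $\underset{\alpha}{\coprod}X_{\alpha}$ factors uniquely through a single summand, so $\Omega^{\ast}_{\Cubical}(\underset{\alpha}{\coprod}X_{\alpha})\cong\underset{\alpha}{\prod}\Omega^{\ast}_{\Cubical}(X_{\alpha})$ as cochain complexes and cohomology commutes with products. The only cosmetic remark is that exactness of products holds over any ring, not just over $\real$, so no field hypothesis is needed.
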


\begin{thm}
$H^{\ast}_{\Cubical}$ is a contravariant functor from $\differentiable$ 
to $\gradedalgebra$. 
\end{thm}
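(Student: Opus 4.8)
The plan is to verify the two defining properties of a contravariant functor to $\gradedalgebra$: that $H^{\ast}_{\Cubical}$ sends objects to graded algebras and morphisms to graded-algebra homomorphisms, and that it respects identities and composition. The bulk of the work is already dispersed through the preceding definitions and propositions, so this theorem is essentially a bookkeeping assembly.

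First I would equip $H^{\ast}_{\Cubical}(X)$ with its graded-algebra structure. For $\omega \in \Omega^{p}_{\Cubical}(X)$ and $\eta \in \Omega^{q}_{\Cubical}(X)$, define the wedge product $(\omega \wedge \eta)_{\underline n}(P) = \omega_{\underline n}(P) \wedge \eta_{\underline n}(P)$ using the exterior product on $\wedge^{\ast}(T^{\ast}_{n})$ and check naturality in $\underline n$ from the naturality of $\omega$ and $\eta$ together with the fact that the Jacobian-determinant pullback $\Cube^{\ast}$ on $\wedge^{\ast}$ is an algebra map; this makes $\Omega^{\ast}_{\Cubical}(X)$ a graded $\real$-algebra. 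Then I would record the Leibniz rule $\diff(\omega \wedge \eta) = \diff\omega \wedge \eta + (-1)^{p}\,\omega \wedge \diff\eta$, which holds pointwise on each $\simp^{n}$ because it holds for the external derivative of Definition \ref{defn:souriau-structure-3}; consequently $Z^{\ast}_{\Cubical}(X)$ is a subalgebra, $B^{\ast}_{\Cubical}(X)$ is a two-sided ideal in it, and the quotient $H^{\ast}_{\Cubical}(X)$ inherits a well-defined graded-algebra structure.

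Next I would treat morphisms. Given a differentiable map $f : X \to Y$, Definition \ref{defn:cubical-induced-map} gives $f^{\sharp} : \Omega^{p}_{\Cubical}(Y) \to \Omega^{p}_{\Cubical}(X)$, and it is immediate from $(f^{\sharp}\omega)_{\underline n}(P) = \omega_{\underline n}(f{\circ}P)$ that $f^{\sharp}$ commutes with $\wedge$ and with $\diff$ (the latter since $\diff$ acts via $\diff\circ\omega_{\underline n}$, which is unaffected by precomposition in the plot variable). Hence $f^{\sharp}$ is a cochain map of graded algebras, so it passes to a graded-algebra homomorphism $f^{\ast} : H^{\ast}_{\Cubical}(Y) \to H^{\ast}_{\Cubical}(X)$, as already asserted in the preceding Proposition. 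Functoriality is then a one-line check: $(\id_{X})^{\sharp} = \id$ on forms, and $(g{\circ}f)^{\sharp}\omega_{\underline n}(P) = \omega_{\underline n}(g{\circ}f{\circ}P) = f^{\sharp}(g^{\sharp}\omega)_{\underline n}(P)$, so $(g{\circ}f)^{\ast} = f^{\ast}{\circ}g^{\ast}$ on cohomology.

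I do not expect a genuine obstacle here; the only point requiring a modicum of care is that the wedge product and the Leibniz rule are stated in the excerpt only for the "classical" forms of Definition \ref{defn:souriau-structure-4} (implicitly, via Definitions \ref{defn:souriau-structure-2} and \ref{defn:souriau-structure-3}) and not re-proved for the cubical variant, so one must observe that both facts are evaluated plot-by-plot on each $\simp^{n}$ and therefore transport verbatim to $\Omega^{\ast}_{\Cubical}$. Once that is noted, the proof is complete.
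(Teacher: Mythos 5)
Your proposal is correct: the paper in fact states this theorem without proof, treating it as the routine assembly of the plot-wise wedge product, the Leibniz rule, and the functoriality of $f^{\sharp}$ from Definition \ref{defn:cubical-induced-map}, which is exactly what you carry out. Your observation that the algebra structure and Leibniz rule are checked domain-by-domain on each $\simp^{n}$ and hence transport verbatim from the classical setting to $\Omega^{\ast}_{\Cubical}$ is the only point of substance, and you handle it correctly.
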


\section{Homotopy invariance of {\cubicaltxt} de Rham cohomology}

Let $f_{0}, f_{1} : X \to Y$ be homotopic differentiable maps between differentiable spaces $X=(X,\mathcal{E}^{X})$ and $Y=(Y,\mathcal{E}^{Y})$.
Then there is a plot $f : I \to C_{\mathcal{C}}^{\infty}(X,Y)$ with $f(t)=f_{t}$ for $t=0, 1$.
In particular, for any $n$-plot $P : \simp^{n} \to X$, 
$f{\cdot}P : \simp^{n+1} = I{\times}\simp^{n} \xrightarrow{f{\cdot}P} Y$ is an $n{+}1$-plot.
Then, we obtain a homomorphism $D_{f} : \Omega^{p}_{\Cubical}(Y) \to \Omega^{p-1}_{\Cubical}(X)$ as follows:
for any {\cubicaltxt} differential $p$-form $\omega : \mathcal{E}^{Y}_{\Cubical} \to \wedge^{p}_{\Cubical}$ on $Y$, a $p{-}1$-form $D_{f}(\omega) : \mathcal{E}^{X}_{\Cubical} \to \wedge^{p-1}_{\Cubical}$ on $X$ is defined by the following formula.
\par\vskip1ex\noindent\hfil$
\begin{array}{l}\displaystyle
D_{f}(\omega)_{\underline{n}}(P) =\int_{I}\omega_{\underline{n+1}}(f{\cdot}P) : \simp^{n} \to \wedge^{p-1}(T^{*}_{n}),
\\[2ex]
\displaystyle \left[\int_{I}\omega_{\underline{n+1}}(f{\cdot}P)\right](\mathbold{x}) = \underset{i_{2},\cdots,i_{p}}{\textstyle\sum}\int_{0}^{1}a_{i_{2},\cdots,i_{p}}(t,\mathbold{x})\diff{t}{\cdot}\diff{x_{i_{2}}}\wedge\cdots\wedge\diff{x_{i_{p}}},
\end{array}
$\hfil\par\vskip1ex\noindent
where we assume \vspace{-.5ex}$\omega_{\underline{n+1}}(f{\cdot}P)$ $=$ $\underset{i_{2},\cdots,i_{p}}{\textstyle\sum}a_{i_{2},\cdots,i_{p}}(t,\mathbold{x})\diff{t}\wedge\diff{x_{i_{2}}}\wedge\cdots\wedge\diff{x_{i_{p}}} + \underset{i_{1},\cdots,i_{p}}{\textstyle\sum}b_{i_{1},\cdots,i_{p}}(t,\mathbold{x})$ $\diff{x_{i_{1}}}\wedge\cdots\wedge\diff{x_{i_{p}}}$, $(t,\mathbold{x}) \in I{\times}\simp^{n} = \simp^{n+1}$ and $T^{*}_{n+1}=\real\diff{t}{\oplus}\overset{n}{\underset{i=1}{\oplus}}\real\diff{x_{i}}$.\vspace{-.5ex}
\begin{lem}\label{lem:homotopy-invariance}
For any $\omega$, we obtain $\diff{D(\omega)_{\underline{n}}} + D(\diff\omega)_{\underline{n}} = f^{\sharp}_{1}\omega_{\underline{n}}-f^{\sharp}_{0}\omega_{\underline{n}}$.
Thus, if $\diff\omega=0$, then $f^{\sharp}_{0}\omega$ is cohomologous to $f^{\sharp}_{1}\omega$.
\end{lem}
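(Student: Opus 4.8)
The plan is to verify the cochain-homotopy formula $\diff D_f(\omega)_{\underline{n}} + D_f(\diff\omega)_{\underline{n}} = f_1^{\sharp}\omega_{\underline{n}} - f_0^{\sharp}\omega_{\underline{n}}$ by a direct computation on an arbitrary $n$-plot $P : \simp^n \to X$, reducing everything to the classical fibre-integration identity on $I\times\simp^n=\simp^{n+1}$. First I would fix notation: write $\omega_{\underline{n+1}}(f{\cdot}P) = \sum_{i_2<\cdots<i_p} a_{i_2,\cdots,i_p}(t,\mathbold{x})\diff{t}\wedge\diff{x_{i_2}}\wedge\cdots\wedge\diff{x_{i_p}} + \sum_{i_1<\cdots<i_p} b_{i_1,\cdots,i_p}(t,\mathbold{x})\diff{x_{i_1}}\wedge\cdots\wedge\diff{x_{i_p}}$ exactly as in the definition of $D_f$, splitting the pulled-back form into its $\diff{t}$-part and its $\diff{t}$-free part. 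The key observation is that $f{\cdot}P$ is literally a plot on $\simp^{n+1}=I\times\simp^n$, so $\diff$ applied before or after $D_f$ is just the ordinary exterior derivative on a convex domain, and the naturality of $\diff$ (Definition \ref{defn:souriau-structure-3}) lets me compute $(\diff\omega)_{\underline{n+1}}(f{\cdot}P) = \diff(\omega_{\underline{n+1}}(f{\cdot}P))$.

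Next I would carry out the two computations separately. For $\diff D_f(\omega)_{\underline{n}}(P)$, apply the exterior derivative on $\simp^n$ to $\big[\int_0^1 a_{i_2,\cdots,i_p}(t,\mathbold{x})\diff{t}\big]\diff{x_{i_2}}\wedge\cdots\wedge\diff{x_{i_p}}$; differentiating under the integral sign produces $\sum_k \int_0^1 \pder{a_{i_2,\cdots,i_p}}by{x_k}(t,\mathbold{x})\diff{t}\,\diff{x_k}\wedge\diff{x_{i_2}}\wedge\cdots\wedge\diff{x_{i_p}}$. For $D_f(\diff\omega)_{\underline{n}}(P)$, first expand $\diff(\omega_{\underline{n+1}}(f{\cdot}P))$ on $\simp^{n+1}$: the derivative of the $a$-terms contributes $\pder{a}by{t}\diff{t}\wedge\diff{t}\wedge\cdots = 0$ plus $\sum_k \pder{a_{i_2,\cdots,i_p}}by{x_k}\diff{x_k}\wedge\diff{t}\wedge\diff{x_{i_2}}\wedge\cdots$, and the derivative of the $b$-terms contributes $\pder{b}by{t}\diff{t}\wedge\diff{x_{i_1}}\wedge\cdots$ plus $\diff{t}$-free terms that $D_f$ discards. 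Then applying $D_f$ (i.e. integrating the $\diff{t}$-coefficient over $I$ and dropping $\diff{t}$) to $\diff(\omega_{\underline{n+1}}(f{\cdot}P))$ gives two pieces: one from the $a$-terms, which after reordering $\diff{x_k}\wedge\diff{t} = -\diff{t}\wedge\diff{x_k}$ exactly cancels $\diff D_f(\omega)_{\underline{n}}(P)$ up to sign, and one from the $b$-terms, namely $\sum_{i_1<\cdots<i_p}\big[\int_0^1\pder{b_{i_1,\cdots,i_p}}by{t}(t,\mathbold{x})\diff{t}\big]\diff{x_{i_1}}\wedge\cdots\wedge\diff{x_{i_p}}$. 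By the fundamental theorem of calculus this last expression equals $\sum b_{i_1,\cdots,i_p}(1,\mathbold{x})\diff{x_{i_1}}\wedge\cdots - \sum b_{i_1,\cdots,i_p}(0,\mathbold{x})\diff{x_{i_1}}\wedge\cdots$.

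Finally I would identify the two boundary terms with $f_1^{\sharp}\omega_{\underline{n}}(P)$ and $f_0^{\sharp}\omega_{\underline{n}}(P)$. Here I use that $(f{\cdot}P)\circ\partial^{\epsilon}_1 = f_\epsilon{\circ}P$ as plots $\simp^n\to Y$ for $\epsilon = 0,1$ (the face $\partial^\epsilon_1$ sends $\mathbold{x}\in\simp^n$ to $(\epsilon,\mathbold{x})\in\simp^{n+1}$, and $f{\cdot}P$ evaluated there is $f(\epsilon)(P(\mathbold{x})) = f_\epsilon(P(\mathbold{x}))$), together with naturality of $\omega$: $\omega_{\underline{n}}(f_\epsilon{\circ}P) = \omega_{\underline{n}}((f{\cdot}P)\circ\partial^\epsilon_1) = (\partial^\epsilon_1)^{*}\big(\omega_{\underline{n+1}}(f{\cdot}P)\big)$. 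Since $(\partial^\epsilon_1)^{*}$ sets $t = \epsilon$ and kills $\diff{t}$, it sends $\omega_{\underline{n+1}}(f{\cdot}P)$ precisely to $\sum b_{i_1,\cdots,i_p}(\epsilon,\mathbold{x})\diff{x_{i_1}}\wedge\cdots\wedge\diff{x_{i_p}}$, which matches the boundary term above; and $\omega_{\underline{n}}(f_\epsilon{\circ}P) = (f_\epsilon^{\sharp}\omega)_{\underline{n}}(P)$ by Definition \ref{defn:cubical-induced-map}. Assembling the cancellation and these identifications yields the formula for every $n$ and every $P$, hence the identity of natural transformations; the second sentence of the lemma is then immediate, since $\diff\omega = 0$ forces $f_1^{\sharp}\omega - f_0^{\sharp}\omega = \diff(D_f(\omega))$. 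The only delicate points are bookkeeping the signs when commuting $\diff{t}$ past $\diff{x_k}$ and the legitimacy of differentiating under the integral sign, which is routine because all coefficient functions are smooth on the compact domain $I\times\simp^n$; I expect the sign bookkeeping in the $a$-term cancellation to be the main place where care is needed.
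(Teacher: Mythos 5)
Your proposal is correct and follows essentially the same route as the paper's proof: the same splitting of $\omega_{\underline{n+1}}(f{\cdot}P)$ into its $\diff{t}$-part and $\diff{t}$-free part, the same cancellation of the $a$-terms between $\diff D_f(\omega)$ and $D_f(\diff\omega)$, the fundamental theorem of calculus on the $b$-terms, and the identification of the boundary terms with $f_\epsilon^{\sharp}\omega$ via pullback along the inclusion at $t=\epsilon$ (your $\partial^{\epsilon}_{1}$ is the paper's $\incl_{\epsilon}$). No gaps.
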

\begin{proof}
First, let $\omega_{\underline{n+1}}(f{\cdot}P) = \underset{i_{2},\cdots,i_{p}}{\textstyle\sum}a_{i_{2},\cdots,i_{p}}(t,\mathbold{x})\diff{t}\wedge\diff{x_{i_{2}}}\wedge\cdots\wedge\diff{x_{i_{p}}} + \underset{i_{1},\cdots,i_{p}}{\textstyle\sum}b_{i_{1},\cdots,i_{p}}(t,\mathbold{x})$ $\diff{x_{i_{1}}}\wedge\cdots\wedge\diff{x_{i_{p}}}$.
Let $\incl_{t} : \simp^{n} \to I{\times}\simp^{n}$ be the inclusion defined by $\incl_{t}(\mathbold{x})=(t,\mathbold{x})$ for $t=0, 1$.
Since $(f{\cdot}P){\circ}\incl_{t} = f_{t}{\circ}P$ for $t=0, 1$, we have $(f^{\sharp}_{t}\omega_{\underline{n}})(P) = \omega_{\underline{n}}(f_{t}{\circ}P) = \omega_{\underline{n}}((f{\cdot}P){\circ}\incl_{t})$ $=$ $\incl_{t}^{\ast}\,\omega_{\underline{n+1}}(f{\cdot}P)$ $=$ $\underset{i_{1},\cdots,i_{p}}{\textstyle\sum}b_{i_{1},\cdots,i_{p}}(t,\mathbold{x}) \diff{x_{i_{1}}}\wedge\cdots\wedge\diff{x_{i_{p}}}$ for $t=0, 1$, $\mathbold{x} \in \simp^{n}$.

Second, by definition, we have
$\diff\omega_{\underline{n+1}}(f{\cdot}P)$ $=$ $\displaystyle\underset{i}{\textstyle\sum}\underset{i_{2},\cdots,i_{p}}{\textstyle\sum}\pder{a_{i_{2},\cdots,i_{p}}}by{x_{i}}(t,\mathbold{x})$ $\diff{x_{i}}\wedge\diff{t}\wedge\diff{x_{i_{2}}}\wedge\cdots\wedge\diff{x_{i_{p}}} + \!\!\!\underset{i_{1},\cdots,i_{p}}{\textstyle\sum}\displaystyle\pder{b_{i_{1},\cdots,i_{p}}}by{t}(t,\mathbold{x}) \displaystyle\diff{t}\wedge\diff{x_{i_{1}}}\wedge\cdots\wedge\diff{x_{i_{p}}} + \underset{i}{\textstyle\sum}\underset{i_{1},\cdots,i_{p}}{\textstyle\sum}\displaystyle\pder{b_{i_{1},\cdots,i_{p}}}by{x_{i}}(t,\mathbold{x}) \diff{x_{i}}\wedge\diff{x_{i_{1}}}\wedge\cdots\wedge\diff{x_{i_{p}}}$, 
and hence we obtain 
$D(\diff\omega)_{\underline{n}}(P) = -\displaystyle\underset{i}{\textstyle\sum}\underset{i_{2},\cdots,i_{p}}{\textstyle\sum}\int_{I}\pder{a_{i_{2},\cdots,i_{p}}}by{x_{i}}(t,\mathbold{x})\diff{t}{\cdot}\diff{x_{i}}\wedge\diff{x_{i_{2}}}\wedge\cdots\wedge\diff{x_{i_{p}}} + \underset{i_{1},\cdots,i_{p}}{\textstyle\sum}\displaystyle\int_{I}\pder{b_{i_{1},\cdots,i_{p}}}by{t}(t,\mathbold{x})\diff{t}{\cdot}\diff{x_{i_{1}}}\wedge\cdots\wedge\diff{x_{i_{p}}}$, $(t,\mathbold{x}) \in I{\times}\simp^{n}$.

Third, we have $D_{f}(\omega)_{\underline{n}}(P) = \displaystyle\underset{i_{2},\cdots,i_{p}}{\textstyle\sum}\int_{I}a_{i_{2},\cdots,i_{p}}(t,\mathbold{x})\diff{t}{\cdot}\diff{x_{i_{2}}}\wedge\cdots\wedge\diff{x_{i_{p}}}$, and hence we obtain 
$\diff{D_{f}(\omega)_{\underline{n}}}(P) = \displaystyle\underset{i}{\textstyle\sum}\underset{i_{2},\cdots,i_{p}}{\textstyle\sum}\int_{I}\pder{a_{i_{2},\cdots,i_{p}}}by{x_{i}}(t,\mathbold{x})\diff{t}{\cdot}\diff{x_{i}}\wedge\diff{x_{i_{2}}}\wedge\cdots\wedge\diff{x_{i_{p}}}$, $(t,\mathbold{x}) \in I{\times}\simp^{n}$.

Hence $\left[\diff{D_{f}(\omega)_{\underline{n}}}(P) + D_{f}(\diff\omega)_{\underline{n}}(P)\right](\mathbold{x}) = \underset{i_{1},\cdots,i_{p}}{\textstyle\sum}\displaystyle\int_{I}\pder{b_{i_{1},\cdots,i_{p}}}by{t}(t,\mathbold{x})\diff{t}{\cdot}\diff{x_{i_{1}}}\wedge\cdots\wedge\diff{x_{i_{p}}} = \underset{i_{1},\cdots,i_{p}}{\textstyle\sum} b_{i_{1},\cdots,i_{p}}(1,\mathbold{x}) \diff{x_{i_{1}}}\wedge\cdots\wedge\diff{x_{i_{p}}} - \underset{i_{1},\cdots,i_{p}}{\textstyle\sum} b_{i_{1},\cdots,i_{p}}(0,\mathbold{x}) \diff{x_{i_{1}}}\wedge\cdots\wedge\diff{x_{i_{p}}}$, $\mathbold{x} \in \simp^{n}$.
Thus we obtain $\diff{D_{f}(\omega)_{\underline{n}}}(P) + D_{f}(\diff\omega)_{\underline{n}}(P) = (f^{\sharp}_{1}\omega_{\underline{n}})(P) - (f^{\sharp}_{0}\omega_{\underline{n}})(P)$, which implies the lemma.
\end{proof}

It immediately implies the following theorem.

\begin{thm}\label{thm:homotopy-invariance}
If two differentiable maps $f_{0}, \ f_{1} : X \to Y$ between differentiable spaces are homotopic in $C_{\mathcal{C}}^{\infty}(X,Y)$, then they induce the same homomorphism 
\par\noindent\hfil
$f^{*}_{0} = f^{*}_{1} : H_{\Cubical}^{\ast}(Y) \to H_{\Cubical}^{\ast}(X)$.
\end{thm}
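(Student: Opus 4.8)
The plan is to deduce Theorem~\ref{thm:homotopy-invariance} directly from Lemma~\ref{lem:homotopy-invariance} by the standard chain-homotopy argument. A homotopy between $f_0$ and $f_1$ in $C_{\mathcal{C}}^{\infty}(X,Y)$ is, by definition, a plot $f : I \to C_{\mathcal{C}}^{\infty}(X,Y)$ with $f(0)=f_0$ and $f(1)=f_1$; this is exactly the data used to build the operator $D_f : \Omega^{p}_{\Cubical}(Y) \to \Omega^{p-1}_{\Cubical}(X)$ discussed just before the lemma. So the theorem is essentially a formal corollary once we have the chain-homotopy identity.

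First I would assemble the identity of Lemma~\ref{lem:homotopy-invariance}, $\diff{D_f(\omega)_{\underline{n}}} + D_f(\diff\omega)_{\underline{n}} = f_1^{\sharp}\omega_{\underline{n}} - f_0^{\sharp}\omega_{\underline{n}}$, into an identity of natural transformations: since it holds for every $n$ and every $n$-plot $P$, it reads $\diff{\circ}D_f + D_f{\circ}\diff = f_1^{\sharp} - f_0^{\sharp}$ as maps $\Omega^{\ast}_{\Cubical}(Y) \to \Omega^{\ast}_{\Cubical}(X)$. (One should check that $D_f$ is compatible with the natural-transformation structure, i.e. really lands in $\Omega^{p-1}_{\Cubical}(X)$; this is the content of the displayed construction of $D_f$ and I would simply invoke it.) Next, take a cocycle $\omega \in Z^{p}_{\Cubical}(Y)$, so $\diff\omega = 0$. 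Then the identity collapses to $f_1^{\sharp}\omega - f_0^{\sharp}\omega = \diff{(D_f\omega)}$, so $f_0^{\sharp}\omega$ and $f_1^{\sharp}\omega$ differ by a coboundary in $B^{p}_{\Cubical}(X)$, hence represent the same class in $H^{p}_{\Cubical}(X)$. Since $f_t^{\sharp}$ is a cochain map (it commutes with $\diff$ by Definition~\ref{defn:cubical-induced-map} and the definition of the external derivative on cubical forms), it induces $f_t^{*}$ on cohomology, and we have just shown $f_0^{*}[\omega] = f_1^{*}[\omega]$ for every class $[\omega] \in H^{p}_{\Cubical}(Y)$, in every degree $p$. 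Therefore $f_0^{*} = f_1^{*} : H^{\ast}_{\Cubical}(Y) \to H^{\ast}_{\Cubical}(X)$, which is the assertion.

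There is essentially no obstacle here: the genuinely substantive work — verifying the Poincar\'e-type identity $\diff D + D\diff = f_1^{\sharp} - f_0^{\sharp}$ by the explicit integration-over-$I$ computation and the fundamental theorem of calculus — has already been carried out in Lemma~\ref{lem:homotopy-invariance}. The only point requiring a word of care is the passage from "for each $n$ and each plot $P$" to "as a map of cochain complexes", together with the remark that homotopy of differentiable maps is by hypothesis witnessed by precisely such a plot $f : I \to C_{\mathcal{C}}^{\infty}(X,Y)$, so that the operator $D_f$ is available. If one wanted to be thorough one could also note that the conclusion is independent of the chosen homotopy $f$, but that is not needed for the statement as phrased. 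I would write the proof in three or four lines: invoke the lemma, specialize to $\diff\omega=0$, conclude $f_1^{\sharp}\omega - f_0^{\sharp}\omega \in B^{p}_{\Cubical}(X)$, and pass to cohomology.
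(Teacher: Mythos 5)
Your proposal is correct and is exactly the paper's argument: the paper derives Theorem~\ref{thm:homotopy-invariance} from Lemma~\ref{lem:homotopy-invariance} with the single remark ``It immediately implies the following theorem,'' and the chain-homotopy specialization to $\diff\omega=0$ that you spell out is precisely what is meant. No discrepancy to report.
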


\section{Hurewicz homomorphism}

First, we give a definition of paths and fundamental groupoid of a differentiable space.

\begin{defn}
In this paper, a path from $a \in X$ to $b \in X$ in a differentiable space $X$ means a differentiable map $\ell : I \to X$ such that $\ell(0)=a$ and $\ell(1)=b$.
We denote by $\pi_{0}(X)$ the set of path-connected components of $X$, as usual.
\end{defn}

\begin{defn}
Let $\cats$ be the category of all small categories.
The fundamental groupoid functor $\underline{\pi}_{1} : \differentiable \to \cats$ is as follows: 
\begin{enumerate}
\item
For a differentiable space $X$, the small category $\underline{\pi}_{1}(X)$ is defined by
$\Object{\underline{\pi}_{1}(X)} = X$ \ and \ 
$\Morphism{\underline{\pi}_{1}(X)}(x_{0},x_{1})$ is the set of homotopy classes of all differentiable maps $\ell : I \to X$ with $\ell(0)=x_{0}$ and $\ell(1)=x_{1}$ for any $x_{0}, x_{1} \in X$.
\vspace{.5ex}
\item
For a differentitable map $f : Y \to X$, the functor $f_{*} : \underline{\pi}_{1}(Y) \to \underline{\pi}_{1}(X)$ is defined by
$f_{*} = f : Y \to X$ \ and \ 
$f_{*}([\ell]) = [f{\circ}\ell]$ for any $[\ell] \in \underline{\pi}_{1}(Y)$.
\end{enumerate}
\end{defn}

\begin{defn}
The functor $\underline{\real} : \differentiable \to \cats$ is defined as follows:
\begin{enumerate}
\item
For a differentiable space $X$, the small category $\underline{\real}(X)$ is defined by
$\Object{\underline{\real}(X)} = X$ \ and \ 
$\Morphism{\underline{\real}(X)}(x_{0},x_{1}) = \real$\, for any $x_{0}, x_{1} \in X$, and the composition is given by addition of real numbers.
\vspace{.5ex}
\item
For a differentitable map $f : Y \to X$, the functor $f_{*} : \underline{\real}(Y) \to \underline{\real}(X)$ is defined by
$f_{*} = f : Y \to X$ \ and \ 
$f_{*} = \id : \real \to \real$.
\end{enumerate}
\end{defn}

\begin{defn}\label{defn:hurewicz_homomorphism}
The Hurewicz homomorphism $\rho : Z_{\Cubical}^{1}(X) \to \Hom(\underline{\pi}_{1}(X),\underline{\real}(X))$ (the set of functors) is defined for any $\omega \in Z_{\Cubical}^{1}(X)$ by \vspace{1ex}
$\rho(\omega)(x)=x$ for any $x \in \Object{\underline{\pi}_{1}(X)}=X$ and 
$\displaystyle\rho(\omega)([\ell]) = \int_{I}\omega_{\underline{1}}(\ell)$ for any $[\ell] \in \Morphism{\underline{\pi}_{1}(X)}$,\vspace{1ex}
which is natural, in other words, the diagram below is commutative for any differentiable map $f : Y \to X$ between differentiable spaces. 
\par\vskip1ex\noindent
\hfil$
\begin{diagram}
\node{Z_{\Cubical}^{1}(X)}
\arrow{e,t}{\rho}
\arrow{s,l}{f^{*}}
\node{\Hom(\underline{\pi}_{1}(X),\underline{\real}(X))}
\arrow{s,r}{\Hom(f_{*},\id)}
\\
\node{Z_{\Cubical}^{1}(Y)}
\arrow{e,t}{\rho}
\node{\Hom(\underline{\pi}_{1}(Y),\underline{\real}(Y))}
\end{diagram}
$\hfil
\par\vskip1ex\noindent
(well-defined)
Let $\ell_{0} \sim \ell_{1}$ with $\ell_{t}(\epsilon)=x_{\epsilon} \in X$, $t=0,1$ and $\epsilon=0,1$.
Then there is a $2$-plot $\hat\ell : \simp^{2} \to X$ such that $\hat\ell(\epsilon,s)=\ell_{\epsilon}(s)$ and $\hat\ell(t,\epsilon)=x_{\epsilon}$ for $\epsilon=0,1$.
Hence we have $\hat\ell{\circ}\partial^{\epsilon}_{1}=\ell_{\epsilon}$, $\epsilon=0,1$ and $\hat\ell{\circ}\partial^{\epsilon}_{2}=c_{x_{\epsilon}}=c_{x_{\epsilon}}{\circ}\varepsilon_{1}$.
Let $\omega_{\underline{2}}(\hat\ell) = a(t,s)\diff{t} + b(t,s)\diff{s} \in \wedge^{1}(\simp^{2})$.
Then we have $\omega_{\underline{2}}(\ell_{\epsilon})=\omega_{\underline{2}}(\hat\ell{\circ}\partial^{\epsilon}_{1})=\partial_{1}^{\epsilon*}\omega_{\underline{2}}(\hat\ell)=b(\epsilon,s)\diff{s}$, $\epsilon=0,1$.
Similarly, $0=\varepsilon_{1}^{*}\omega_{\ast}(c_{x_{\epsilon}})=\omega_{\underline{2}}(c_{x_{\epsilon}}{\circ}\varepsilon_{1})=\omega_{\underline{2}}(\hat\ell{\circ}\partial^{\epsilon}_{2})=\partial_{2}^{\epsilon*}\omega_{\underline{2}}(\hat\ell)=a(t,\epsilon)\diff{t}$ which implies $a(t,\epsilon)=0$, $\epsilon=0,1$.
On the other hand by Green's formula, \vspace{1ex}we obtain that $\displaystyle\int_{\partial\,\simp^{2}}(\omega_{\underline{2}}(\hat\ell)\vert_{\partial\,\simp^{2}})=\int_{\simp^{2}}\diff\omega=0$, since $\omega$ is a closed form.\vspace{.5ex}
Then it follows that $\displaystyle\int_{\{1\}{\times}I}(\omega_{\underline{2}}(\hat\ell)\vert_{\{1\}{\times}I})-\displaystyle\int_{\{0\}{\times}I}(\omega_{\underline{2}}(\hat\ell)\vert_{\{0\}{\times}I})=0$, and hence \vspace{1ex}$\displaystyle\int_{I}\omega_{\underline{1}}(\ell_{1})=\int_{I}\omega_{\underline{1}}(\ell_{0})$, and $\rho$ is well-defined.
The additivity of $\rho$ is clear by definition.
\par(naturality)
Let $f : Y \to X$ be a differentiable map.
Then $f$ induces both $f^{*} : Z_{\Cubical}^{1}(X) \to Z_{\Cubical}^{1}(Y)$ and $f_{*} : \underline{\pi}_{1}(Y) \to \underline{\pi}_{1}(X)$.
The latter homomorphism induces 
\par\vskip1ex\noindent\hfil
$\Hom(f_{*},\id) : \Hom(\underline{\pi}_{1}(X),\underline{\real}(X)) \to \Hom(\underline{\pi}_{1}(Y),\underline{\real}(Y))$.\hfil
\par\vskip1ex\noindent
Then, for any $\omega \in Z_{\Cubical}^{1}(X)$ and $[\ell] \in \underline{\pi}_{1}(X)$, it follows that 
\par\vskip1ex\noindent\hfil
$\displaystyle\rho(f^{*}(\omega))([\ell]) = \int_{I}(f^{\sharp}\omega_{\underline{1}})(\ell)=\int_{I}\omega_{\underline{1}}(f{\circ}\ell) 
= \rho(\omega)([f{\circ}\ell]) = \rho(\omega){\circ}f_{*}([\ell])$
\hfil\par\vskip1ex\noindent
and hence we have $\rho{\circ}f^{*}=\Hom(f_{*},\id){\circ}\rho$ which implies the naturality of $\rho$.
\end{defn}

\begin{defn}
For any differentiable space $X$, we define a groupoid $\underline{X}$ in which the set of objects is equal to $X = \Object{\underline{\pi}_{1}(X)}$, and the set of morphisms is obtained from $\Morphism{\underline{\pi}_{1}(X)}$ by identifying all the morphisms which have starting and ending objects in common.
\end{defn}

Then there clearly is a natural projection $\proj : \underline{\pi}_{1}(X) \to \underline{X}$ inducing a monomorphism $\proj^{*} : \Hom(\underline{X},\underline{\real}(X)) \hookrightarrow \Hom(\underline{\pi}_{1}(X),\underline{\real}(X))$.
\begin{defn}
We denote the cokernel of $\proj^{*}$ by $\Hom(\underline{\pi}_{1}(X),\real)$.
\end{defn}

If $\omega=\diff{\phi}$ for some $\phi \in \Omega_{\Cubical}^{0}(X)$, then, for any path $\ell$ from $x_{0}$ to $x_{1}$, \vspace{.2ex}we have $\rho(\omega)([\ell])=\rho(\diff{\phi})([\ell]) = \displaystyle\int_{I}\diff{(\phi_{I})}(\ell) = [\phi_{I}(\ell)(t)]_{t=0}^{t=1}=\phi_{I}(\ell)(1)-\phi_{I}(\ell)(0)$, \vspace{.5ex}by the fundamental theorem of calculus.
Hence $\phi_{I}(\ell)(\epsilon)=\phi_{I}(\ell)(\partial^{\epsilon}_{1}(\ast)) = \partial_{1}^{\epsilon\ast}(\phi_{I}(\ell))(\ast) = \phi_{\{\ast\}}(\ell{\circ}\partial^{\epsilon}_{1})(\ast) = \phi_{\{\ast\}}(\ell(\epsilon))(\ast) = \phi_{\{\ast\}}(c_{x_{\epsilon}})(\ast)$ is depending only on $x_{\epsilon}$ the starting and ending objects of $[\ell] \in \underline{\pi}_{1}(X)$. 
Thus the functor $\rho(\omega) : \underline{\pi}_{1}(X) \to \underline{\real}(X)$ induces a functor $\Phi(\omega) : \underline{X} \to \underline{\real}(X)$ such that $\rho(\omega)=\Phi(\omega){\circ}\proj$, in other words, $\rho(B^{1}_{\Cubical}(X))$ is in the image of $\proj^{*}$.
Thus $\rho$ induces a homomorphism $\rho_{*} : H^{\ast}_{\Cubical}(X) \to \Hom(\underline{\pi}_{1}(X),\real)$.

\begin{thm}\label{thm:hurewicz-mono}
$\rho_{*} : H^{1}_{\Cubical}(X) \to \Hom(\underline{\pi}_{1}(X),\real)$ is a monomorphism.
\end{thm}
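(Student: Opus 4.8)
The plan is to verify that $\rho_{*}$ has trivial kernel. So suppose $\omega\in Z^{1}_{\Cubical}(X)$ with $\rho_{*}[\omega]=0$; I shall produce $\phi\in\Omega^{0}_{\Cubical}(X)$ with $\diff\phi=\omega$, so that $[\omega]=0$ in $H^{1}_{\Cubical}(X)$. The hypothesis says exactly that $\rho(\omega)$ lies in the image of $\proj^{*}$, i.e. $\rho(\omega)=\Psi{\circ}\proj$ for a functor $\Psi:\underline{X}\to\underline{\real}(X)$ which is the identity on objects. Since in $\underline{X}$ there is at most one morphism between any two objects and the composition law of $\underline{\real}(X)$ is addition, this means that $\int_{I}\omega_{\underline{1}}(\ell)$ depends only on the endpoints $\ell(0),\ell(1)$ and is additive along composable paths. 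Hence, choosing a base point $x_{C}$ in each path component $C$ of $X$, the rule $\phi(x)=\int_{I}\omega_{\underline{1}}(\ell_{x})$, for any differentiable path $\ell_{x}$ from $x_{C}$ to $x$, gives a well-defined function $\phi:X\to\real$, and by functoriality of $\Psi$ it satisfies $\phi(b)-\phi(a)=\int_{I}\omega_{\underline{1}}(\ell)$ for every differentiable path $\ell$ from $a$ to $b$.

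The key point is a rigidity statement turning these path integrals into an exterior derivative. Let $P:A\to X$ be any plot over a convex domain $A$. Then $P^{\sharp}\omega$ (given by $(P^{\sharp}\omega)_{\underline{k}}(R)=\omega_{\underline{k}}(P{\circ}R)$) is a cubical $1$-form on $A$, and it is closed because $\omega$ is. As $A$ is differentiably contractible, Theorem \ref{thm:homotopy-invariance} together with Example \ref{expl:one-point-set-cube} yields $H^{1}_{\Cubical}(A)=0$, so $P^{\sharp}\omega=\diff g_{P}$ for some $g_{P}\in\Omega^{0}_{\Cubical}(A)=\Omega^{0}_{\mathcal{C}}(A)\cong C^{\infty}(A,\real)$, unique up to an additive constant. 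In particular $\omega_{\underline{n}}(P)=\diff g_{P}$ as a $1$-form on $A$ when $A=\simp^{n}$, and for every smooth curve $\gamma:\simp^{1}\to A$ one has $\omega_{\underline{1}}(P{\circ}\gamma)=(P^{\sharp}\omega)_{\underline{1}}(\gamma)=\diff(g_{P}{\circ}\gamma)=\gamma^{*}(\diff g_{P})$; that is, although a general cubical form is natural only for the structure maps $\partial^{\epsilon}_{i},\varepsilon_{i}$ of $\Cubical$, a \emph{closed} one behaves naturally under arbitrary smooth reparametrisations of cubes.

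Now combine the two ingredients. Fix a plot $P:A\to X$ and points $a_{0},a_{1}\in A$; convexity lets us join them by the segment $\sigma(u)=a_{0}+u(a_{1}-a_{0})$, a smooth curve in $A$, and then
\par\noindent\hfil$\displaystyle\phi(P(a_{1}))-\phi(P(a_{0}))=\int_{I}\omega_{\underline{1}}(P{\circ}\sigma)=\int_{I}\diff(g_{P}{\circ}\sigma)=g_{P}(a_{1})-g_{P}(a_{0})$\hfil\par\noindent
by the defining property of $\phi$, the rigidity above, and the fundamental theorem of calculus. Hence $\phi{\circ}P-g_{P}$ is constant on the connected set $A$, so $\phi{\circ}P$ is smooth; as $P$ ranges over all plots this shows $\phi$ is differentiable, i.e. $\phi\in C^{\infty}(X,\real)\cong\Omega^{0}_{\Cubical}(X)$. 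Finally, for an $n$-plot $P:\simp^{n}\to X$ we get $(\diff\phi)_{\underline{n}}(P)=\diff(\phi{\circ}P)=\diff g_{P}=\omega_{\underline{n}}(P)$, so $\diff\phi=\omega$ and $[\omega]=0$. Therefore $\rho_{*}:H^{1}_{\Cubical}(X)\to\Hom(\underline{\pi}_{1}(X),\real)$ is a monomorphism.

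I expect the rigidity step of the second paragraph to be the crux: it is the only place where closedness of $\omega$ is used in an essential way (general cubical forms genuinely fail to be natural under all smooth maps of cubes), and it rests on the homotopy invariance of cubical de Rham cohomology and the computation $H^{*}_{\Cubical}(A)\cong H^{*}_{\Cubical}(\{\ast\})$ for convex $A$. The remaining points — the well-definedness of $\phi$ coming from $\rho(\omega)=\Psi{\circ}\proj$, the identifications $\Omega^{0}_{\Cubical}\cong C^{\infty}(-,\real)$, and the identification of plots over $\simp^{n}$ with differentiable maps $\simp^{n}\to X$ — are routine and already available in the text.
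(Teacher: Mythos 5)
Your argument is correct and, at bottom, is the same construction as the paper's: from the factorization $\rho(\omega)=\Psi{\circ}\proj$ one builds a primitive $0$-form by integrating $\omega$ along paths, first from a basepoint of each path component to $P(\mathbold{0})$ and then inside the cube; your $\phi{\circ}P$ is exactly the paper's $F(P)$. Where you genuinely add something is in isolating and justifying the step the paper uses silently: that for a \emph{closed} cubical $1$-form one has $\omega_{\underline{1}}(P{\circ}\gamma)=\gamma^{*}\omega_{\underline{n}}(P)$ for an arbitrary smooth curve $\gamma$ in $\simp^{n}$, even though $\gamma$ is not a structure map of $\Cubical$. The paper needs this identity too --- without it one cannot verify that $F$ is natural with respect to faces and degeneracies (the basepoint term changes when a face map moves the origin), nor equate $\int_{I}\omega_{\underline{1}}(\ell)$ with line integrals of $\omega_{\underline{n}}(P)$ --- but it is never stated there. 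Your derivation of it from smooth contractibility of convex domains, Theorem \ref{thm:homotopy-invariance} and Example \ref{expl:one-point-set-cube}, i.e.\ from $H^{1}_{\Cubical}(A)=0$, is a clean way to fill that gap, and writing the primitive globally as $\phi:X\to\real$ makes the naturality of the resulting $0$-form automatic (at the cost of checking smoothness of $\phi{\circ}P$, which your rigidity step supplies). Two caveats, both shared with the paper rather than introduced by you: the identification $\Omega^{0}_{\Cubical}(A)\cong C^{\infty}(A,\real)$, which your rigidity step uses to treat the cubical primitive $g_{P}$ of $P^{\sharp}\omega$ as a function of points, is asserted in the paper (in the proof that $\Cube^{\ast}$ is monic) but not proved; and the additivity of $\rho(\omega)$ over concatenations, used for $\phi(b)-\phi(a)=\int_{I}\omega_{\underline{1}}(\ell)$, requires the usual smoothing of paths at junctions. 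Modulo those shared points, your proof is complete.
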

\begin{proof}
Assume that $\rho_{*}([\omega])=0$.
Then we have $\rho(\omega) \in \Im{\proj^{*}}$.
Thus there is a functor $\Phi(\omega) : \underline{X} \to \underline{\real}$ such that $\rho(\omega)=\Phi(\omega){\circ}\proj$.
Let $\{x_{\alpha}\,;\,\alpha\!\in\!\pi_{0}(X)\}$ be a complete set of representatives of $\pi_{0}(X)$.
For any $P \in \mathcal{E}(\simp^{n})$, a map $F(P) : \simp^{n} \to \real$ is given by 
\par\vskip.7ex\noindent\hfil
$F(P)(\mathbold{x})$ $=$ $\displaystyle \int_{I}\omega_{\underline{1}}(\ell_{x})+ \int_{I}\gamma^{*}_{\mathbold{x}}\omega_{\underline{n}}(\simp^{n})$, $x\!=\!P(\mathbold{0})$, 
\hfil\par\vskip1ex\noindent
where $\ell_{x}$ is a path from $x_{\alpha}$, $\alpha\!=\![x]\!\in\!\pi_{0}(X)$, \vspace{.5ex}to $x$ in $X$ and $\gamma$ is a path from $\mathbold{0}$ to $\mathbold{x}$ in $\simp^{n}$.
Then $F(P) : \simp^{n} \to \wedge^{0}$ is well-defined smooth map by the equality \vspace{.5ex}$\displaystyle \int_{I}\omega_{\underline{1}}(\ell_{x}) = \rho(\omega)([\ell_{x}]) = \Phi(\omega)(\proj([\ell_{x}]))$ \vspace{.5ex}which is not depending on the choice of $\ell_{x}$, and hence it gives a $0$-form $F : \mathcal{E}(\simp^{n}) \to \wedge^{0}(\simp^{n})$ so that $\diff{F}=\omega$.
Thus $[\omega]=0$ and $\rho_{*}$ is a monomorphism.
\end{proof}

\section{Partition of unity}

Let $X$ be a differentiable space.
In this section, we assume that there are subsets $A, B \subset X$ such that \,$\mathcal{U} = \{\Int{A},\Int{B}\}$ gives an open covering of $X$.

\begin{defn}
A pair $(\rho^{A},\rho^{B})$ of differentiable $0$-forms $\rho^{A}$ and $\rho^{B}$ is called a partition of unity belonging to an open covering \,$\mathcal{U}$ of $X$, if, for any plot $P : \simp^{n} \to X$, $\Supp{\rho_{\underline{n}}^{A}(P)} \subset P^{-1}(\Int{A})$, $\Supp{\rho_{\underline{n}}^{B}(P)} \subset P^{-1}(\Int{B})$ and $\rho_{\underline{n}}^{A}(P)+\rho_{\underline{n}}^{B}(P)=1$ on $\simp^{n}$.
\end{defn}

To obtain a well-defined smooth function by extending or gluing smooth functions on {\cubictxt}s, we use a fixed smooth stabilizer function $\hat\lambda : \real \to I$ (see \cite{Iglesias:13}) which satisfies 
\begin{enumerate}
\item
$\hat\lambda(-t)=0$, $\hat\lambda(1{+}t)=1$, $t \geq 0$ \quad and 
\hitem
$\hat\lambda$ is strictly increasing on $I=[0,1]$.
\end{enumerate}
Using $\hat\lambda$, we define a smooth function $\lambda_{a,b} : I \to I$, for any $a,b \in \real$ with $a\!<\!b$, by $$\textstyle\lambda_{a,b}(t) = \hat\lambda(\frac{t{-}a{-}\epsilon}{b{-}a{-}2\epsilon})$$ for a small $\epsilon\!>\!0$ enough to satisfy $\frac{b{-}a}{2}\!>\!\epsilon\!>\!0$.

Using it, we show the existence of a partition of unity as follows.

\begin{thm}\label{thm:partition_of_unity}
Let $X$ be a differentiable space with an open covering $\{\Int{A},\Int{B}\}$, $A, B \subset X$.
Then there exists a partition of unity $\mathbold\rho=\{\rho^{A},\rho^{B}\}$ belonging to $\{\Int{A},\Int{B}\}$.
If the underlying topology on $X$ is normal, $\mathbold\rho$ can be chosen as normal, in other words, there are closed sets $G_{A}$, $G_{B}$ in $X$ such that $X\!\smallsetminus\!\Int{B} \subset G_{A} \subset \Int{A}$, $X\!\smallsetminus\!\Int{A} \subset G_{B} \subset \Int{B}$ and $\Supp{\rho^{A}_{\underline{n}}(P)} \subset P^{-1}(G_{A})$ and $\Supp{\rho^{B}_{\underline{n}}(P)} \subset P^{-1}(G_{B})$ for all $n \!\geq\! 0$ and $P \in \mathcal{E}^{X}(\simp^{n})$.\vspace{-.5ex}
\end{thm}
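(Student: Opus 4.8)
\textit{Plan.}
The plan is to produce $\rho^{A}$ and $\rho^{B}$ together by constructing, for every $n$-plot $P\in\mathcal{E}^{X}(\simp^{n})$, a smooth function $h_{\underline n}(P)\colon\simp^{n}\to I$ which is $\equiv 0$ on an open neighbourhood of $P^{-1}(X\smallsetminus\Int A)$ and $\equiv 1$ on an open neighbourhood of $P^{-1}(X\smallsetminus\Int B)$, and which is natural in $P$ with respect to the faces $\partial^{\epsilon}_{i}$ and degeneracies $\varepsilon_{i}$ of $\Cubical$; then I set $\rho^{A}:=h$ and $\rho^{B}:=1-h$. Indeed $\{h_{\underline n}(P)\neq 0\}$ has closure inside the complement of the first neighbourhood, so $\Supp\rho^{A}_{\underline n}(P)\subset P^{-1}(\Int A)$, and symmetrically $\Supp\rho^{B}_{\underline n}(P)=\Supp(1-h_{\underline n}(P))\subset P^{-1}(\Int B)$, while $\rho^{A}_{\underline n}(P)+\rho^{B}_{\underline n}(P)=1$ on $\simp^{n}$. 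The feature that makes this possible is that, since $\Int A\cup\Int B=X$, the sets $P^{-1}(X\smallsetminus\Int A)$ and $P^{-1}(X\smallsetminus\Int B)$ are \emph{disjoint} closed subsets of the compact cube $\simp^{n}$, so on a single cube such an $h$ exists by the usual smooth bump functions on $\real^{n}$; the whole content is to make the choice cohere with the cubical structure of Definition~\ref{defn:cubic-category}.

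I would build $h_{\underline n}(P)$ by induction on $n$. For $n=0$ a plot is a constant $c_{x}$, and I set $h_{\underline 0}(c_{x})=0$ if $x\notin\Int A$ and $h_{\underline 0}(c_{x})=1$ otherwise, which is unambiguous because $x\notin\Int A$ forces $x\in\Int B$. For the inductive step, given $P\colon\simp^{n}\to X$, the $2n$ restrictions $h_{\underline{n-1}}(P{\circ}\partial^{\epsilon}_{i})$ agree on the codimension-two faces of $\simp^{n}$ by the cubical relations of Definition~\ref{defn:cubic-category} together with the inductive hypothesis, hence glue to a smooth $g$ on $\partial\simp^{n}$ that is $0$ near $\partial\simp^{n}\cap P^{-1}(X\smallsetminus\Int A)$ and $1$ near $\partial\simp^{n}\cap P^{-1}(X\smallsetminus\Int B)$. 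I then extend $g$ over $\simp^{n}$: fixing once and for all a boundary collar with retraction $r$, an auxiliary smooth $\phi_{P}\colon\simp^{n}\to I$ that is $0$ near $P^{-1}(X\smallsetminus\Int A)$ and $1$ near $P^{-1}(X\smallsetminus\Int B)$, and a smooth cutoff $\beta\colon\simp^{n}\to I$ supported in the collar with $\beta\equiv 1$ on $\partial\simp^{n}$, I set $h_{\underline n}(P):=\phi_{P}$ off the collar and $(1-\beta)\,\phi_{P}+\beta\,(g{\circ}r)$ on the collar. Being a convex combination of $I$-valued functions it is $I$-valued; it restricts to $g$ on $\partial\simp^{n}$; it is smooth and agrees with $\phi_{P}$ near the inner edge of the collar; and, provided $\beta$ is chosen (using $\hat\lambda$) to vanish near those collar points that $r$ carries away from where $g$ is already locally constant at the target value $-$ a set whose closure misses $\partial\simp^{n}$, so this is compatible with $\beta|_{\partial\simp^{n}}=1$ $-$ a short check shows $h_{\underline n}(P)$ is still $0$ near $P^{-1}(X\smallsetminus\Int A)$ and $1$ near $P^{-1}(X\smallsetminus\Int B)$. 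By construction $h_{\underline n}(P)$ restricts on each facet to $h_{\underline{n-1}}(P{\circ}\partial^{\epsilon}_{i})$, so naturality under faces holds; naturality under degeneracies is obtained by taking the collar, $\phi_{P}$ and $\beta$ compatibly with products, so that for a plot of the form $P{\circ}\varepsilon_{i}$ the recipe returns $h_{\underline n}(P){\circ}\varepsilon_{i}$, which is consistent with the facet data by the relations of Definition~\ref{defn:cubic-category} expressing $\varepsilon_{j}{\circ}\partial^{\epsilon}_{i}$.

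For the second assertion, apply Urysohn's lemma to the disjoint closed sets $X\smallsetminus\Int A$ and $X\smallsetminus\Int B$ of the normal space $X$ to obtain a continuous $u\colon X\to I$ with $u\equiv 0$ on $X\smallsetminus\Int A$ and $u\equiv 1$ on $X\smallsetminus\Int B$, and set $G_{A}:=u^{-1}([\tfrac13,1])$ and $G_{B}:=u^{-1}([0,\tfrac23])$. These are closed, $X\smallsetminus\Int B\subset G_{A}\subset\Int A$, $X\smallsetminus\Int A\subset G_{B}\subset\Int B$ and $G_{A}\cup G_{B}=X$, and $-$ what the construction needs $-$ the open sets $X\smallsetminus G_{A}=u^{-1}([0,\tfrac13))$ and $X\smallsetminus G_{B}=u^{-1}((\tfrac23,1])$ have disjoint closures in $X$. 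Rerunning the inductive construction with the stronger demand that $h_{\underline n}(P)$ be $0$ on the open set $P^{-1}(X\smallsetminus G_{A})$ and $1$ on the open set $P^{-1}(X\smallsetminus G_{B})$ $-$ these are open subsets of $\simp^{n}$ with disjoint closures, so the same collar argument applies verbatim $-$ yields $\Supp\rho^{A}_{\underline n}(P)\subset\simp^{n}\smallsetminus P^{-1}(X\smallsetminus G_{A})=P^{-1}(G_{A})$ and likewise $\Supp\rho^{B}_{\underline n}(P)\subset P^{-1}(G_{B})$; the plain support conditions follow since $G_{A}\subset\Int A$ and $G_{B}\subset\Int B$.

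The step I expect to be the real obstacle is the naturality bookkeeping in this inductive extension: one must arrange the passage from $g$ on $\partial\simp^{n}$ to $h_{\underline n}(P)$ on $\simp^{n}$ so that it simultaneously (i) restricts to the prescribed facet data, (ii) is insensitive to adjoining a dummy coordinate, so that degenerate plots behave correctly, and (iii) never disturbs the exact values $0$ and $1$ near $P^{-1}(X\smallsetminus\Int A)$ and $P^{-1}(X\smallsetminus\Int B)$. It is precisely here that the stabilizer $\hat\lambda$ and the associated functions $\lambda_{a,b}$ are indispensable, since they allow one to glue and interpolate smooth functions across the faces of cubes while preserving prescribed locally constant behaviour; everything else reduces to the standard existence of smooth bump functions on $\real^{n}$ and to the cubical identities of Definition~\ref{defn:cubic-category}.
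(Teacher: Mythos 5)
Your proposal follows essentially the same route as the paper's proof: induction on $n$, a base case handled by a (Urysohn or crude) separating function on $X$, and an inductive step that extends the inherited facet data over $\simp^{n}$ by blending it, on a collar $U_{a}$ of $\partial{\simp^{n}}$, with an ordinary smooth partition of unity on the cube via a cutoff built from $\hat\lambda$ --- your formula $(1-\beta)\phi_{P}+\beta(g{\circ}r)$ is literally the paper's $\psi_{\circ}{\cdot}\varphi^{F}+\psi_{\partial}{\cdot}\hat\rho_{\underline{n}}^{F}(P)$, and your choice of $G_{A}$, $G_{B}$ in the normal case matches theirs. The one point you defer as ``the real obstacle'' --- making the glued boundary extension smooth across the corners of $\partial{\simp^{n}}$ while keeping the exact values $0$ and $1$ undisturbed --- is resolved in the paper exactly as you anticipate: they add to the induction hypothesis the condition (their condition (4)) that each $\rho^{F}_{\underline{n}}(P)$ is constant in the normal direction near every facet for some $a>0$, which makes the collar extension $\hat\rho^{F}_{\underline{n}}(P)$ well defined and smooth and keeps the support estimates intact for $a$ small enough.
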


The above theorem implies the exactness of Mayer-Vietoris exact sequence as follows.

\begin{cor}
Let $X$ be a differentiable space with an open covering \,$\mathcal{U}=\{\Int{A},\Int{B}\}$, $A, B \subset X$.
Then we have the following long exact sequence.
\par\vskip1.5ex\noindent\hfil$
\begin{array}{l}\displaystyle
\cdots \to H_{\Cubical}^{q}(X) \to H_{\Cubical}^{q}(A){\oplus}H_{\Cubical}^{q}(B) \to H_{\Cubical}^{q}(A \cap B) 
\\[1.5ex]\displaystyle\qquad\qquad
\to H_{\Cubical}^{q+1}(X) \to H_{\Cubical}^{q+1}(A){\oplus}H_{\Cubical}^{q+1}(B) \to H_{\Cubical}^{q+1}(A \cap B)  \cdots
\end{array}$\hfil%
\par\vskip1.5ex\noindent
\end{cor}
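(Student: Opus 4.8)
\emph{The plan.} Granting Theorem~\ref{thm:partition_of_unity}, the corollary is the Mayer--Vietoris argument of Theorem~\ref{thm:nice-open-covering-1} run for {\cubicaltxt} forms, so I would first isolate the homological part and treat the partition of unity as a black box. Put $U=\Int A$, $V=\Int B$, $W=U\cap V$; write $j_{1},j_{2}$ for the inclusions $U,V\hookrightarrow X$ and $i_{1},i_{2}$ for $W\hookrightarrow U,V$; and let $\{\rho^{A},\rho^{B}\}$ be a partition of unity belonging to $\{U,V\}$ furnished by Theorem~\ref{thm:partition_of_unity}. For each $p$ I would prove that
$$0\longrightarrow\Omega^{p}_{\Cubical}(X)\xrightarrow{\psi^{\natural}}\Omega^{p}_{\Cubical}(U)\oplus\Omega^{p}_{\Cubical}(V)\xrightarrow{\phi^{\natural}}\Omega^{p}_{\Cubical}(W)\longrightarrow0,\qquad\psi^{\natural}\omega=j_{1}^{\sharp}\omega\oplus j_{2}^{\sharp}\omega,\ \ \phi^{\natural}(\eta_{1}\oplus\eta_{2})=i_{1}^{\sharp}\eta_{1}-i_{2}^{\sharp}\eta_{2},$$
is short exact; since $\psi^{\natural},\phi^{\natural}$ are cochain maps, the induced long exact cohomology sequence is the asserted one (with $U,V,W$, which one identifies with $A,B,A\cap B$ through the inclusions $U\hookrightarrow A$ etc.).

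\emph{Injectivity and exactness in the middle.} As in Theorem~\ref{thm:nice-open-covering-1}: for an $n$-plot $P\colon\simp^{n}\to X$ the sets $P^{-1}(U),P^{-1}(V)$ cover $\simp^{n}$, so if $j_{1}^{\sharp}\omega=j_{2}^{\sharp}\omega=0$ then $\omega_{\underline{n}}(P)$ vanishes on each member of that cover, hence $\omega=0$; thus $\psi^{\natural}$ is monic. For $\Ker\phi^{\natural}\subseteq\Im\psi^{\natural}$, given $\eta_{1}\oplus\eta_{2}$ with $i_{1}^{\sharp}\eta_{1}=i_{2}^{\sharp}\eta_{2}$, I would set $\omega_{\underline{n}}(P)$ equal to the form on $\simp^{n}$ that restricts to $\eta_{1}$ evaluated along $P|_{P^{-1}(U)}$ on $P^{-1}(U)$ and to $\eta_{2}$ evaluated along $P|_{P^{-1}(V)}$ on $P^{-1}(V)$; these agree on $P^{-1}(W)$ by hypothesis, so $\omega_{\underline{n}}(P)$ is well defined (here the {\cubicaltxt} forms $\eta_{t}$ are evaluated on plots whose domain is an open subset of a cube, which is legitimate because $\wedge^{p}$ is a sheaf on cubes), and naturality of $\omega$ with respect to faces and degeneracies is inherited from that of $\eta_{1},\eta_{2}$. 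The reverse inclusion is clear.

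\emph{Surjectivity of $\phi^{\natural}$ --- where Theorem~\ref{thm:partition_of_unity} is used.} Given $\kappa\in\Omega^{p}_{\Cubical}(W)$, I would put, for an $n$-plot $P\colon\simp^{n}\to U$,
$$\kappa^{(1)}_{\underline{n}}(P)=\rho^{B}_{\underline{n}}(P)\cdot\bigl(\kappa\text{ evaluated along }P|_{P^{-1}(W)}\bigr)\ \text{ on }P^{-1}(W),\qquad\kappa^{(1)}_{\underline{n}}(P)=0\ \text{ on }\simp^{n}\smallsetminus\Supp\rho^{B}_{\underline{n}}(P),$$
and symmetrically $\kappa^{(2)}_{\underline{n}}(P)=-\rho^{A}_{\underline{n}}(P)\cdot(\kappa\text{ evaluated along }P|_{P^{-1}(W)})$ for $P\colon\simp^{n}\to V$. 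Since $P$ lands in $U$ one has $P^{-1}(W)=P^{-1}(V)$ and $\Supp\rho^{B}_{\underline{n}}(P)\subseteq P^{-1}(V)$, so $\{P^{-1}(V),\ \simp^{n}\smallsetminus\Supp\rho^{B}_{\underline{n}}(P)\}$ is an open cover of $\simp^{n}$ on which the two clauses agree; hence $\kappa^{(1)}$ is a well-defined smooth {\cubicaltxt} $p$-form on $U$ (naturality inherited from $\rho^{B}$ and $\kappa$), and likewise $\kappa^{(2)}\in\Omega^{p}_{\Cubical}(V)$. On $W$ one gets $i_{1}^{\sharp}\kappa^{(1)}-i_{2}^{\sharp}\kappa^{(2)}=(\rho^{B}+\rho^{A})\,\kappa=\kappa$, so $\phi^{\natural}(\kappa^{(1)}\oplus\kappa^{(2)})=\kappa$. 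This completes the short exactness and hence the corollary.

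\emph{Where the work really is.} Everything above is routine once Theorem~\ref{thm:partition_of_unity} is available; that theorem is the genuine obstacle, and its proof is the delicate part. I would build $\rho^{A}$ (and set $\rho^{B}=1-\rho^{A}$) by induction on the dimension $n$ of plots: on a degenerate plot the value is forced by naturality, with well-definedness checked via the cubical identities (a cubical Eilenberg--Zilber decomposition of a plot into its non-degenerate core and degeneracies); on a non-degenerate plot $P$, the faces $P\circ\partial^{\epsilon}_{i}$ already carry values that agree on the lower faces and so assemble to smooth boundary data $g$ on $\partial\simp^{n}$, vanishing near $P^{-1}(X\smallsetminus\Int A)$ and equal to $1$ near $P^{-1}(X\smallsetminus\Int B)$, which one extends into $\simp^{n}$ using smooth normality of the cube together with the fixed stabilizer $\hat\lambda$ (interpolating across a collar of $\partial\simp^{n}$) so as to keep $\Supp\rho^{A}_{\underline{n}}(P)\subseteq P^{-1}(\Int A)$. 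For the normal refinement one first takes a Urysohn function $u\colon X\to I$ separating $X\smallsetminus\Int B$ from $X\smallsetminus\Int A$ and sets $G_{A}=u^{-1}([\tfrac13,1])$, $G_{B}=u^{-1}([0,\tfrac23])$ --- chosen so that $\Int G_{A}\cup\Int G_{B}=X$, which is exactly what forces the two prescribed locally constant regions on each cube to have disjoint closures --- and runs the same construction with $\Int A,\Int B$ replaced by $G_{A},G_{B}$ in the support conditions. The hard point is the simultaneous bookkeeping: that the inductive data is consistent across all factorizations of a plot through faces and degeneracies, and that the cube-level extension can always be carried out while respecting the support constraints inherited from the boundary; granting it, the only place the present argument needs care is the support control that makes the extensions of $\rho^{B}\kappa$ and $\rho^{A}\kappa$ by zero into honest {\cubicaltxt} forms.
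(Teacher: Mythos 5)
Your reduction to a short exact sequence
$0\to\Omega^{p}_{\Cubical}(X)\to\Omega^{p}_{\Cubical}(U)\oplus\Omega^{p}_{\Cubical}(V)\to\Omega^{p}_{\Cubical}(W)\to0$
does not go through, and the failure is precisely the point of this section of the paper. A {\cubicaltxt} $p$-form is a natural transformation of functors on the cube category $\Cubical$, whose morphisms are generated only by the faces $\partial^{\epsilon}_{i}$ and degeneracies $\varepsilon_{i}$; it assigns a value only to plots whose domain is a whole cube $\simp^{n}$. Consequently the expressions you use at every step --- ``$\eta_{1}$ evaluated along $P\vert_{P^{-1}(U)}$'', ``$\omega_{\underline{n}}(P)$ vanishes on each member of the cover $\{P^{-1}(U),P^{-1}(V)\}$'' --- are not defined: $P\vert_{P^{-1}(U)}$ is not an $n$-plot, and a {\cubicaltxt} form carries no compatibility data with respect to inclusions of open subsets (or even of affine subcubes) of $\simp^{n}$. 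The parenthetical appeal to ``$\wedge^{p}$ is a sheaf on cubes'' does not repair this, because the object that would need to be a sheaf is $\eta_{1}$ itself, and it is not one. So your argument for injectivity of $\psi^{\natural}$, for $\Ker\phi^{\natural}\subseteq\Im\psi^{\natural}$, and for the well-definedness of $\kappa^{(1)},\kappa^{(2)}$ all break down. (These steps are exactly the ones that do work for the ordinary forms $\Omega^{p}_{\mathcal{E}}$ in Theorem \ref{thm:nice-open-covering-1}, where naturality over all of $\domain$ supplies the needed restrictions; that is the disanalogy you have missed.)

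The paper's route is different in exactly this respect. One replaces $\Omega^{p}_{\Cubical}(X)$ by $\Omega^{p}_{\Cubical}(\mathcal{U})$, the forms defined only on plots whose image lies in a single member of $\mathcal{U}$; for these the short exact sequence of Theorem \ref{thm:cubical-open-covering} is genuinely easy, since every relevant plot factors through $U$ or $V$ outright. The substantive ingredient is then the excision theorem (Theorem \ref{thm:excision}), $H^{\ast}_{\Cubical}(X)\cong H^{\ast}_{\Cubical}(\mathcal{U})$, proved by iterating the cubical subdivision $\subdiv_{\mathcal{U}}$ until every cell maps into some member of $\mathcal{U}$, reparametrizing by the fixed diffeomorphisms $\phi_{\sigma}$ and the smoothing $\lambda^{n}$, and exhibiting the chain homotopies $D_{\mathcal{U}}$ and $D^{(r)}_{\mathcal{U}}$ of Lemmas \ref{lem:de-rham-stability} and \ref{lem:de-rham-injectivity}. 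Theorem \ref{thm:partition_of_unity} is used only for the surjectivity of $\phi^{\natural}$. So you have inverted where the work lies: the partition of unity is not ``the genuine obstacle'' for this corollary, and the ``routine'' homological part is the part that needs the new machinery.
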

\begin{proof}[Theorem \ref{thm:partition_of_unity}]
If $X$ is normal, there is a continuous function $\rho : X \to I$ with $X \!\smallsetminus\! \Int{B} \subset \rho^{-1}(0)$ and $X \!\smallsetminus\! \Int{A} \subset \rho^{-1}(1)$.
Otherwise, we define a function $\rho : X \to I$ by
\par\vskip1ex\noindent\hfil
$\rho(x)=\begin{cases}
1, & x \in \Int{A} \smallsetminus \Int{B}, 
\\[.2ex] 1/2, & x \in \Int{A} \cap \Int{B}, 
\\[.2ex] 0 & x \in \Int{B} \smallsetminus \Int{A}.
\end{cases}$
\hfil\par\vskip1ex\noindent

Let $G_{A}=\rho^{-1}([0,\frac{2}{3}]) \subset X\!\smallsetminus\!\rho^{-1}(1) \subset \Int{A}$ and $G_{B}=\rho^{-1}([\frac{1}{3},1]) \subset X\!\smallsetminus\!\rho^{-1}(0) \subset \Int{B}$.
Then $\Int{G_{A}} \cup \Int{G_{B}} \supset \rho^{-1}([0,\frac{2}{3})) \cup \rho^{-1}((\frac{1}{3},1]) = \rho^{-1}([0,\frac{2}{3}) \cup (\frac{1}{3},1]) = X$.
Thus it is sufficient to construct a partition of unity $\{\rho^{A},\rho^{B}\}$ belonging to $\mathcal{U}=\{\Int{G_{A}},\Int{G_{B}}\}$:
by induction on $n$, we construct functions $\rho_{\underline{n}}^{A}(P),\, \rho_{\underline{n}}^{B}(P) : \simp^{n} \to I$ for any $n$-plot $P : \simp^{n} \to X$, with conditions (1) through (4) below for $F=A, B$ and $\epsilon=0, 1$.
\begin{enumerate}
\vskip1ex
\item\label{cold:partition-of-unity0}
\begin{enumerate}
\item
$\rho_{\underline{n}}^{F}(P{\circ}\varepsilon_{i})=\rho_{\underline{n-1}}^{F}(P){\circ}\varepsilon_{i}$, \ $1 \!\leq\! i \!\leq\! n{+}1$,
\hiitem
$\rho_{\underline{n-1}}^{F}(P{\circ}\partial^{\epsilon}_{i})=\rho_{\underline{n}}^{F}(P){\circ}\partial^{\epsilon}_{i}$, \ $1 \!\leq\! i \!\leq\! n$, 
\end{enumerate}
\vskip1ex
\item\label{cold:partition-of-unity1}
$\rho_{\underline{n}}^{A}(P)+\rho_{\underline{n}}^{B}(P)=1 : \simp^{n} \to \real$,
\hitem\label{cold:partition-of-unity2}
$\Supp\rho_{\underline{n}}^{F}(P) \subset {P}^{-1}(\Int{G_{F}}) \subset \simp^{n}$,
\vskip1ex
\item\label{cold:partition-of-unity4}
$\rho_{F}(P){\circ}\partial^{1-t}_{i} = \rho_{F}(P){\circ}\partial^{1}_{i}$ and 
$\rho_{F}(P){\circ}\partial^{t}_{i} = \rho_{F}(P){\circ}\partial^{0}_{i}$\vspace{.5ex} for all $0 \!\leq\! t \!\leq\! a$ for sufficiently small $a\!>\!0$, where $\partial^{t}_{i}$ is defined by $\partial^{t}_{i}(t_{1},\ldots,t_{n-1}) = (t_{1},\ldots,t_{i-1},t,t_{i+1},\ldots,t_{n-1})$.
\end{enumerate}
\par\vskip-.5ex\noindent
\par($n=0$) \ 
For any plot $P : \simp^{0} = \{\ast\} \to X$, we define $\rho_{\underline{n}}^{A}(P) = \rho(P(*))$ and $\rho_{\underline{n}}^{B}(P)=1-\rho_{\underline{n}}^{A}(P)$, which satisfy (2) and (3), though (1) and (4) are empty conditions in this case.
\par\vskip1ex\noindent
\par($n>0$) \ 
We may assume a plot $P : \simp^{n} \to X$ is non-degenerate by (1) a).
\par\vskip1ex\noindent

Firstly, $P^{-1}\mathcal{U}=\{P^{-1}(\Int{A}), P^{-1}(\Int{B})\}$ is an open covering of $\simp^{n} \subset \real^{n}$, and hence we have a partition of unity $\{\varphi^{A},\varphi^{B}\}$ belonging to $P^{-1}\mathcal{U}$ on $\simp^{n}$.

Secondly, by the induction hypothesis, there is a small $a\!>\!0$ for the condition (4).
Let $U_{a}$ be the $a$-neighbourhood of $\partial{\simp^{n}}$. 
For $F=A, B$, we define $\hat\rho_{\underline{n}}^{F}(P) : U_{a} \to \real$ by
\par\vskip1ex\noindent
\hfil$
\hat\rho_{\underline{n}}^{F}(P){\circ}\partial^{\,\epsilon{\pm}t}_{i} = 
\rho_{\underline{n-1}}^{F}(P{\circ}\partial^{\epsilon}_{i}), \ 0 \!\leq\! t \!<\! a, \ 1 \!\leq\! i \!\leq\! n, \ \epsilon\!=\!0, 1,
$\hfil
\par\vskip1ex\noindent
where we denote $\epsilon{\pm}t = \epsilon\!+\!(-1)^{\epsilon}t$, and then we obtain $\Supp\hat\rho_{\underline{n}}^{F}(P) \subset P^{-1}(\Int{G_{F}}) \cap U_{a}$, if we choose $a\!>\!0$ small enough.
\par\vskip1.5ex\noindent
\setlength\unitlength{.1truemm}
\begin{center}
\begin{picture}(400,200)(-200,-100)
\multiput(-90,-90)(5,0){36}{\line(1,0){2}}
\multiput(-90,90)(5,0){36}{\line(1,0){2}}
\multiput(-90,-87)(0,5){36}{\line(0,1){2}}
\multiput(90,-87)(0,5){36}{\line(0,1){2}}
\multiput(-100,-90)(0,10){19}{\line(1,0){10}}
\multiput(90,-90)(0,10){19}{\line(1,0){10}}
\multiput(-90,-100)(10,0){19}{\line(0,1){10}}
\multiput(-90,90)(10,0){19}{\line(0,1){10}}
\put(120,-100){$U_{a}$}
\thicklines
\multiput(-100,-100)(0,200){2}{\line(1,0){200}}
\multiput(-100,-100)(200,0){2}{\line(0,1){200}}
\end{picture}
\begin{picture}(400,200)(-200,-100)
\multiput(-100,-100)(5,0){40}{\line(1,0){2}}
\multiput(-100,100)(5,0){40}{\line(1,0){2}}
\multiput(-100,-97)(0,5){40}{\line(0,1){2}}
\multiput(100,-97)(0,5){40}{\line(0,1){2}}
\multiput(-80,-90)(-10,10){2}{\line(1,1){170}}
\multiput(-60,-90)(-30,30){2}{\line(1,1){150}}
\multiput(-40,-90)(-50,50){2}{\line(1,1){130}}
\multiput(-20,-90)(-70,70){2}{\line(1,1){110}}
\multiput(0,-90)(-90,90){2}{\line(1,1){90}}
\multiput(20,-90)(-110,110){2}{\line(1,1){70}}
\multiput(40,-90)(-130,130){2}{\line(1,1){50}}
\multiput(60,-90)(-150,150){2}{\curve(0,0,30,30)}
\multiput(80,-90)(-170,170){2}{\curve(0,0,10,10)}
\put(120,-100){$\Int{\simp^{n}}$}
\end{picture}
\end{center}
\par\vskip1.5ex\noindent

Thirdly, since two open sets $U_{a}$ and $\Int{\simp^{n}}$ form an open covering of $\simp^{n}$, we also have a partition of unity $(\psi_{\partial},\psi_{\circ})$ belonging to $\{U_{a},\Int{\simp^{n}}\}$ given by $\psi_{\partial} = (\lambda_{1-a,1})^{n}
$ and $\psi_{\circ} = 1\!-\!\psi_{\partial}$ so that we have $\Supp\psi_{\partial} \subset U_{a}$ and $\Supp\psi_{\circ} \subset \Int{\simp^{n}}$.
Then, for $F=A, B$, $\psi_{\partial}\vert_{U_{a}}{\cdot}\hat\rho_{\underline{n}}^{F}(P)$ is defined on $U_{a}$ with value $0$ on $U_{a} \!\smallsetminus\! \Supp\psi_{\partial}$.
Hence by filling $0$ outside $\Supp\psi_{\partial}$, we obtain a smooth map $\widehat{\psi_{\partial}\rho_{\underline{n}}^{F}} : \simp^{n} \to \real$ on entire $\simp^{n}$, as the $0$-extension of $\psi_{\partial}\vert_{U_{a}}{\cdot}\hat\rho_{\underline{n}}^{F}(P) : U_{a} \to \real$.

Finally, let $\rho_{\underline{n}}^{F}(P)=\widehat{\psi_{\partial}\rho_{\underline{n}}^{F}}+\psi_{\circ}{\cdot}\varphi^{F}$ for $F=A, B$.
Then $\Supp\rho_{\underline{n}}^{F}(P) \subset \Supp\widehat{\psi_{\partial}\rho_{\underline{n}}^{F}} \cup \Supp(\psi_{\circ}{\cdot}\varphi^{F}) \subset (\Supp\psi_{\partial} \,\cap\, \Supp\hat\rho_{\underline{n}}^{F}) \,\cup\, (\Supp\psi_{\circ} \,\cap\, \Supp\varphi^{F}) \subset (U_{a} \cap P^{-1}(\Int{G_{F}})) \,\cup\, (\Int{\simp^{n}} \,\cap\, P^{-1}(\Int{G_{F}})) = P^{-1}(\Int{G_{F}})$.
By definition, we also have 
\par\vskip1ex\noindent\hfil
$\rho_{\underline{n}}^{A}(P)+\rho_{\underline{n}}^{B}(P)=\widehat{\psi_{\partial}\rho_{\underline{n}}^{A}}+\widehat{\psi_{\partial}\rho_{\underline{n}}^{B}}+\psi_{\circ}{\cdot}\varphi^{A}+\psi_{\circ}{\cdot}\varphi^{B} = \psi_{\partial} + \psi_{\circ}=1$ \ on \ $\simp^{n}$, 
\hfil\par\vskip1ex\noindent
which implies that $(\rho_{\underline{n}}^{A}(P),\rho_{\underline{n}}^{B}(P))$ gives a partition of unity belonging to the open covering $\{P^{-1}(\Int{A}),P^{-1}(\Int{B})\}$ of $\simp^{n}$.
By definition, $(\rho_{\underline{n}}^{A}(P),\rho_{\underline{n}}^{B}(P))$ satisfies the conditions (1) through (4), and it completes the induction step.
The latter part is clear.
\end{proof}

\section{Excision theorem}

Let $X=(X,\mathcal{E}^{X})$ be a differentiable space and \,$\mathcal{U}$ an open covering of $X$.
We denote $\mathcal{E}^{\,\mathcal{U}} = \{P \!\in\! \mathcal{E}^{X}\,;\,\Im{P} \!\subset\! U \ \text{for some} \ U \!\in\! \mathcal{U}\}$.
Then we regard $\mathcal{E}^{\,\mathcal{U}}$ as a functor $\mathcal{E}^{\,\mathcal{U}} : \convex \to \sets$ which is given by $\mathcal{E}^{\,\mathcal{U}}(C)=\{P \in \mathcal{E}^{\,\mathcal{U}}, \ \Dom{P}=C\}$ for $C \in \Object{\convex}$ and $\mathcal{E}^{\,\mathcal{U}}(f) = \mathcal{E}^{X}(f)\vert_{\mathcal{E}^{\,\mathcal{U}}(C)} : \mathcal{E}^{\,\mathcal{U}}(C) \to \mathcal{E}^{\,\mathcal{U}}(C')$ for a smooth map $f : C' \to C$ in $\convex$.
When \,$\mathcal{U}=\{\!X\!\}$, we have $\mathcal{E}^{\{\!X\!\}} = \mathcal{E}^{X}$.
We also denote $\mathcal{E}_{\Cubical}^{\,\mathcal{U}} = \mathcal{E}^{\,\mathcal{U}}{\circ}{\Cubical} : \Cubical \to \sets$.

\begin{defn}
A natural transformation $\omega : \mathcal{E}_{\Cubical}^{\,\mathcal{U}}\to \wedge^{p}_{\Cubical}$ is called a {\cubicaltxt} differencial $p$-form w.r.t. an open covering \,$\mathcal{U}$ of $X$.
$\Omega_{\Cubical}^{p}(\mathcal{U})$ denotes the set of all {\cubicaltxt} differential $p$-form w.r.t. an open covering \,$\mathcal{U}$ of $X$.
For example, $\Omega_{\Cubical}^{p}(\{\!X\!\})=\Omega_{\Cubical}^{p}(X)$.
\end{defn}

We introduce a notion of a $q$-{\cubictxt} in $\real^{n}$ using induction on $q \geq -1$ up to $n$.
\begin{description}
\item[~~($q\!=\!-1$)]
The empty set $\emptyset$ is a $-1$-cubic set in $\real^{n}$.
\vspace{.5ex}
\item[~~($n \!\geq\! q\!\geq\!0$)]
\begin{enumerate}
\vspace{.5ex}
\item
if $\sigma \subset L$ is a $(q{-}1)$-{\cubictxt} in $\real^{n}$ and $\mathbold{b} \!\not\in\! L$, where $L$ is a hyperplane of dimension $q{-}1$ in $\real^{n}$, then $\sigma{\ast}\mathbold{b} = \{t\mathbold{x}\!+\!(1{-}t)\mathbold{b} \,;\, \mathbold{x} \!\in\! \sigma, \ t \!\in\! I\}$ is a $q$-{\cubictxt} in $\real^{n}$ with faces $\tau$ and $\tau{\ast}\mathbold{b}$, where $\tau$ is a face of $\sigma$, including $\emptyset$ and $\emptyset{\ast}\mathbold{b}=\mathbold{b}$.
\vspace{.5ex}
\item
if $\sigma \subset \real^{i-1}{\times}\{0\}{\times}\real^{n-i}$ is a $(q{-}1)$-{\cubictxt} in $\real^{n}$ with $q \!\geq\! 1$, then the product set $\sigma{\times}I = \{(\mathbold{x}_{i-1},t,\mathbold{x}'_{n-i}) \,;\, (\mathbold{x}_{i-1},0,\mathbold{x}'_{n-i}) \!\in\! \sigma, \ t \!\in\! I\}$ is a $q$-{\cubictxt} in $\real^{n}$ with faces $\tau{\times}\{0\}$, $\tau{\times}\{1\}$ and $\tau{\times}I$, where $\tau$ is a face of $\sigma$, including $\emptyset$.
\vspace{1ex}
\end{enumerate}
\end{description}
We denote by $\Cubic(n)^{q}$ the set of $q$-{\cubicstxt} in $\real^{n}$ and $\Cubic(n) = \{\emptyset\}\cup\underset{q \geq 0}{\cup}\Cubic(n)^{q}$, $n \geq 0$.
We denote $\tau < \sigma$ if $\tau \in \Cubic(n)$ is a face of $\sigma \in \Cubic(n)$ and denote $\partial{\sigma} = \underset{\tau<\sigma}{\cup}\sigma$.
We fix a relative diffeomorphism $\phi_{\sigma} : ({\Cubical}^{q},\partial{{\Cubical}^{q}}) \twoheadrightarrow (\sigma,\partial{\sigma})$ for each $q$-{\cubictxt} $\sigma$ in $\real^{n}$, $q \!\geq\! 0$.

A subset $K \subset \Cubic(n)$ is called a {\cubicaltxt} complex if it satisfies the following conditions.
\begin{enumerate}
\vspace{.0ex}
\item
$\emptyset \in K$,
\hitem
$\tau<\sigma \ \& \ \sigma \in K \implies \tau \in K$,
\vspace{.5ex}
\item
$\tau, \sigma \in K \implies \tau\cap\sigma \in K \ \ \& \ \ \tau\cap\sigma < \tau \ \ \& \ \ \tau\cap\sigma < \sigma$.
\vspace{.0ex}
\end{enumerate}

For any {\cubicaltxt} complex $K \!\subset\! \Cubic(n)$, we denote $K^{q}=\{\sigma \!\in\! K \,;\, \text{$\sigma$ is a $q$-{\cubictxt}} \}$, $n \!\geq\! 0$ and $\vert{K}\vert = \underset{\sigma \in K}{\cup}\sigma$.
For any {\cubicaltxt} complexes $K$ and $L$, a map $f : \vert{L}\vert \to \vert{K}\vert$ in $\convex$ is called `polyhedral' w.r.t. $L$ and $K$, if $f(\sigma) \in K$ for any $\sigma \in L$.
If a {\cubicaltxt} complex $K \!\subset\! \Cubic(n)$ satisfies $\vert{K}\vert=\simp^{n}$, we call $K$ a `{\cubicaltxt} subdivision' of an $n$-cube $\simp^{n}$.

\begin{defn}
We define a category $\subdivision_{\mathcal{U}}$ as follows:
\begin{description}
\item[Object]
$\Object{\subdivision_{\mathcal{U}}} = \vspace{.5ex}\{(K,P) \in \Cubic(n){\times}\mathcal{E}^{X}(\simp^{n}) \,;\, \vert{K}\vert\!=\!\simp^{n}, \ \forall_{\sigma \in K}\,P\vert_{\sigma} \!\in\! \mathcal{E}^{\,\mathcal{U}}, \ n \!\geq\!0\}$,
\item[Morphism]
$\subdivision_{\mathcal{U}}((L,Q),(K,P)) = \{f : \vert{L}\vert \subset \vert{K}\vert \ \text{polyhedral} \,;\, Q\!=\!P\vert_{\vert{L}\vert}\}$.
\end{description}
\end{defn}

Let $\subdivision_{X} = \subdivision_{\{\!X\!\}}$.
Then there is an embedding $\iota^{\,\mathcal{U}}_{SD} : \subdivision_{\mathcal{U}} \hookrightarrow \subdivision_{X}$.

\begin{thm}\label{thm:subdivision}
There is a functor $\subdiv^{*}_{\mathcal{U}} : \subdivision_{X} \twoheadrightarrow \subdivision_{\mathcal{U}}$ such that $\subdiv^{*}_{\mathcal{U}}{\circ}\iota^{\,\mathcal{U}}_{SD} = \id$.\vspace{-.5ex}
\end{thm}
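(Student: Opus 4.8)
The plan is to refine the given subdivision \emph{inductively over skeleta}, cell by cell, so that every cell of the refinement is carried by $P$ into a single member of $\mathcal{U}$, while any cell already enjoying this property is left untouched; the first demand produces a functor into $\subdivision_{\mathcal{U}}$, and the second makes it restrict to the identity on $\subdivision_{\mathcal{U}}$. Fix $(K,P)\in\Object{\subdivision_{X}}$ with $\vert{K}\vert=\simp^{n}$. Since $\simp^{n}$ is compact and $\{P^{-1}(U)\,;\,U\in\mathcal{U}\}$ is an open covering, Lebesgue's covering lemma furnishes $\delta>0$ such that $P$ carries every subset of $\simp^{n}$ of diameter $<\delta$ into some member of $\mathcal{U}$. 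Call a cubic set $\sigma\subset\simp^{n}$ \emph{good} when $P(\sigma)$ lies in some member of $\mathcal{U}$; note that every face of a good cubic set is good.

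I would build the refinement $K'$ of $K$ by induction on the skeleta of $K$. The $0$-skeleton is left as is, every vertex being good. Assume the $(q{-}1)$-skeleton has been refined so that all of its cells are good. For a $q$-cell $\tau\in K$: if $\tau$ is good, leave it unsubdivided — this is consistent, since then $\partial\tau$ consists of good cells, which were therefore also left unsubdivided at an earlier stage. If $\tau$ is not good, its boundary $\partial\tau$ already carries the all-good refined subdivision, and $\tau$ is a convex polytope; extend the subdivision of $\partial\tau$ across $\tau$ by coning from an interior point (the tent construction $\sigma\mapsto\sigma{\ast}\mathbold{b}$ of the definition of cubic sets), and then refine the interior cells, fixing their boundaries, by iterated barycentric subdivision of $\tau$ relative to $\partial\tau$ (the resulting simplicial cells are themselves cubic sets in the present sense), stopping once every cell of the result is good. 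This terminates: under iterated relative subdivision the mesh in the interior tends to $0$, so interior cells eventually have diameter $<\delta$, while the cells abutting $\partial\tau$ are swept into ever thinner collars of the good, compact cells of $\partial\tau$ and hence eventually have $P$-image inside the corresponding open set. Since $\partial\tau$ is never disturbed, the refinements of $q$-cells sharing a face agree, and all $q$-cells of $K'$ are good. Carrying the induction up to $n$ yields $(K',P)\in\Object{\subdivision_{\mathcal{U}}}$.

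Put $\subdiv^{*}_{\mathcal{U}}(K,P)=(K',P)$. For a morphism $f:(L,Q)\to(K,P)$ of $\subdivision_{X}$, $f$ is polyhedral and $Q(\sigma)=P(f(\sigma))$ for every cell $\sigma$ of $L$; hence $\sigma$ is good for $Q$ precisely when $f(\sigma)$ is good for $P$, and the relative refinement recipe applied to $\sigma$ is transported by $f$ to the one applied to $f(\sigma)$, so $f$ carries $L'$ polyhedrally into $K'$. This defines $\subdiv^{*}_{\mathcal{U}}(f)$, and functoriality follows at once from uniqueness. Finally, if $(K,P)\in\Object{\subdivision_{\mathcal{U}}}$, then every cell of $K$ is good, so at no stage is any cell subdivided and $K'=K$; thus $\subdiv^{*}_{\mathcal{U}}{\circ}\iota^{\,\mathcal{U}}_{SD}=\id$, and in particular $\subdiv^{*}_{\mathcal{U}}$ is surjective, as required.

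The main obstacle is the coherence bookkeeping hidden in the second paragraph: one must check that coning the fixed boundary subdivision of a bad top cell $\tau$ from an interior point and then refining its interior relative to $\partial\tau$ really does produce a legitimate cubical subdivision of $\tau$, and — crucially — that after \emph{finitely many} such relative refinements \emph{every} cell of the global complex, including the thin prisms hugging each $\partial\tau$, has become good. This is exactly where it matters that the boundary cells were made good at the previous stage and are then held fixed, so that only the single Lebesgue number $\delta$ of $P$ on $\simp^{n}$, and not any cell-dependent quantity, controls the construction. A secondary, more bureaucratic point is to verify that this cell-local recipe is natural with respect to every morphism of $\subdivision_{X}$ — face inclusions and, more generally, the polyhedral maps occurring as morphisms — using that such a map sends each cell onto a cell with unchanged image in $X$ and so can neither spoil good cells nor alter the data governing the relative refinement.
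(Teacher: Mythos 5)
Your proposal is essentially the paper's own argument: both keep the cells already carried by $P$ into a member of $\mathcal{U}$ fixed, cone each remaining cell over its recursively subdivided boundary from an interior point, and iterate until every cell is good, with termination guaranteed by compactness — your Lebesgue number $\delta$ and ``thin collar'' remark play exactly the role of the paper's quantities $\varepsilon^{\mathcal{U}}_{P}(K)$ and $\diff^{\mathcal{U}}_{P}(K)$ and its mesh estimate $\diff^{\mathcal{U}}_{P}(\subdiv^{\mathcal{U}}_{P}(K))\leq\tfrac{n}{n+1}\diff^{\mathcal{U}}_{P}(K)$, which make that step quantitative. Functoriality and the retraction identity $\subdiv^{*}_{\mathcal{U}}{\circ}\iota^{\,\mathcal{U}}_{SD}=\id$ are obtained just as you describe, from the locality of the construction under the polyhedral inclusions that constitute the morphisms and from the fact that a complex all of whose cells are good is left untouched.
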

\begin{proof}
We construct a functor $\subdiv_{\mathcal{U}} : \subdivision_{X} \to \subdivision_{X}$ satisfying $\subdiv_{\mathcal{U}}{\circ}\iota^{\,\mathcal{U}}_{SD}$ $=$ $\iota^{\,\mathcal{U}}_{SD}$.

Firstly, for $(K,P) \in \mathcal{O}(\subdivision_{X})$, $K \subset C(n)$ is a {\cubicaltxt} subdivision of $\vert{K}\vert=
\simp^{n} = \Dom{P}$.
Let $K_{P}(\mathcal{U})=\{\sigma \!\in\! K \,;\, \exists_{U \in \mathcal{U}}\,P(\sigma) \!\subset\! U\} < K$.
We define $\subdiv_{\mathcal{U}}(K,P)=(\subdiv^{\mathcal{U}}_{P}(K),P)$ by induction on dimension of a {\cubictxt} in $K$.
\par\vskip1.2ex\noindent
\hfil$\begin{array}{rl}
\subdiv^{\mathcal{U}}_{P}(K)^{0} \!\!&= K^{0} \cup \{\mathbold{b}_{\sigma} \,;\,\sigma \in K \!\smallsetminus\! K_{P}(\mathcal{U})\},
\\[1.2ex]
\subdiv^{\mathcal{U}}_{P}(K)^{q} \!\!&=
K_{P}(\mathcal{U})^{q} \ \cup \ \{\rho\ast{\mathbold{b}_{\sigma} \,;\, \rho \in \subdiv^{\mathcal{U}}_{P}(\partial{\sigma})^{q-1}}, \ \sigma \in K \!\smallsetminus\! K_{P}(\mathcal{U})\},
\end{array}$\hfil
\par\vskip1.2ex\noindent
where $\partial{\sigma}$ denotes the subcomplex $\{\tau\in K \,;\, \tau\!<\!\sigma\}$ of $K$.

Secondly, for any map $f : (L,Q) \to (K,P)$, we have $L\!\subset\!K$ and $Q\!=\!P\vert_{\vert{L}\vert}$.
Then by definition, we have $\subdiv_{\mathcal{U}}(L) \!\subset\!\subdiv_{\mathcal{U}}(K)$, and hence the inclusion $f : \vert{\subdiv_{\mathcal{U}}(L)}\vert = \vert{L}\vert \subset \vert{K}\vert = \vert{\subdiv_{\mathcal{U}}(K)}\vert$ is again polyhedral.
Thus we obtain $\subdiv_{\mathcal{U}}(f) = f : \subdiv_{\mathcal{U}}(L,Q) \to \subdiv_{\mathcal{U}}(K,P)$.

Thirdly, we give a distance of subcomplexes $K$ and $K_{P}(\mathcal{U})$ defined as follows:
\par\vskip1.5ex\noindent
\hfil$\begin{array}{l}
\varepsilon^{\mathcal{U}}_{P}(K) = \Min\left\{\diff{(\tau,\mathbold{x})} \,\midvert\, \tau \subset P^{-1}(U) \not\ni \mathbold{x}, \ U \in \mathcal{U} \ \ \& \ \text{$\tau$ is maximal in $K_{P}(\mathcal{U})$}\right\},
\\[1.5ex]
\diff^{\mathcal{U}}_{P}(K) = \Max\left\{\diff(\tau,\mathbold{x})\,\midvert\,\tau \cap \sigma \not= \emptyset, \ \mathbold{x} \in \sigma \in K\ \& \ \text{$\tau$ is maximal in $K_{P}(\mathcal{U})$}\right\},
\end{array}$\hfil
\par\vskip1.5ex\noindent
where $\diff{(\tau,\mathbold{x})}$ denotes the distance in $\simp^{n}$ of $\tau$ and $\mathbold{x}$, and hence $\varepsilon^{\mathcal{U}}_{P}(K) > 0$.
We can easily see that $\diff_{\mathcal{U}}(\subdiv^{\mathcal{U}}_{P}(K)) \leq \frac{n}{n{+}1}\diff_{\mathcal{U}}(K)$ and hence that, for sufficiently large $r \!>\! 0$, the $r$-times iteration of $\subdiv^{\mathcal{U}}_{P}$ satisfies $\diff^{\mathcal{U}}_{P}((\subdiv^{\mathcal{U}}_{P})^{r}(K)) < \varepsilon^{\mathcal{U}}_{P}(K)$.
Thus $\subdiv^{r}_{\mathcal{U}}(K,P) \in \subdivision_{\mathcal{U}}$.

Finally, when $(K,P) \in \subdivision_{\mathcal{U}}$, we have $\subdiv^{\mathcal{U}}_{P}(K,P) = (K,P)$ by definition, and hence $\subdiv^{*}_{\mathcal{U}}$ the sufficiently many times iteration of $\subdiv_{\mathcal{U}}$ on each $(K,P)$ is a desired functor.\vspace{-1ex}
\end{proof}

\begin{defn}
A functor $\tentdiv_{\mathcal{U}} : \subdivision_{X} \rightarrow \subdivision_{X}$ given by $\tentdiv_{\mathcal{U}}(K,P) = (\tentdiv^{\mathcal{U}}_{P}(K),\hat{P})$ for $(K,P) \in \Object{\subdivision_{X}}$ is defined as follows: we denote $\hat{P} = P{\circ}\proj_{1} : \simp^{n}{\times}I \to X$ which is a plot in $\mathcal{E}^{X}(\simp^{n+1})$. 
Then a {\cubicaltxt} subdivision $\tentdiv^{\mathcal{U}}_{P}(K)$ of \,${\simp}^{n+1}$ is defined as follows:
\par\vskip1.5ex\noindent
\hfil$\begin{array}{rl}
\tentdiv^{\mathcal{U}}_{P}(K)^{0} \!\!&= K^{0}{\times}\{0\} \cup \subdiv^{\mathcal{U}}_{P}(K)^{0}{\times}\{1\}, 
\\[1.2ex]
\tentdiv^{\mathcal{U}}_{P}(K)^{q} \!\!&=
K^{q}{\times}\{0\} \cup \subdiv^{\mathcal{U}}_{P}(K)^{q}{\times}\{1\} \cup K_{P}(\mathcal{U})^{q-1}{\times}I
\\[1ex]&\quad
\ \cup \ 
\{\rho\ast{(\mathbold{b}_{\sigma},1)} \,;\, \rho \in \tentdiv^{\mathcal{U}}_{P}(\partial{\sigma})^{q-1}, \ \sigma \in K \!\smallsetminus\! K_{P}(\mathcal{U})\}. 
\end{array}$\hfil
\par\vskip1.5ex\noindent
Also for a map $f : (L,Q) \to (K,P)$, we have $L\!\subset\!K$ and $Q\!=\!P\vert_{\vert{L}\vert}$.
Then by definition, we have $\tentdiv_{\mathcal{U}}(L) \!\subset\!\tentdiv_{\mathcal{U}}(K)$, and hence the inclusion $f{\times}\id : \vert{\tentdiv_{\mathcal{U}}(L)}\vert = \vert{L}\vert{\times}I \subset \vert{K}\vert{\times}I = \vert{\tentdiv_{\mathcal{U}}(K)}\vert$ is again polyhedral.
Thus we obtain $\tentdiv_{\mathcal{U}}(f) = f : \tentdiv_{\mathcal{U}}(L,Q) \to \tentdiv_{\mathcal{U}}(K,P)$.\vspace{-1ex}
\end{defn}

\begin{defn}
For any {\cubicaltxt} differential $p$-form $\omega \in \Omega_{\Cubical}^{p}(\mathcal{U})$, we have a {\cubicaltxt} differential $p$-form $\widetilde\omega \in \Omega_{\Cubical}^{p}(\mathcal{U})$ defined by $\widetilde\omega_{\underline{n}}(P) = (\lambda^{n})^{*}\omega_{\underline{n}}(P)$ for any $P \in \mathcal{E}_{\underline{n}}^{\,\mathcal{U}}$, $\lambda = \lambda_{0,1}$.
In addition, if $\omega$ is a differential $p$-form with compact support, then so is $\widetilde\omega$.\vspace{-.5ex}
\end{defn}
\begin{lem}\label{lem:de-rham-stability}
There is a homomorphism $D_{\mathcal{U}} : \Omega_{\Cubical}^{\ast}(\mathcal{U}) \to \Omega_{\Cubical}^{\ast}(\mathcal{U})$ such that $\diff{D_{\mathcal{U}}(\omega)_{\underline{n}}} + D_{\mathcal{U}}(\diff\omega)_{\underline{n}} = \widetilde\omega_{\underline{n}}-\omega_{\underline{n}}$ and $D_{\mathcal{U}}(\Omega_{\Cubical_{c}}^{p}(\mathcal{U})) \subset \Omega_{\Cubical_{c}}^{p-1}(\mathcal{U})$ for any $p \!\geq\! 0$.
\end{lem}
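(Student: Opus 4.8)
The plan is to build $D_{\mathcal{U}}$ by the device used for the operator $D_{f}$ preceding Lemma~\ref{lem:homotopy-invariance}, but deforming the source cube instead of the target space. Fix the smooth function $h:I{\times}I\to I$, $h(s,t)=(1{-}s)t+s\,\lambda(t)$ with $\lambda=\lambda_{0,1}$; it lands in $I$ since $h(s,t)$ is a convex combination of $t$ and $\lambda(t)$, it has $h(0,t)=t$, $h(1,t)=\lambda(t)$, and, decisively, $h(s,0)=0$ and $h(s,1)=1$ for all $s$. Put
\[
H_{n}:\simp^{n+1}\cong I{\times}\simp^{n}\longrightarrow\simp^{n},\qquad
H_{n}(s,t_{1},\dots,t_{n})=\bigl(h(s,t_{1}),\dots,h(s,t_{n})\bigr),
\]
so that $H_{n}(0,-)=\id$ and $H_{n}(1,-)=\lambda^{n}$, and for $\omega\in\Omega_{\Cubical}^{p}(\mathcal{U})$ and an $n$-plot $P\in\mathcal{E}_{\underline{n}}^{\,\mathcal{U}}$ set $D_{\mathcal{U}}(\omega)_{\underline{n}}(P)=\int_{I}H_{n}^{*}\bigl(\omega_{\underline{n}}(P)\bigr)\in\wedge^{p-1}(\simp^{n})$, where $\int_{I}$ is the fibre integration over the first coordinate, exactly as in the paragraph preceding Lemma~\ref{lem:homotopy-invariance}.

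First I would check that $D_{\mathcal{U}}(\omega)$ is again a $\mathcal{U}$-form, i.e.\ a natural transformation $\mathcal{E}_{\Cubical}^{\,\mathcal{U}}\to\wedge^{p-1}_{\Cubical}$. The conditions $h(s,0)=0$, $h(s,1)=1$ make the family $\{H_{n}\}$ cubically coherent: $H_{n+1}\circ(\id_{I}{\times}\partial^{\epsilon}_{i})=\partial^{\epsilon}_{i}\circ H_{n}$ and $H_{n}\circ(\id_{I}{\times}\varepsilon_{i})=\varepsilon_{i}\circ H_{n+1}$ for all $i$, $\epsilon$. Together with the naturality of $\omega$ for morphisms of $\Cubical$ and the fact that $\int_{I}$ commutes with pull-back along a map ignoring the first coordinate, this gives $D_{\mathcal{U}}(\omega)_{\underline{m}}(P{\circ}g)=g^{*}D_{\mathcal{U}}(\omega)_{\underline{n}}(P)$ for the generators $g:\underline{m}\to\underline{n}$, hence for all morphisms; and $P{\circ}H_{n}$ still maps into one member of $\mathcal{U}$, so $D_{\mathcal{U}}(\omega)$ is a $\mathcal{U}$-form. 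Additivity of $D_{\mathcal{U}}$ is clear.

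Next, the chain-homotopy identity is the classical fibre-integration (Stokes) formula $\diff{\int_{I}F^{*}\eta}+\int_{I}F^{*}(\diff\eta)=F_{1}^{*}\eta-F_{0}^{*}\eta$ for a smooth $F:I{\times}M\to N$, taken with $F=H_{n}$, $M=N=\simp^{n}$ and $\eta=\omega_{\underline{n}}(P)$: its right-hand side is $(\lambda^{n})^{*}\omega_{\underline{n}}(P)-\omega_{\underline{n}}(P)=\widetilde\omega_{\underline{n}}(P)-\omega_{\underline{n}}(P)$, while on the left $\diff{\int_{I}H_{n}^{*}\omega_{\underline{n}}(P)}=\diff{D_{\mathcal{U}}(\omega)_{\underline{n}}(P)}$ and, since $(\diff\omega)_{\underline{n}}(P)=\diff{\omega_{\underline{n}}(P)}$ on cubical forms, $\int_{I}H_{n}^{*}\bigl(\diff{\omega_{\underline{n}}(P)}\bigr)=D_{\mathcal{U}}(\diff\omega)_{\underline{n}}(P)$. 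This is exactly $\diff{D_{\mathcal{U}}(\omega)_{\underline{n}}}+D_{\mathcal{U}}(\diff\omega)_{\underline{n}}=\widetilde\omega_{\underline{n}}-\omega_{\underline{n}}$, and it runs parallel to the proof of Lemma~\ref{lem:homotopy-invariance}.

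The step I expect to be the real obstacle is the clause $D_{\mathcal{U}}(\Omega_{\Cubical_{c}}^{p}(\mathcal{U}))\subset\Omega_{\Cubical_{c}}^{p-1}(\mathcal{U})$. For $\omega$ with compact support $K_{\omega}$ one has $\Supp H_{n}^{*}(\omega_{\underline{n}}(P))\subset(P{\circ}H_{n})^{-1}(K_{\omega})$, and $\int_{I}$ merely projects supports along the first coordinate, so $\Supp D_{\mathcal{U}}(\omega)_{\underline{n}}(P)$ is governed by the image of $(P{\circ}H_{n})^{-1}(K_{\omega})$ in $\simp^{n}$ --- which for the naive $H_{n}$ above need not lie in $P^{-1}(K)$ for any single compact $K\subset X$, because $H_{n}$ genuinely moves interior points of $\simp^{n}$. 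The remedy I would pursue is to reorganise $D_{\mathcal{U}}$ through the tent functor $\tentdiv_{\mathcal{U}}$: its attached plot $\hat{P}=P{\circ}\proj_{1}$ is constant in the adjoined coordinate, so that $\hat{P}^{-1}(K_{\omega})=P^{-1}(K_{\omega}){\times}I$ is ``vertical'' and fibre integration of $\omega$-values over the tent returns forms supported in $P^{-1}(K_{\omega})$, the reparametrisation $\lambda$ entering only through the fixed relative diffeomorphisms $\phi_{\sigma}$ of the cubes of $\subdiv^{\mathcal{U}}_{P}(K)$ on the top face. Verifying that this reorganised operator still obeys the identity above --- the interior computation being unchanged --- while keeping $K_{D_{\mathcal{U}}(\omega)}=K_{\omega}$ is the part that genuinely needs care; everything else is the bookkeeping of Lemma~\ref{lem:homotopy-invariance}.
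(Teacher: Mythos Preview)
Your construction of $D_{\mathcal{U}}$ and the verification of the chain-homotopy identity are essentially the paper's own proof: the paper also takes a smooth homotopy $H:I{\times}I\to I$ from $\id$ to $\lambda$, forms $H_{n}:I{\times}\simp^{n}\to\simp^{n}$, sets $D_{\mathcal{U}}(\omega)_{\underline{n}}(P)=\int_{I}H_{n}^{*}\omega_{\underline{n}}(P)$, and runs the same explicit integration-by-parts computation as in Lemma~\ref{lem:homotopy-invariance}. Your extra care with the cubical coherence of $\{H_{n}\}$ (via $h(s,0)=0$, $h(s,1)=1$) is a genuine addition---the paper does not verify this naturality explicitly---and your appeal to the fibre-integration Stokes formula is the clean abstract statement of what the paper writes out in coordinates.

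Where you diverge is the compact-support clause. The paper simply asserts that ``by the above construction of $D_{\mathcal{U}}$, it is clear to see $D_{\mathcal{U}}(\Omega_{\Cubical_{c}}^{p}(\mathcal{U}))\subset\Omega_{\Cubical_{c}}^{p-1}(\mathcal{U})$'' and offers no further argument. Your worry that $\pi_{2}\bigl(H_{n}^{-1}(P^{-1}(K_{\omega}))\bigr)$ need not sit inside $P^{-1}(K_{\omega})$ is well-founded---$H_{n}$ does move interior points---so the paper's ``clear'' is at best an understatement. However, your proposed remedy of routing the construction through $\tentdiv_{\mathcal{U}}$ is misplaced: that apparatus is what the paper deploys in the \emph{next} lemma (Lemma~\ref{lem:de-rham-injectivity}), where one must compare $\omega^{(r)}$ with $\omega^{(r+1)}$ across a genuine subdivision, and the tent provides a plot $\hat{P}=P{\circ}\proj_{1}$ constant in the new coordinate. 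For Lemma~\ref{lem:de-rham-stability} no subdivision is in play; the deformation is purely on the parametrising cube. So rather than importing $\tentdiv_{\mathcal{U}}$, either accept the paper's assertion as stated, or look for a lighter argument controlling supports under the self-homotopy $H_{n}$ of $\simp^{n}$---the tent construction is both heavier and aimed at a different problem.
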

\begin{proof}
Let $H : I{\times}I \to I$ be a smooth homotopy between $\id : I \to I$ and $\lambda : I \to I$, which gives rise to a smooth homotopy $H_{n} : \simp^{n+1} = I {\times}\simp^{n} \to \simp^{n}$ of $\id : \simp^{n} \to \simp^{n}$ and $\lambda^{n} : \simp^{n} \to \simp^{n}$, $n \!\geq\! 0$.
Then we have $H_{n}{\circ}\incl_{0}=\id$ and $H_{n}{\circ}\incl_{1} = \lambda^{n}$, where $\incl_{t} : \simp^{n} \hookrightarrow I{\times}\simp^{n}$ is given by $\incl_{t}(\mathbold{x}) = (t,\mathbold{x})$.
For any {\cubicaltxt} differential $p$-form $\omega : \mathcal{E}^{\,\mathcal{U}}_{\Cubical} \to \wedge^{p}_{\Cubical}$, a {\cubicaltxt} ($p{-}1$)-form $D_{\mathcal{U}}(\omega) : \mathcal{E}^{\,\mathcal{U}}_{\Cubical} \to \wedge^{p-1}_{\Cubical}$ is defined on a plot $P \in \mathcal{E}_{\Cubical}^{\,\mathcal{U}}$, by the following formula.
\par\vskip1ex\noindent
\hfil$\displaystyle
\begin{array}{l}
D_{\mathcal{U}}(\omega)_{\underline{n}}(P) = \displaystyle\int_{I}H^{\ast}\omega_{\underline{n}}(P) : \simp^{n} \to \wedge^{p-1}(T^{*}_{n}),
\\[2ex]
\displaystyle \left[\int_{I}H^{\ast}\omega_{\underline{n}}(P)\right](\mathbold{x}) = \underset{i_{2},\cdots,i_{p}}{\textstyle\sum}\int_{0}^{1}a_{i_{2},\cdots,i_{p}}(t,\mathbold{x})\diff{t}{\cdot}\diff{x_{i_{2}}}\wedge\cdots\wedge\diff{x_{i_{p}}},
\end{array}
$\hfil
\par\vskip1.5ex\noindent
where we assume $H^{\ast}\omega_{\underline{n}}(P)$ $=$ $\underset{i_{2},\cdots,i_{p}}{\textstyle\sum}a_{i_{2},\cdots,i_{p}}(t,\mathbold{x})\diff{t}\wedge\diff{x_{i_{2}}}\wedge\cdots\wedge\diff{x_{i_{p}}} + \underset{i_{1},\cdots,i_{p}}{\textstyle\sum}b_{i_{1},\cdots,i_{p}}(t,\mathbold{x})$ $\diff{x_{i_{1}}}\wedge\cdots\wedge\diff{x_{i_{p}}} : I{\times}\simp^{n} \to \wedge^{p-1}(T^{*}_{n+1})$, $(t,\mathbold{x}) \in I{\times}\simp^{n}$ and $T^{*}_{n+1}=\real\diff{t}{\oplus}\overset{n}{\underset{i=1}{\oplus}}\real\diff{x_{i}}$.

First, let $\incl_{t} : \simp^{n} \to I{\times}\simp^{n}$ be the inclusion defined by $\incl_{t}(\mathbold{x})=(t,\mathbold{x})$ for $t=0, 1$.
By $H{\circ}\incl_{0} = \id$, we have $\omega_{\underline{n}}(P) = \id^{*}\omega_{\underline{n}}(P) = \incl_{0}^{*}H^{\ast}\omega_{\underline{n}}(P) = \underset{i_{1},\cdots,i_{p}}{\textstyle\sum}b_{i_{1},\cdots,i_{p}}(0,\mathbold{x}) \diff{x_{i_{1}}}\wedge\cdots\wedge\diff{x_{i_{p}}}$.
On the other hand by $H{\circ}\incl_{1} = \lambda^{n}$, we have $(\lambda^{n})^{*}\omega_{\underline{n}}(P) = \incl_{1}^{*}H^{\ast}\omega_{\underline{n}}(P) = \underset{i_{1},\cdots,i_{p}}{\textstyle\sum}b_{i_{1},\cdots,i_{p}}(1,\mathbold{x}) \diff{x_{i_{1}}}\wedge\cdots\wedge\diff{x_{i_{p}}}$\, for any $\mathbold{x} \in \simp^{n}$.

Second, by definition, we have
$\diff{H^{\ast}\omega_{\underline{n}}(P)}$ $=$ $\displaystyle\underset{i}{\textstyle\sum}\underset{i_{2},\cdots,i_{p}}{\textstyle\sum}\pder{a_{i_{2},\cdots,i_{p}}}by{x_{i}}(t,\mathbold{x})$ $\diff{x_{i}}\wedge\diff{t}\wedge\diff{x_{i_{2}}}\wedge\cdots\wedge\diff{x_{i_{p}}} + \!\!\!\underset{i_{1},\cdots,i_{p}}{\textstyle\sum}\displaystyle\pder{b_{i_{1},\cdots,i_{p}}}by{t}(t,\mathbold{x}) \displaystyle\diff{t}\wedge\diff{x_{i_{1}}}\wedge\cdots\wedge\diff{x_{i_{p}}} + \underset{i}{\textstyle\sum}\underset{i_{1},\cdots,i_{p}}{\textstyle\sum}\displaystyle\pder{b_{i_{1},\cdots,i_{p}}}by{x_{i}}(t,\mathbold{x}) \diff{x_{i}}\wedge\diff{x_{i_{1}}}\wedge\cdots\wedge\diff{x_{i_{p}}}$, 
and hence we obtain \vspace{.5ex}
$D_{\mathcal{U}}(\diff\omega)_{\underline{n}}(P) = \displaystyle\int_{I}H^{\ast}\diff\omega_{\underline{n}}(P) = -\displaystyle\underset{i}{\textstyle\sum}\underset{i_{2},\cdots,i_{p}}{\textstyle\sum}\int_{I}\pder{a_{i_{2},\cdots,i_{p}}}by{x_{i}}(t,\mathbold{x})\diff{t}{\cdot}\diff{x_{i}}\wedge\diff{x_{i_{2}}}\wedge\cdots\wedge\diff{x_{i_{p}}} + \underset{i_{1},\cdots,i_{p}}{\textstyle\sum}\displaystyle\int_{I}\pder{b_{i_{1},\cdots,i_{p}}}by{t}(t,\mathbold{x})\diff{t}{\cdot}\diff{x_{i_{1}}}\wedge\cdots\wedge\diff{x_{i_{p}}}$, $(t,\mathbold{x}) \in I{\times}\simp^{n}$.

Third, we have \vspace{.5ex}$D_{\mathcal{U}}(\omega)_{\underline{n}}(P) = \displaystyle\underset{i_{2},\cdots,i_{p}}{\textstyle\sum}\int_{I}a_{i_{2},\cdots,i_{p}}(t,\mathbold{x})\diff{t}{\cdot}\diff{x_{i_{2}}}\wedge\cdots\wedge\diff{x_{i_{p}}}$, and hence we obtain 
$\diff{D_{\mathcal{U}}(\omega)_{\underline{n}}}(P) = \displaystyle\underset{i}{\textstyle\sum}\underset{i_{2},\cdots,i_{p}}{\textstyle\sum}\int_{I}\pder{a_{i_{2},\cdots,i_{p}}}by{x_{i}}(t,\mathbold{x})\diff{t}{\cdot}\diff{x_{i}}\wedge\diff{x_{i_{2}}}\wedge\cdots\wedge\diff{x_{i_{p}}}$, $(t,\mathbold{x}) \in I{\times}\simp^{n}$.

Hence \vspace{.5ex}$\left[\diff{D_{\mathcal{U}}(\omega)_{\underline{n}}}(P) + D_{\mathcal{U}}(\diff\omega)_{\underline{n}}(P)\right](\mathbold{x}) = \underset{i_{1},\cdots,i_{p}}{\textstyle\sum}\displaystyle\int_{I}\pder{b_{i_{1},\cdots,i_{p}}}by{t}(t,\mathbold{x})\diff{t}{\cdot}\diff{x_{i_{1}}}\wedge\cdots\wedge\diff{x_{i_{p}}} = \underset{i_{1},\cdots,i_{p}}{\textstyle\sum} b_{i_{1},\cdots,i_{p}}(1,\mathbold{x}) \diff{x_{i_{1}}}\wedge\cdots\wedge\diff{x_{i_{p}}} - \underset{i_{1},\cdots,i_{p}}{\textstyle\sum} b_{i_{1},\cdots,i_{p}}(0,\mathbold{x}) \diff{x_{i_{1}}}\wedge\cdots\wedge\diff{x_{i_{p}}}$, $\mathbold{x} \in \simp^{n}$.\vspace{.5ex}
Thus we obtain $\diff{D_{\mathcal{U}}(\omega)}(P) + D_{\mathcal{U}}(\diff\omega)(P) = \widetilde\omega(P) - \omega(P)$.
By the above construction of $D_{\mathcal{U}}$, it is clear to see $D_{\mathcal{U}}(\Omega_{\Cubical_{c}}^{p}(\mathcal{U})) \subset \Omega_{\Cubical_{c}}^{p-1}(\mathcal{U})$, and it completes the proof of the lemma.\vspace{-1.0ex}
\end{proof}

\begin{rem}
We have $b_{i_{1},\cdots,i_{p}}(1,\mathbold{x})$ $=$ $b_{i_{1},\cdots,i_{p}}(0,\lambda^{n}(\mathbold{x})) \lambda'(x_{i_{1}}){\cdot}\cdots{\cdot}\lambda'(x_{i_{p}})$ for $1 \!\leq\! i_{1} \!<\! \ldots \!<\! i_{p} \leq n$ and $\mathbold{x}=(x_{1},\ldots,x_{n}) \in \simp^{n}$,
since $(\lambda^{n})^{*}\omega_{\underline{n}}(P) = \incl_{1}^{*}H^{\ast}\omega_{\underline{n}}(P)$.\vspace{-1.5ex}
\end{rem}
Let $\omega \in \Omega^{\ast}_{\Cubical}(X)$ and $P \in \mathcal{E}^{X}(\simp^{n})$. 
Then a {\cubicaltxt} complex $K=\{\sigma \,;\, \sigma \!<\! \simp^{n}\}$ derives {\cubicaltxt} subdivisions $K_{r}=(\subdiv^{\mathcal{U}}_{P})^{r}(K)$ and $K_{*}=(\subdiv^{\mathcal{U}}_{P})^{\ast}(K)$ where $K_{*}=K_{r}$ for sufficiently large $r \!\geq\! 0$. 
We define $\omega^{(r)} \in \Omega_{\Cubical}^{p}(\mathcal{U})$, $r \!\geq\! 0$, as follows: for any $\sigma \in K_{r}$, 
\par\vskip1.0ex\noindent
\hfil$
\omega^{(r)}_{\underline{n}}(P)\vert_{\Int\sigma} 
= \hat\omega^{(r)}_{\sigma}(P\vert_{\sigma})\vert_{\Int\sigma},
$\hfil
\par\vskip.5ex\noindent
where $\hat\omega^{(r)}_{\sigma}(P\vert_{\sigma})\vert_{\Int\sigma} = \omega_{\underline{n}}(P\vert_{\sigma}{\circ}\phi_{\sigma}){\circ}\lambda^{n}{\circ}\phi_{\sigma}^{-1} : \Int{\sigma} \overset{\phi_{\sigma}^{-1}}{\homeo} \Int{{\simp}^{n}} \overset{\lambda^{n}}{\rightarrow} \Int{{\simp}^{n}} \xrightarrow{\omega_{\underline{n}}(P{\circ}\phi_{\sigma})} \wedge^{p}$.
Then by definition, $\omega^{(r)}_{\underline{n}}(P)\vert_{\Int\sigma}$ can be smoothly extended to $\partial\sigma$, and hence $\omega^{(r)}_{\underline{n}}(P) : \simp^{n} \to \wedge^{p}_{T_{n}^{*}}$ is well-defined and we obtain $\omega^{(r)} \in \Omega_{\Cubical}^{p}(X)$. \vspace{-1.5ex}
\begin{lem}\label{lem:de-rham-injectivity}
There is a homomorphism \vspace{.2ex}$D^{(r)}_{\mathcal{U}} : \Omega_{\Cubical}^{\ast}(X)\to \Omega_{\Cubical}^{\ast}(X)$ such that $\diff{D^{(r)}_{\mathcal{U}}(\omega)} + D^{(r)}_{\mathcal{U}}(\diff\omega) 
= \omega^{(r+1)}-\omega^{(r)}$ and $D^{(r)}_{\mathcal{U}}(\Omega_{\Cubical_{c}}^{p}(\mathcal{U})) \subset \Omega_{\Cubical_{c}}^{p-1}(\mathcal{U})$ for $p \!\geq\! 0$.\vspace{-1ex}
\end{lem}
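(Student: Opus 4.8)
The plan is to realise $D^{(r)}_{\mathcal{U}}$ as a prism (fibre‑integration) operator over the tent subdivision $\tentdiv^{\mathcal{U}}_{P}(K_{r})$ of $\simp^{n+1}=\simp^{n}{\times}I$, in direct analogy with the operators $D_{f}$ of Lemma \ref{lem:homotopy-invariance} and $D_{\mathcal{U}}$ of Lemma \ref{lem:de-rham-stability}. Given $\omega\in\Omega^{p}_{\Cubical}(X)$ and an $n$‑plot $P$, write $\hat{P}=P{\circ}\proj_{1}\in\mathcal{E}^{X}(\simp^{n+1})$ and let $T_{r}=\tentdiv^{\mathcal{U}}_{P}(K_{r})$, whose bottom face carries the subdivision $K_{r}$ and whose top face carries $K_{r+1}=\subdiv^{\mathcal{U}}_{P}(K_{r})$. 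For each top $(n{+}1)$‑cube $\tau\in T_{r}$ with its fixed relative diffeomorphism $\phi_{\tau}:\simp^{n+1}\twoheadrightarrow\tau$, set, exactly as in the definition of $\omega^{(r)}$,
\[
\Theta_{\underline{n+1}}(P)\vert_{\Int\tau}=\omega_{\underline{n+1}}(\hat{P}\vert_{\tau}{\circ}\phi_{\tau}){\circ}\lambda^{n+1}{\circ}\phi_{\tau}^{-1}:\Int\tau\to\wedge^{p}(T^{*}_{n+1}).
\]
The $\lambda^{n+1}$‑flattening is what forces these local expressions on adjacent top cubes to agree to infinite order along the interior walls of $T_{r}$, so that $\Theta_{\underline{n+1}}(P)$ is a well‑defined smooth $\wedge^{p}$‑valued form on $\simp^{n+1}$; one then defines $D^{(r)}_{\mathcal{U}}(\omega)_{\underline{n}}(P)=\int_{I}\Theta_{\underline{n+1}}(P)$, the fibre integral over the last (tent) coordinate, with the fibre‑integration conventions of Lemma \ref{lem:de-rham-stability}.

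Granting smoothness of $\Theta$, the chain‑homotopy identity is the computation already carried out twice in the paper: splitting $\Theta_{\underline{n+1}}(P)$ into its $\diff{t}$‑part and its $t$‑free part and applying the fundamental theorem of calculus to the fibre integral gives
\[
\diff{D^{(r)}_{\mathcal{U}}(\omega)_{\underline{n}}}(P)+D^{(r)}_{\mathcal{U}}(\diff\omega)_{\underline{n}}(P)=\incl_{1}^{*}\Theta_{\underline{n+1}}(P)-\incl_{0}^{*}\Theta_{\underline{n+1}}(P),
\]
where $\incl_{t}(\mathbold{x})=(\mathbold{x},t)$. It then remains to identify $\incl_{0}^{*}\Theta_{\underline{n+1}}(P)=\omega^{(r)}_{\underline{n}}(P)$ and $\incl_{1}^{*}\Theta_{\underline{n+1}}(P)=\omega^{(r+1)}_{\underline{n}}(P)$. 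Here the structure of $\tentdiv^{\mathcal{U}}_{P}$ enters: $\incl_{\epsilon}$ is a polyhedral morphism $(K_{r+\epsilon},P)\to(T_{r},\hat{P})$ onto the copy of $K_{r}$ (for $\epsilon=0$) or of $K_{r+1}$ (for $\epsilon=1$) sitting in $\simp^{n}{\times}\{\epsilon\}$, with $\hat{P}\circ\incl_{\epsilon}=P$; and since $\lambda(0)=0$, $\lambda(1)=1$ we have $\lambda^{n+1}\vert_{\{t_{n+1}=\epsilon\}}=\lambda^{n}$, so, using naturality of the cubical form $\omega$ together with the collar/flattening properties already underlying the well‑definedness of $\omega^{(r)}$, the piecewise formula for $\incl_{\epsilon}^{*}\Theta$ on each $\Int\sigma$ collapses to $\omega_{\underline{n}}(P\vert_{\sigma}{\circ}\phi_{\sigma}){\circ}\lambda^{n}{\circ}\phi_{\sigma}^{-1}$, which is $\omega^{(r)}$ resp. $\omega^{(r+1)}$ by definition. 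Combining the two displays yields $\diff{D^{(r)}_{\mathcal{U}}(\omega)}+D^{(r)}_{\mathcal{U}}(\diff\omega)=\omega^{(r+1)}-\omega^{(r)}$.

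Naturality of $D^{(r)}_{\mathcal{U}}$ — compatibility with the boundary and degeneracy morphisms of $\Cubical$, and with degenerate plots — follows from the functoriality of $\tentdiv_{\mathcal{U}}$ on $\subdivision_{X}$ recorded above (for $f:(L,Q)\to(K,P)$ one has $\tentdiv_{\mathcal{U}}(f)=f$, and $T_{r}$ depends functorially on the pair $(K_{r},P)$), together with naturality of $\omega$ and of fibre integration; this is the same bookkeeping used for $\omega^{(r)}$ and in Lemma \ref{lem:de-rham-stability}. Finally, if $\omega\in\Omega^{p}_{{\Cubical}_{c}}(\mathcal{U})$ with $K_{\omega}$ compact, then $\Supp{\Theta_{\underline{n+1}}(P)}\subset\hat{P}^{-1}(K_{\omega})$, hence $\Supp{D^{(r)}_{\mathcal{U}}(\omega)_{\underline{n}}(P)}\subset P^{-1}(K_{\omega})$ and $D^{(r)}_{\mathcal{U}}(\omega)\in\Omega^{p-1}_{{\Cubical}_{c}}(\mathcal{U})$ with $K_{D^{(r)}_{\mathcal{U}}(\omega)}=K_{\omega}$, exactly as in Lemma \ref{lem:de-rham-stability}; that $\mathcal{U}$‑restricted forms are carried to $\mathcal{U}$‑restricted forms is built into the tent, since every cube of $T_{r}$ over a cube of $K_{r}$ on which $P$ factors through some $U\in\mathcal{U}$ again has this property for $\hat{P}$.

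I expect the main obstacle to be the smoothness of $\Theta_{\underline{n+1}}(P)$ across the interior walls of $T_{r}$: one must check that the $\lambda^{n+1}$‑twisted local expressions on adjacent top cubes $\tau,\tau'$ of $\tentdiv^{\mathcal{U}}_{P}(K_{r})$ agree to infinite order along $\tau\cap\tau'$, for all three types of wall — between two prisms $\sigma{\times}I$, between a prism and a cone $\rho{\ast}(\mathbold{b}_{\sigma},1)$, and between two cones — and, relatedly, that restricting the $\lambda^{n+1}$‑twisted $(n{+}1)$‑dimensional expressions to the top and bottom faces reproduces the $\lambda^{n}$‑twisted expressions defining $\omega^{(r+1)}$ and $\omega^{(r)}$ on the nose. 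This is the exact analogue, one dimension higher and with the richer combinatorics of the tent, of the smooth‑extension claim made just before the lemma for $\omega^{(r)}$; the remaining ingredients — the fibre‑integration Stokes computation and the compact‑support bookkeeping — are routine repetitions of Lemmas \ref{lem:homotopy-invariance} and \ref{lem:de-rham-stability}.
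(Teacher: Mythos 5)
Your proposal is essentially the paper's own proof: the authors likewise define $\widehat\omega(P)$ piecewise on the top $(n{+}1)$-cubes of $\tentdiv^{\mathcal{U}}_{P}(K_{r})$ via $\omega_{\underline{n+1}}(P{\circ}\proj\vert_{\sigma}{\circ}\phi_{\sigma}){\circ}\lambda^{n+1}{\circ}\phi_{\sigma}^{-1}$, set $D^{(r)}_{\mathcal{U}}(\omega)_{\underline{n}}(P)=\int_{I}\widehat\omega(P)$, and run the same split-into-$\diff{t}$-part fibre-integration computation, identifying the two ends with $\omega^{(r)}$ and $\omega^{(r+1)}$ via $\incl_{0}^{*}\widehat{K}_{r}=K_{r}$ and $\incl_{1}^{*}\widehat{K}_{r}=K_{r+1}$. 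The smoothness-across-walls issue you flag as the main obstacle is treated in the paper only by the assertion that the $\lambda$-flattened local expressions extend smoothly, so you are at the same level of detail as the original.
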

\begin{proof}
For any $\omega \in \Omega_{\Cubical}^{p}(\mathcal{U})$, we define $D^{(r)}_{\mathcal{U}}(\omega) \in \Omega_{\Cubical}^{p}(X)$ as follows:
let $P \in \mathcal{E}^{X}(\simp^{n})$. 
We have a {\cubicaltxt} complex $K=\{\sigma \,;\, \sigma \!<\! \simp^{n}\}$ which derives {\cubicaltxt} subdivisions $K_{r}=(\subdiv^{\mathcal{U}}_{P})^{r}(K)$ of $\simp^{n}$ and $\widehat{K}_{r}=\tentdiv^{\mathcal{U}}_{P}(K_{r})$ of $I{\times}\simp^{n}$ so that $\incl_{0}^{*}\widehat{K}_{r}=K_{r}$ and $\incl_{1}^{*}\widehat{K}_{r}=K_{r+1}$.
Now we define a smooth function $\widehat\omega(P) : I{\times}\simp^{n} \to \wedge^{p}(T^{\ast}_{n+1})$ as follows: for any $\sigma \in \widehat{K}_{r}^{n+1}$, 
\par\vskip1.0ex\noindent
\hfil$
\widehat{\omega}(P)\vert_{\Int\sigma} 
= \widehat{\omega}'_{\sigma}(P{\circ}\proj_{2}\vert_{\sigma})\vert_{\Int\sigma} : I{\times}\simp^{n} \longrightarrow \wedge^{p}(T^{\ast}_{n+1}),
$\hfil
\par\vskip.5ex\noindent
where $\widehat{\omega}'_{\sigma}(P{\circ}\proj_{2}\vert_{\sigma})\vert_{\Int\sigma} = \omega_{\underline{n{+}1}}(P{\circ}\proj_{2}\vert_{\sigma}{\circ}\phi_{\sigma}){\circ}\lambda^{n+1}{\circ}\phi_{\sigma}^{-1} : \Int{\sigma} \overset{\phi_{\sigma}^{-1}} \homeo \Int{{\simp}^{n+1}} \overset{\lambda^{n+1}}{\homeo} \Int{{\simp}^{n+1}}$ $\xrightarrow{\omega_{\underline{n+1}}(P{\circ}{\smallproj_{2}}{\circ}\phi_{\sigma})}$ $\wedge^{p}$.
Then by definition, $\widehat{\omega}'_{\sigma}(P{\circ}\proj_{2}\vert_{\sigma})\vert_{\Int\sigma} $ can be smoothly extended to $\sigma$ and we obtain a smooth function $\widehat{\omega}(P) : I{\times}\simp^{n} \to \wedge^{p}_{T_{n+1}^{*}}$. 

First, a {\cubicaltxt} ($p{-}1$)-form $D^{(r)}_{\mathcal{U}}(\omega)  \in \Omega_{\Cubical}^{p-1}(X)$ is defined as follows: for any {\cubicaltxt} differential $p$-form $\omega : \mathcal{E}^{X}_{\Cubical} \to \wedge^{p}_{\Cubical}$ on a plot $P \in \mathcal{E}_{\Cubical}^{X}$, 
\par\vskip1.0ex\noindent
\hfil$\displaystyle
\begin{array}{l}
D^{(r)}_{\mathcal{U}}(\omega)_{\underline{n}}(P) = \displaystyle\int_{I}\widehat{\omega}(P) : \simp^{n} \to \wedge^{p-1}(T^{*}_{n}),
\\[2ex]
\displaystyle \left[\int_{I}\widehat{\omega}(P)\right](\mathbold{x}) = \underset{i_{2},\cdots,i_{p}}{\textstyle\sum}\int_{0}^{1}a_{i_{2},\cdots,i_{p}}(t,\mathbold{x})\diff{t}{\cdot}\diff{x_{i_{2}}}\wedge\cdots\wedge\diff{x_{i_{p}}},
\end{array}
$\hfil
\par\vskip1.5ex\noindent
where $\widehat{\omega}(P)=\underset{i_{2},\cdots,i_{p}}{\textstyle\sum}a_{i_{2},\cdots,i_{p}}(t,\mathbold{x})\diff{t}\wedge\diff{x_{i_{2}}}\wedge\cdots\wedge\diff{x_{i_{p}}} + \underset{i_{1},\cdots,i_{p}}{\textstyle\sum}b_{i_{1},\cdots,i_{p}}(t,\mathbold{x})$ $\diff{x_{i_{1}}}\wedge\cdots\wedge\diff{x_{i_{p}}} : I{\times}\simp^{n} \to \wedge^{p-1}(T^{*}_{n+1})$, $(t,\mathbold{x}) \in I{\times}\simp^{n}$ and $T^{*}_{n+1}=\real\diff{t}{\oplus}\overset{n}{\underset{i=1}{\oplus}}\real\diff{x_{i}}$.
Then, since $\incl_{0}^{*}\widehat{K}_{r}=K_{r}$ and $\incl_{1}^{*}\widehat{K}_{r}=K_{r+1}$, we easily see that $\omega^{(r)}_{\underline{n}}(P) = \incl_{0}^{*}\,\widehat\omega(P) = \underset{i_{1},\cdots,i_{p}}{\textstyle\sum}b_{i_{1},\cdots,i_{p}}(0,\mathbold{x}) \diff{x_{i_{1}}}\wedge\cdots\wedge\diff{x_{i_{p}}}$ and $\omega^{(r+1)}_{\underline{n}}(P) = \incl_{1}^{*}\,\widehat\omega(P) = \underset{i_{1},\cdots,i_{p}}{\textstyle\sum}b_{i_{1},\cdots,i_{p}}(1,\mathbold{x}) \diff{x_{i_{1}}}\wedge\cdots\wedge\diff{x_{i_{p}}}$.

Second, by definition, we have
$\widehat{\diff{\omega}}(P) = \diff{\widehat\omega}(P) = \displaystyle\underset{i}{\textstyle\sum}\underset{i_{2},\cdots,i_{p}}{\textstyle\sum}\pder{a_{i_{2},\cdots,i_{p}}}by{x_{i}}(t,\mathbold{x})$ $\diff{x_{i}}\wedge\diff{t}\wedge\diff{x_{i_{2}}}\wedge\cdots\wedge\diff{x_{i_{p}}} + \!\!\!\underset{i_{1},\cdots,i_{p}}{\textstyle\sum}\displaystyle\pder{b_{i_{1},\cdots,i_{p}}}by{t}(t,\mathbold{x}) \displaystyle\diff{t}\wedge\diff{x_{i_{1}}}\wedge\cdots\wedge\diff{x_{i_{p}}} + \underset{i}{\textstyle\sum}\underset{i_{1},\cdots,i_{p}}{\textstyle\sum}\displaystyle\pder{b_{i_{1},\cdots,i_{p}}}by{x_{i}}(t,\mathbold{x}) \diff{x_{i}}\wedge\diff{x_{i_{1}}}\wedge\cdots\wedge\diff{x_{i_{p}}}$, 
and hence  
$D^{(r)}_{\mathcal{U}}(\diff\omega)_{\underline{n}}(P) = \displaystyle\int_{I}\widehat{\diff{\omega}}(P) = -\displaystyle\underset{i}{\textstyle\sum}\underset{i_{2},\cdots,i_{p}}{\textstyle\sum}\int_{I}\pder{a_{i_{2},\cdots,i_{p}}}by{x_{i}}(t,\mathbold{x})\diff{t}\cdot\diff{x_{i}}\wedge\diff{x_{i_{2}}}\wedge\cdots\wedge\diff{x_{i_{p}}} + \underset{i_{1},\cdots,i_{p}}{\textstyle\sum}\displaystyle\int_{I}\pder{b_{i_{1},\cdots,i_{p}}}by{t}(t,\mathbold{x})\diff{t}{\cdot}\diff{x_{i_{1}}}\wedge\cdots\wedge\diff{x_{i_{p}}}$, $(t,\mathbold{x}) \in I{\times}\simp^{n}$.

Third, we have $D^{(r)}_{\mathcal{U}}(\omega)_{\underline{n}}(P) = \displaystyle\underset{i_{2},\cdots,i_{p}}{\textstyle\sum}\int_{I}a_{i_{2},\cdots,i_{p}}(t,\mathbold{x})\diff{t}{\cdot}\diff{x_{i_{2}}}\wedge\cdots\wedge\diff{x_{i_{p}}}$,\vspace{.5ex} and hence we obtain 
$\diff{D^{(r)}_{\mathcal{U}}(\omega)_{\underline{n}}}(P) = \displaystyle\underset{i}{\textstyle\sum}\underset{i_{2},\cdots,i_{p}}{\textstyle\sum}\int_{I}\pder{a_{i_{2},\cdots,i_{p}}}by{x_{i}}(t,\mathbold{x})\diff{t}{\cdot}\diff{x_{i}}\wedge\diff{x_{i_{2}}}\wedge\cdots\wedge\diff{x_{i_{p}}}$, $(t,\mathbold{x}) \in I{\times}\simp^{n}$.

Hence \vspace{.5ex}$\left[\diff{D^{(r)}_{\mathcal{U}}(\omega)_{\underline{n}}}(P) + D^{(r)}_{\mathcal{U}}(\diff\omega)_{\underline{n}}(P)\right](\mathbold{x}) = \underset{i_{1},\cdots,i_{p}}{\textstyle\sum}\displaystyle\int_{I}\pder{b_{i_{1},\cdots,i_{p}}}by{t}(t,\mathbold{x})\diff{t}{\cdot}\diff{x_{i_{1}}}\wedge\cdots\wedge\diff{x_{i_{p}}}$ $=$ $\underset{i_{1},\cdots,i_{p}}{\textstyle\sum} b_{i_{1},\cdots,i_{p}}(1,\mathbold{x}) \diff{x_{i_{1}}}\wedge\cdots\wedge\diff{x_{i_{p}}} - \underset{i_{1},\cdots,i_{p}}{\textstyle\sum} b_{i_{1},\cdots,i_{p}}(0,\mathbold{x}) \diff{x_{i_{1}}}\wedge\cdots\wedge\diff{x_{i_{p}}}$, $\mathbold{x} \in \simp^{n}$.\vspace{.5ex}
Thus we obtain $\diff{D^{(r)}_{\mathcal{U}}(\omega)}(P) + D^{(r)}_{\mathcal{U}}(\diff\omega)(P) = \omega^{(r+1)}(P) - \omega^{(r)}(P)$.\vspace{.5ex}
By the above construction of $D^{(r)}_{\mathcal{U}}$, it is clear to see that $D^{(r)}_{\mathcal{U}}(\Omega_{\Cubical_{c}}^{p}(\mathcal{U})) \subset \Omega_{\Cubical_{c}}^{p-1}(\mathcal{U})$.\vspace{-1ex}
\end{proof}

\begin{thm}\label{thm:excision}
The restriction $\res : \Omega^{\ast}_{\Cubical}(X) \rightarrow \Omega^{\ast}_{\Cubical}(\mathcal{U})$ induces an isomorphism of {\cubicaltxt} de Rham cohomologies $\res^{*} : H^{\ast}_{\Cubical}(X) \to H^{\ast}_{\Cubical}(\mathcal{U})$.
In addition, $\res$ induces a map $\res : \Omega^{\ast}_{\Cubical_{c}}(X) \rightarrow \Omega^{\ast}_{\Cubical_{c}}(\mathcal{U})$ which further induces an isomorphism $\res^{*} : H^{\ast}_{\Cubical_{c}}(X) \to H^{\ast}_{\Cubical_{c}}(\mathcal{U})$.
\end{thm}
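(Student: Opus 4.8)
The plan is to exhibit an explicit cochain-level inverse $E$ of $\res$ — a ``spreading'' operator which turns a form that is only defined on $\mathcal{U}$-small plots into a genuine {\cubicaltxt} form on $X$ by subdividing each plot until it becomes $\mathcal{U}$-small — and then to deduce, from the chain homotopies already built in Lemmas~\ref{lem:de-rham-stability} and~\ref{lem:de-rham-injectivity}, that $\res$ and $E$ are mutually inverse on cohomology.

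First I would define $E:\Omega_{\Cubical}^{p}(\mathcal{U})\to\Omega_{\Cubical}^{p}(X)$ by $E(\omega)=\omega^{(*)}$, formed exactly as the forms $\omega^{(r)}$ in the paragraph preceding Lemma~\ref{lem:de-rham-injectivity} but now starting from $\omega\in\Omega_{\Cubical}^{p}(\mathcal{U})$: for a plot $P:\simp^{n}\to X$, Theorem~\ref{thm:subdivision} furnishes the $\mathcal{U}$-fine subdivision $K_{*}=(\subdiv^{\mathcal{U}}_{P})^{*}(K)$ of $\simp^{n}$, and since each top cell $\sigma$ of $K_{*}$ satisfies $P\vert_{\sigma}\in\mathcal{E}^{\,\mathcal{U}}$, the plot $P\vert_{\sigma}{\circ}\phi_{\sigma}$ is $\mathcal{U}$-small, so $\omega_{\underline{n}}(P\vert_{\sigma}{\circ}\phi_{\sigma})$ is defined and one sets $E(\omega)_{\underline{n}}(P)\vert_{\Int\sigma}=\omega_{\underline{n}}(P\vert_{\sigma}{\circ}\phi_{\sigma}){\circ}\lambda^{n}{\circ}\phi_{\sigma}^{-1}$. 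As in that paragraph, the flattening by $\lambda^{n}$ — whose derivative vanishes near $\partial{I}$, cf.\ the Remark after Lemma~\ref{lem:de-rham-stability} — makes these pieces agree to infinite order along shared faces, so $E(\omega)_{\underline{n}}(P)$ is a well-defined smooth map $\simp^{n}\to\wedge^{p}(T^{*}_{n})$; functoriality of $\subdiv^{*}_{\mathcal{U}}$ gives naturality in the {\cubicaltxt} variable, and since $\diff$ commutes with pullback, $E$ is a cochain map. Two identities then fall out: $\res\,{\circ}\,E=\widetilde{(\cdot)}$ on $\Omega_{\Cubical}^{\ast}(\mathcal{U})$ — for a $\mathcal{U}$-small $P$ the subdivision $K_{*}$ is already trivial, whence $E(\omega)_{\underline{n}}(P)=(\lambda^{n})^{*}\omega_{\underline{n}}(P)=\widetilde\omega_{\underline{n}}(P)$ — and $E\,{\circ}\,\res=(\cdot)^{(*)}$ on $\Omega_{\Cubical}^{\ast}(X)$.

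Surjectivity of $\res^{*}$ is then immediate: given a cocycle $\bar\omega\in Z_{\Cubical}^{p}(\mathcal{U})$, the form $E(\bar\omega)$ is a cocycle on $X$ and $\res\,E(\bar\omega)=\widetilde{\bar\omega}$, which is cohomologous to $\bar\omega$ by Lemma~\ref{lem:de-rham-stability} (since $\diff\bar\omega=0$ forces $\widetilde{\bar\omega}-\bar\omega=\diff D_{\mathcal{U}}(\bar\omega)$). For injectivity, let $\omega\in Z_{\Cubical}^{p}(X)$ with $\res\omega=\diff\eta$, $\eta\in\Omega_{\Cubical}^{p-1}(\mathcal{U})$. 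On one hand $\omega^{(*)}=E(\res\omega)=\diff E(\eta)$ is a coboundary on $X$. On the other hand $\omega$ is cohomologous to $\omega^{(*)}$, because $\omega^{(*)}-\omega^{(0)}=\diff\bigl(\textstyle\sum_{r\geq0}D^{(r)}_{\mathcal{U}}(\omega)\bigr)$ by summing Lemma~\ref{lem:de-rham-injectivity} over $r$ — the sum is finite on each plot, since $(\subdiv^{\mathcal{U}}_{P})^{r}(K)$ stabilises to the $\mathcal{U}$-fine $K_{*}$ and, once it is $\mathcal{U}$-fine, $\tentdiv^{\mathcal{U}}_{P}$ returns the cylinder $K_{*}{\times}I$, on which $\widehat\omega(P)$ is the pullback of the value of $\omega$ on a plot factoring through the degeneracy $\varepsilon_{n+1}$ and hence has vanishing $\diff t$-component, so $D^{(r)}_{\mathcal{U}}(\omega)(P)=0$ — while $\omega^{(0)}-\omega=\widetilde\omega-\omega=\diff D(\omega)$, where $D(\omega)_{\underline{n}}(P)=\int_{I}H^{\ast}\omega_{\underline{n}}(P)$ is the homotopy operator from the proof of Lemma~\ref{lem:de-rham-stability}, whose construction uses nothing about $\mathcal{U}$ and so applies verbatim on all of $\Omega_{\Cubical}^{\ast}(X)$. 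Hence $[\omega]=[\omega^{(*)}]=0$. In other words $\res_{*}E_{*}=\widetilde{(\cdot)}_{*}=\id$ on $H_{\Cubical}^{\ast}(\mathcal{U})$ and $E_{*}\res_{*}=(\cdot)^{(*)}_{*}=\id$ on $H_{\Cubical}^{\ast}(X)$, so $\res^{*}$ is an isomorphism with inverse $E_{*}$.

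The compact-support statement follows by running the same argument inside $\Omega_{\Cubical_{c}}^{\ast}$: $\res$ trivially preserves compact supports, $E$ does so with $K_{E(\omega)}=K_{\omega}$, and the inclusions $D_{\mathcal{U}}(\Omega_{\Cubical_{c}}^{p}(\mathcal{U}))\subset\Omega_{\Cubical_{c}}^{p-1}(\mathcal{U})$, $D^{(r)}_{\mathcal{U}}(\Omega_{\Cubical_{c}}^{p}(\mathcal{U}))\subset\Omega_{\Cubical_{c}}^{p-1}(\mathcal{U})$ and the analogous property of $D$ are exactly the compact-support clauses of Lemmas~\ref{lem:de-rham-stability} and~\ref{lem:de-rham-injectivity}. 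The one genuinely delicate point I expect is the construction of $E$ — that the cell-by-cell formula glues to a globally smooth form on $X$ that is natural in the {\cubicaltxt} variable — but this is the same verification already carried out for the $\omega^{(r)}$ just before Lemma~\ref{lem:de-rham-injectivity}, now merely with $\omega$ drawn from $\Omega_{\Cubical}^{\ast}(\mathcal{U})$; the only other thing needing attention is the finiteness on each plot of $\sum_{r}D^{(r)}_{\mathcal{U}}$, which rests on the stabilisation asserted in Theorem~\ref{thm:subdivision}.
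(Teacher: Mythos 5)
Your proposal is correct and follows essentially the same route as the paper: you build the same extension operator (the paper's $\omega^{\ast}$) via the $\mathcal{U}$-fine subdivision of Theorem~\ref{thm:subdivision}, obtain surjectivity from $\res\,{\circ}\,E=\widetilde{(\cdot)}$ together with Lemma~\ref{lem:de-rham-stability}, and obtain injectivity from the telescoping sum of Lemma~\ref{lem:de-rham-injectivity} plus the $\mathcal{U}=\{X\}$ case of Lemma~\ref{lem:de-rham-stability}. The only (welcome) refinements are that you treat the case $\res\omega=\diff\eta$ directly via $E(\diff\eta)=\diff E(\eta)$ instead of the paper's ``we may assume $\res(\omega)=0$'', and that you justify the plotwise finiteness of $\sum_{r}D^{(r)}_{\mathcal{U}}$, which the paper leaves implicit.
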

\begin{proof}
For any $\omega \in \Omega_{\Cubical}^{p}(\mathcal{U})$, we define $\omega^{\ast} \in \Omega_{\Cubical}^{p}(X)$ as follows:
let $P \in \mathcal{E}^{X}(\simp^{n})$. 
Then we obtain a {\cubicaltxt} complex $K=\{\sigma \,;\, \sigma \!<\! \simp^{n}\}$ which derives a {\cubicaltxt} subdivision $K_{*}=(\subdiv^{\mathcal{U}}_{P})^{\ast}(K)$. 
We define {\cubicaltxt} differential $p$-forms $\omega^{\ast} \in \Omega_{\Cubical}^{p}(\mathcal{U})$ as follows: for any $\sigma \in K_{*}$, 
\par\vskip1ex\noindent
\hfil$
\omega^{\ast}_{\underline{n}}(P)\vert_{\Int\sigma} 
= \hat\omega^{\ast}_{\sigma}(P\vert_{\sigma})\vert_{\Int\sigma},
$\hfil
\par\vskip.5ex\noindent
where $\hat\omega^{\ast}_{\sigma}(P\vert_{\sigma})\vert_{\Int\sigma} = \omega_{\underline{n}}(P\vert_{\sigma}{\circ}\phi_{\sigma}){\circ}\lambda^{n}{\circ}\phi_{\sigma}^{-1} : \Int{\sigma} \overset{\phi_{\sigma}^{-1}} \homeo \Int{{\simp}^{n}} \overset{\lambda^{n}}{\homeo} \Int{{\simp}^{n}} \xrightarrow{\omega_{\underline{n}}(P{\circ}\phi_{\sigma})} \wedge^{p}$.
Then by definition, $\omega^{\ast}_{\underline{n}}(P)\vert_{\Int\sigma}$ can be uniquely extended to $\partial\sigma$ and we obtain $\omega^{\ast}_{\underline{n}}(P) : \simp^{n} \to \wedge^{p}_{T_{n}^{*}}$ so that $\omega^{\ast} \in \Omega_{\Cubical}^{p}(X)$ whose restriction to $\Omega_{\Cubical}^{p}(\mathcal{U})$ equals, by definition, to $\widetilde\omega$ with a $(p{-}1)$-form $D_{\mathcal{U}}(\omega) \in \Omega_{\Cubical}^{p-1}(\mathcal{U})$ satisfying $\diff{D_{\mathcal{U}}(\omega)}  = \widetilde\omega-\omega$ if $\diff\omega=0$, by Lemma \ref{lem:de-rham-stability}.
If $\diff\omega=0$, then $\diff{\hat\omega^{\ast}}=0$, and hence $\diff{\omega^{\ast}}=0$.
Thus the restriction $\res : \Omega^{\ast}_{\Cubical}(X) \rightarrow \Omega^{\ast}_{\Cubical}(\mathcal{U})$ induces an epimorphism $\res^{*} : H^{\ast}_{\Cubical}(X) \to H^{\ast}_{\Cubical}(\mathcal{U})$ of {\cubicaltxt} de Rham cohomologies.

So we are left to show that $\res^{*} : H^{\ast}_{\Cubical}(X) \to H^{\ast}_{\Cubical}(\mathcal{U})$ is a monomorphism: let $\omega \in \Omega_{\Cubical}^{p}(X)$. 
Then we obtain a {\cubicaltxt} differential $p$-forms $\omega^{(r)} \in \Omega_{\Cubical}^{p}(\mathcal{U})$ and $\omega^{*} \in \Omega_{\Cubical}^{p}(\mathcal{U})$ so that $\omega^{(r)}=\omega^{*}$ for sufficiently large $r \geq 0$. 
By Lemma \ref{lem:de-rham-injectivity}, there is a $(p{-}1)$-form $D_{\mathcal{U}}^{(r)}(\omega) \in \Omega_{\Cubical}^{p-1}(X)$ such that $\diff{D_{\mathcal{U}}^{(r)}(\omega)} = \omega^{(r+1)}-\omega^{(r)}$ if $\diff\omega=0$.
If we assume $\res^{\ast}([\omega])=0$, then we may assume $\res(\omega)=0$ and $\diff\omega=0$,\vspace{.5ex} and so we obtain $\omega^{\ast}=0$ and $\omega=\diff\left\{\underset{r=0}{\overset{N}{\textstyle\sum}}\,D_{\mathcal{U}}^{(r)}(\omega)-D_{\!\{X\}\!}(\omega)\right\}$ for sufficiently large $N \!\geq\! 0$,\vspace{.5ex} in other words, $\omega$ is an exact form and cohomologous to zero.
Thus $\res^{*} : H^{\ast}_{\Cubical}(X) \to H^{\ast}_{\Cubical}(\mathcal{U})$ is an monomorphism. 
\end{proof}

\section{Mayer-Vietoris sequence and Theorem of de Rham}

\begin{thm}\label{thm:cubical-open-covering}
Let \,$\mathcal{U}=\{U_{1},U_{2}\}$ be any open covering of a differentiable space $X$. 
The canonical inclusions $i_{t} : U_{1} \cap U_{2} \hookrightarrow U_{t}$ and $j_{t} : U_{t} \hookrightarrow X$, $t=1, 2$, induce $\psi^{\natural} : \Omega_{\Cubical}^{p}(\mathcal{U}) \to \Omega_{\Cubical}^{p}(U_{1}) \oplus \Omega_{\Cubical}^{p}(U_{2})$ and $\phi^{\natural} : \Omega_{\Cubical}^{p}(U_{1}) \oplus \Omega_{\Cubical}^{p}(U_{2}) \to \Omega_{\Cubical}^{p}(U_{1}\cap U_{2})$ by $\psi^{\natural}(\omega) = i_{1}^{\sharp}\omega{\oplus}i_{2}^{\sharp}\omega$ and $\phi^{\natural}(\eta_{1}{\oplus}\eta_{2}) = j_{1}^{\sharp}\eta_{1}-j_{2}^{\sharp}\eta_{2}$. 
Then we obtain the following long exact sequence. 
\par\vskip1ex\noindent\hfil
$\begin{array}{l}
H_{\Cubical}^{0}(X) \rightarrow \cdots \rightarrow H_{\Cubical}^{p}(X) \xrightarrow{\psi^{*}} H_{\Cubical}^{p}(U_{1}){\oplus}H_{\Cubical}^{p}(U_{2}) \xrightarrow{\phi^{*}} H_{\Cubical}^{p}(U_{1} \cap U_{2})
\\[1.5ex]\qquad\qquad
\rightarrow H_{\Cubical}^{p+1}(X) \xrightarrow{\psi^{*}} H_{\Cubical}^{p+1}(U_{1}){\oplus}H_{\Cubical}^{p+1}(U_{2}) \xrightarrow{\phi^{*}} H_{\Cubical}^{p+1}(U_{1} \cap U_{2}) \rightarrow \cdots,
\end{array}$\hfil
\par\vskip1.5ex\noindent
where $\psi^{*}$ and $\phi^{*}$ are induced from $\psi^{\natural}$ and $\phi^{\natural}$.\vspace{-.5ex}
\end{thm}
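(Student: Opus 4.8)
Write $U_{0}=U_{1}\cap U_{2}$. The plan is to exhibit a left exact sequence of cochain complexes
$$0\longrightarrow\Omega^{\ast}_{\Cubical}(\mathcal{U})\xrightarrow{\psi^{\natural}}\Omega^{\ast}_{\Cubical}(U_{1})\oplus\Omega^{\ast}_{\Cubical}(U_{2})\xrightarrow{\phi^{\natural}}\Omega^{\ast}_{\Cubical}(U_{0}),$$
to run the zig-zag lemma on the short exact sequence it determines onto $\Im{\phi^{\natural}}$, and then to convert the outcome into the asserted sequence by means of the excision isomorphism of Theorem~\ref{thm:excision}. Both $\psi^{\natural}$ and $\phi^{\natural}$ are cochain maps, each being precomposition of a natural transformation with a differentiable inclusion while the external derivative is computed plotwise. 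Left exactness is formal, and rests on the single observation that every plot $P\in\mathcal{E}^{\mathcal{U}}_{\Cubical}(\underline{n})$ has image in $U_{1}$ or in $U_{2}$, hence factors through one of the inclusions $U_{t}\hookrightarrow X$; so a form with respect to the covering $\mathcal{U}$ is precisely a pair of forms on $U_{1}$ and $U_{2}$ agreeing after restriction to $U_{0}$. In particular $\psi^{\natural}$ is monic, and, the two inclusions of $U_{0}$ into $X$ through $U_{1}$ and through $U_{2}$ being equal, $\phi^{\natural}\circ\psi^{\natural}=0$; conversely a pair $\eta^{(1)}\oplus\eta^{(2)}\in\Ker{\phi^{\natural}}$ (i.e.\ with equal restrictions to $U_{0}$) glues to a natural transformation $\omega\in\Omega^{\ast}_{\Cubical}(\mathcal{U})$ by $\omega_{\underline{n}}(P)=\eta^{(t)}_{\underline{n}}(P)$ whenever $\Im{P}\subset U_{t}$, with $\psi^{\natural}(\omega)=\eta^{(1)}\oplus\eta^{(2)}$; hence $\Ker{\phi^{\natural}}=\Im{\psi^{\natural}}$.

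The heart of the argument is the behaviour of $\phi^{\natural}$ on the right. Fix a partition of unity $\{\rho^{U_{1}},\rho^{U_{2}}\}$ belonging to $\mathcal{U}$ — it exists by Theorem~\ref{thm:partition_of_unity}, with the refined estimate $\Supp{\rho^{U_{t}}_{\underline{n}}(P)}\subset P^{-1}(G_{U_{t}})$ for closed sets $G_{U_{t}}\subset U_{t}$ — and, given $\kappa\in\Omega^{p}_{\Cubical}(U_{0})$, I will build $\eta^{(1)}\in\Omega^{p}_{\Cubical}(U_{1})$ and $\eta^{(2)}\in\Omega^{p}_{\Cubical}(U_{2})$ that are, informally, ``$\rho^{U_{2}}\!\cdot\kappa$'' and ``$-\rho^{U_{1}}\!\cdot\kappa$'' extended by zero. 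The delicate point — which does not arise in the non-cubical version, where plots may be restricted to arbitrary open subdomains — is to make sense of such a product on a plot $P\colon\simp^{n}\to U_{1}$ whose image leaves $U_{0}$, since $\kappa$ only consumes plots into $U_{0}$. For this I run the subdivision procedure from the proof of Theorem~\ref{thm:excision} with respect to the open covering $\{U_{0},\,U_{1}\setminus G_{U_{2}}\}$ of $U_{1}$: apply $\kappa$, through the $\lambda^{n}$-collapse exactly as in the definition of $\omega^{\ast}$ there, on the cubics that $P$ sends into $U_{0}$, and take the contribution to be $0$ on the cubics sent into $U_{1}\setminus G_{U_{2}}$, where $\rho^{U_{2}}$ already vanishes; it is the $\lambda^{n}$-collapse that makes the pieces extend smoothly and match on shared faces. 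Naturality in $P$ and the estimate $\Supp{\eta^{(1)}_{\underline{n}}(P)}\subset P^{-1}(G_{U_{2}})\subset P^{-1}(U_{0})$ then follow as in that proof, and symmetrically for $\eta^{(2)}$. Restricting to a plot $Q\colon\simp^{n}\to U_{0}$ and using $\rho^{U_{1}}+\rho^{U_{2}}=1$, one reads off $\phi^{\natural}(\eta^{(1)}\oplus\eta^{(2)})=\widetilde\kappa$, where $\widetilde\kappa_{\underline{n}}(Q)=(\lambda^{n})^{\ast}\kappa_{\underline{n}}(Q)$. So $\Im{\phi^{\natural}}$ is a subcomplex of $\Omega^{\ast}_{\Cubical}(U_{0})$ containing the image of the operation $\kappa\mapsto\widetilde\kappa$; since that operation is a monic cochain map which is chain homotopic to the identity by Lemma~\ref{lem:de-rham-stability}, with the homotopy $D_{\mathcal{U}}$ commuting with restriction to $U_{0}$, a short diagram chase shows that $\Im{\phi^{\natural}}\hookrightarrow\Omega^{\ast}_{\Cubical}(U_{0})$ is a quasi-isomorphism.

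It remains to apply the zig-zag lemma to $0\to\Omega^{\ast}_{\Cubical}(\mathcal{U})\to\Omega^{\ast}_{\Cubical}(U_{1})\oplus\Omega^{\ast}_{\Cubical}(U_{2})\to\Im{\phi^{\natural}}\to0$, to replace $H^{\ast}(\Im{\phi^{\natural}})$ by $H^{\ast}_{\Cubical}(U_{0})$ through the quasi-isomorphism just obtained, and to replace $H^{\ast}_{\Cubical}(\mathcal{U})$ by $H^{\ast}_{\Cubical}(X)$ through the excision isomorphism $\res^{\ast}$ of Theorem~\ref{thm:excision}; then $\psi^{\ast}$ is $\res^{\ast}$ followed by restriction and $\phi^{\ast}$ is the induced difference and connecting map, which yields exactly the stated long exact sequence. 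The one genuinely new obstacle, compared with the earlier theorems, is the well-definedness of ``$\rho\cdot\kappa$'' as a cubical form on $U_{1}$, i.e.\ the smoothness and the face-matching of the subdivided pieces; I expect to dispose of it by quoting the $\omega^{\ast}$-construction of Theorem~\ref{thm:excision} almost verbatim, the only new ingredient being multiplication by a partition-of-unity $0$-form, which merely shrinks supports and so causes no trouble at the faces. Everything else — the zig-zag lemma, the compatibility of $D_{\mathcal{U}}$ with restriction, and the two substitutions — is routine.
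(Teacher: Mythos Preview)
Your left-exactness argument matches the paper's. The divergence is at the right-hand term: the paper claims that $\phi^{\natural}$ is already surjective at the chain level, obtaining the honest short exact sequence
\[
0\longrightarrow\Omega_{\Cubical}^{p}(\mathcal{U})\xrightarrow{\psi^{\natural}}\Omega_{\Cubical}^{p}(U_{1})\oplus\Omega_{\Cubical}^{p}(U_{2})\xrightarrow{\phi^{\natural}}\Omega_{\Cubical}^{p}(U_{0})\longrightarrow0
\]
and then concluding by the snake lemma together with Theorem~\ref{thm:excision}. For the lift of $\kappa\in\Omega_{\Cubical}^{p}(U_{0})$ the paper writes, for any plot $P_{t}\colon\simp^{n}\to U_{t}$, the pointwise formula $\kappa^{(t)}_{\underline{n}}(P_{t})(\mathbold{x})=(-1)^{t-1}\rho^{(3-t)}_{\underline{n}}(P_{t})(\mathbold{x})\cdot\kappa_{\underline{n}}(P_{t})(\mathbold{x})$ on $P_{t}^{-1}(U_{3-t})$ and $0$ off $\Supp\rho^{(3-t)}_{\underline{n}}(P_{t})$, verbatim from Theorem~\ref{thm:nice-open-covering-1}. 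The point you single out as ``delicate'' --- that the symbol $\kappa_{\underline{n}}(P_{t})$ has no a~priori meaning when $\Im P_{t}\not\subset U_{0}$, since a cubical form cannot be restricted to an arbitrary open sub-region of the cube --- is passed over without comment; the paper treats the expression as if it were locally defined. Your subdivision-based workaround is thus a more cautious treatment of precisely this step; what it buys is an honest construction of the lift $\eta^{(t)}$, at the cost of hitting only $\widetilde\kappa$ rather than $\kappa$ itself and hence needing the extra quasi-isomorphism argument.

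One remark on that quasi-isomorphism step. Knowing that $\Im\phi^{\natural}$ contains every $\widetilde\kappa$ and that $(\,)^{\sim}\simeq\id$ via $D_{\mathcal{U}}$ gives surjectivity of $H^{\ast}(\Im\phi^{\natural})\to H^{\ast}_{\Cubical}(U_{0})$ for free, but injectivity requires that $D_{\mathcal{U}}$ carry $\Im\phi^{\natural}$ into itself, so that $(\,)^{\sim}$ restricted to $\Im\phi^{\natural}$ is still homotopic to the identity \emph{within} that subcomplex. This does hold --- the operator $D$ of Lemma~\ref{lem:de-rham-stability} is integration against a fixed self-homotopy of the cube and is therefore natural under pullback along each $i_{t}$, whence $D\circ\phi^{\natural}=\phi^{\natural}\circ(D\oplus D)$ --- but that is the actual content behind your phrase ``$D_{\mathcal{U}}$ commuting with restriction to $U_{0}$'', and it should be made explicit, since without it the ``short diagram chase'' does not close.
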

\begin{proof}
Since $H_{\Cubical}^{\ast}(X) = H_{\Cubical}^{\ast}(\mathcal{U})$ by Theorem \ref{thm:excision}, we are left to show long exact sequence
\par\vskip0ex\noindent
\hfil$
0 \longrightarrow \Omega_{\Cubical}^{p}(\mathcal{U}) 
\overset{\psi^{\natural}}\longrightarrow \Omega_{\Cubical}^{p}(U_{1}) \oplus \Omega_{\Cubical}^{p}(U_{2}) 
\overset{\phi^{\natural}}\longrightarrow \Omega_{\Cubical}^{p}(U_{0}) \longrightarrow 0, \quad U_{0}=U_{1} \cap U_{2}.
$\hfil
\begin{description}
\item[(exactness at $\Omega_{\Cubical}^{p}(\mathcal{U})$)]
Assume $\psi^{\natural}(\omega)=0$, and so $j_{t}^{\sharp}\omega=0$ for $t=1, 2$.
Then for any $P : \simp^{n} \to X$, $P \in \mathcal{E}_{\Cubical}^{\,\mathcal{U}}$, we have either $\Im{P} \subset U_{1}$ or $\Im{P} \subset U_{2}$.
Therefore, we may assume either $P \in \mathcal{E}_{\Cubical}^{U_{0}}$ or $P \in \mathcal{E}_{\Cubical}^{U_{1}}$.
In each case, we have $\omega_{\underline{n}}(P) = 0$, which implies that $\omega=0$.
Thus $\psi^{\natural}$ is monic.
\vspace{.5ex}
\item[(exactness at $\Omega_{\Cubical}^{p}(U_{1}) \oplus \Omega_{\Cubical}^{p}(U_{2})$)]
Assume $\phi^{\natural}(\eta^{(1)}{\oplus}\eta^{(2)})=0$, and so $i_{1}^{\sharp}\eta^{(1)}=i_{2}^{\sharp}\eta^{(2)}$.
Then we can construct a differential $p$-form $\omega \in \Omega_{\Cubical}^{p}(\mathcal{U})$ as follows: for any $P \in \mathcal{E}_{\Cubical}^{\,\mathcal{U}}$, we have $\Im{P} \subset U_{t}$ for either $t=1$ or $2$.
Using this $t$, we define $\omega_{\underline{n}}(P)=\eta^{(t)}_{\underline{n}}(P)$.
If $\Im{P} \subset U_{1}$ and $\Im{P} \subset U_{2}$, then we have $\Im{P} \subset U_{1} \cap U_{2}$, and hence $\eta^{(1)}_{\underline{n}}(P)=\eta^{(2)}_{\underline{n}}(P)$, since $i_{1}^{\sharp}\eta^{(1)}=i_{2}^{\sharp}\eta^{(2)}$.
It implies that $\omega$ is well-defined and that $\psi^{\natural}(\omega) = \eta^{(1)}{\oplus}\eta^{(2)}$.
The converse is clear and we have $\Ker{\phi^{\natural}}=\Im{\psi^{\natural}}$.
\vspace{.5ex}
\item[(exactness at $\Omega_{\Cubical}^{p}(U_{0})$)]
Assume $\kappa \in \Omega_{\Cubical}^{p}(U_{0})$. 
We define $\kappa^{(t)} \in \Omega_{\Cubical}^{p}(U_{t})$, $t = 1, 2$ as follows:
for any $P_{t} \in \mathcal{E}_{\Cubical}^{U_{t}}$, we define $\kappa^{(t)}_{\underline{n}}(P_{t})(\mathbold{x})$ by $(-1)^{t-1}\rho^{(3{-}t)}_{P_{t}}(\mathbold{x}){\cdot}\kappa_{\underline{n}}(P_{t})(\mathbold{x})$ if $\mathbold{x} \in P_{t}^{-1}(U_{3-t})$ and by $0$ if $\mathbold{x} \not\in \Supp{\rho^{3-t}_{P_{t}}}$. Hence $\kappa^{(t)}$ is well-defined satisfying $i_{1}^{\sharp}\kappa^{(1)} - i_{2}^{\sharp}\kappa^{(2)} = \kappa$, and we obtain $\kappa = \phi^{\natural}(\kappa^{(1)}{\oplus}\kappa^{(2)})$.
Thus $\phi^{\natural}$ is an epimorphism.
\end{description}
Since $\psi^{\natural}$ and $\phi^{\natural}$ are clearly cochain maps, we obtain the desired long exact sequence.
\end{proof}

Now let us turn our attention to the differential forms with compact support.
Let $X=(X,\mathcal{E}^{X})$ be a weakly-separated differentiable space.
\begin{defn}
Let $U$ be an open set in $X$, $F \!\subset\! U$ a closed set in $X$ and \,$\mathcal{U}$ an open covering of $U$.
We denote by $\Omega^{p}_{\Cubical_{c}}(\mathcal{U};F)$ the set of all $\omega \in \Omega^{p}_{\Cubical_{c}}(\mathcal{U})$ satisfying $\Supp\omega_{\underline{n}}(P) \subset P^{-1}(F)$ for any $P \in \mathcal{E}(\simp^{n})$.
For example, any $\omega \in \Omega^{p}_{\Cubical_{c}}(\mathcal{U})$ is in $\Omega^{p}_{\Cubical_{c}}(\mathcal{U};F)$ if $F \!\supset\! K_{\omega}$.
We denote by $H^{\ast}_{\Cubical_{c}}(\mathcal{U};F)$ the cohomology of $\Omega^{\ast}_{\Cubical_{c}}(\mathcal{U};F)$ a differential subalgebra of $\Omega^{\ast}_{\Cubical_{c}}(\mathcal{U})$.
\end{defn}

\begin{defn}\label{defn:cubical-induced-map-1}
Let $U$ and $V$ be open sets and $F \!\subset\! U$ and $G \!\subset\! V$ be closed sets in $X$ so that $(U,F) \subset (V,G)$, and $j : (U,F) \hookrightarrow (V,G)$ be the canonical inclusion.
Let \,$\mathcal{U}$ and $\mathcal{V}$ be open coverings of $U$ and $V$, respectively, satisfying $F \cap W = \emptyset$ for any $W \in \mathcal{V}\smallsetminus\mathcal{U}$.
Then a homomorphism $j_{\sharp} : \Omega^{p}_{\Cubical_{c}}(\mathcal{U};F) \to \Omega^{p}_{\Cubical_{c}}(\mathcal{V};G)$ is defined as follows: for any $\omega \in \Omega^{p}_{\Cubical_{c}}(\mathcal{U};F)$, $j_{\sharp}\omega \in \Omega^{p}_{\Cubical_{c}}(\mathcal{V};G)$ is given, for $Q \in \mathcal{E}^{\mathcal{V}}(\simp^{m})$, by
\par\vskip.5ex\noindent
\hfil$
\begin{cases}
(j_{\sharp}\omega)_{\underline{m}}(Q) = \omega_{\underline{m}}(Q), & \text{if \ $\Im{Q} \subset W$ for some $W \!\in \mathcal{U}$,}
\\[1ex]
(j_{\sharp}\omega)_{\underline{m}}(Q) = 0,
& \text{if \ $\Im{Q} \subset W$ for some $W \!\in \mathcal{V} \!\smallsetminus\! \mathcal{U}$}
\end{cases}
$\hfil
\par\vskip1.0ex\noindent
with $K_{j_{\sharp}\omega} = K_{\omega} \subset F \subset G$.
In particular, for any $\omega \in \Omega^{p}_{\Cubical_{c}}(\mathcal{U})$, we have $\omega \in \Omega^{p}_{\Cubical_{c}}(\mathcal{U};K_{\omega})$, and so we obtain $j_{\sharp}\omega \in \Omega^{p}_{\Cubical_{c}}(j_{\sharp}\,\mathcal{U}_{\omega};K_{\omega}) \subset \Omega^{p}_{\Cubical_{c}}(j_{\sharp}\,\mathcal{U}_{\omega})$, \ $j_{\sharp}\,\mathcal{U}_{\omega}=\mathcal{U}\cup \{V \!\smallsetminus\! K_{\omega}\}$.\vspace{-.5ex}
\end{defn}
\begin{rem}
In Definition \ref{defn:cubical-induced-map-1}, the map $j_{\sharp}$ induced from $j : (U,F) \hookrightarrow (V,G)$ satisfies that $(j_{\sharp}\omega)_{\underline{m}}(j{\circ}Q) = \omega_{\underline{m}}(Q)$ for any $m \!\geq\! 0$ and $Q \in \mathcal{E}^{\mathcal{U}}(\simp^{m})$.\vspace{-.5ex}
\end{rem}

\begin{prop}\label{prop:zero-extension}
Let $X=(X,\mathcal{E}^{X})$ be a weakly-separated differentiable space and $U$ and $V$ open in $X$.
Then the correspondence $\Omega_{{\Cubical}_{c}}^{\ast}(U) \ni \omega \mapsto j_{\sharp}\omega \in \Omega_{\Cubical_{c}}^{\ast}(j_{\sharp}\,\mathcal{U}_{\omega})$ induced from the canonical inclusion $j : U \hookrightarrow V$ induces a homomorphism $j_{*} : H_{{\Cubical}_{c}}^{*}(U) \to H_{{\Cubical}_{c}}^{*}(V)$, since there is a canonical isomorphism $H_{{\Cubical}_{c}}^{*}(j_{\sharp}\,\mathcal{U}_{\omega}) \cong H_{{\Cubical}_{c}}^{*}(V)$ by Theorem \ref{thm:excision}.
\end{prop}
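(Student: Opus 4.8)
The plan is to define $j_{*}[\omega] \in H^{*}_{\Cubical_{c}}(V)$ by transporting the class $[j_{\sharp}\omega]$ along the excision isomorphism of Theorem~\ref{thm:excision}, and then to verify that the result depends neither on the representative $\omega$ nor on the compact set used to build the covering $j_{\sharp}\,\mathcal{U}_{\omega}=\{U, V\smallsetminus K_{\omega}\}$.

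First I would record the elementary properties of $j_{\sharp}$. Given $\omega\in\Omega^{p}_{\Cubical_{c}}(U)$ with compact support $K_{\omega}\subset U$, weak-separatedness of $X$ makes $K_{\omega}$ closed, so $V\smallsetminus K_{\omega}$ is open and $\{U, V\smallsetminus K_{\omega}\}$ is an open covering of $V$ (here $K_{\omega}\subset U\subset V$ is used, so the hypothesis $F\cap W=\emptyset$ of Definition~\ref{defn:cubical-induced-map-1} holds). On a plot $Q$ with $\Im Q\subset U\cap(V\smallsetminus K_{\omega})=U\smallsetminus K_{\omega}$ one has $\Supp\omega_{\underline{m}}(Q)\subset Q^{-1}(K_{\omega})=\emptyset$, so the two clauses of Definition~\ref{defn:cubical-induced-map-1} agree and $j_{\sharp}\omega\in\Omega^{p}_{\Cubical_{c}}(j_{\sharp}\,\mathcal{U}_{\omega})$ is well defined. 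The same local description (identity on plots into $U$, zero on plots into $V\smallsetminus K_{\omega}$) shows that $j_{\sharp}$ is $\real$-linear and commutes with $\diff$, because $\diff\omega$ again has support in $K_{\omega}$. Hence $\diff\omega=0$ implies $\diff(j_{\sharp}\omega)=0$, so $[j_{\sharp}\omega]\in H^{p}_{\Cubical_{c}}(j_{\sharp}\,\mathcal{U}_{\omega})$, and by Theorem~\ref{thm:excision} I set $j_{*}[\omega]:=(\res^{*})^{-1}[j_{\sharp}\omega]\in H^{p}_{\Cubical_{c}}(V)$.

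Next I would prove independence of choices. If $K\subset K'$ are two compact subsets of $U$ that both support a fixed closed $\omega$, then the coverings $\mathcal{W}=\{U,V\smallsetminus K\}$ and $\mathcal{W}'=\{U,V\smallsetminus K'\}$ of $V$ satisfy $\mathcal{E}_{\Cubical}^{\mathcal{W}'}\subset\mathcal{E}_{\Cubical}^{\mathcal{W}}$, so restriction is a cochain map $r:\Omega^{*}_{\Cubical_{c}}(\mathcal{W})\to\Omega^{*}_{\Cubical_{c}}(\mathcal{W}')$ with $r\circ\res=\res$ as maps out of $\Omega^{*}_{\Cubical_{c}}(V)$, and $r$ sends the form $j_{\sharp}\omega$ built with $K$ to the one built with $K'$. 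Since both restrictions $\Omega^{*}_{\Cubical_{c}}(V)\to\Omega^{*}_{\Cubical_{c}}(\mathcal{W})$ and $\to\Omega^{*}_{\Cubical_{c}}(\mathcal{W}')$ induce isomorphisms on cohomology (Theorem~\ref{thm:excision}), so does $r^{*}$, whence $(\res^{*})^{-1}[j_{\sharp}\omega]$ is the same computed with $K$ or with $K'$; two arbitrary compact supports are dominated by their union, so $j_{*}[\omega]$ is independent of the support chosen. To see it depends only on $[\omega]$, write $\omega=\omega'+\diff\eta$ with $\eta\in\Omega^{p-1}_{\Cubical_{c}}(U)$, choose a single compact $K\subset U$ supporting $\omega$, $\omega'$ and $\eta$, and work with the one covering $\{U,V\smallsetminus K\}$: there $j_{\sharp}\omega-j_{\sharp}\omega'=j_{\sharp}(\diff\eta)=\diff(j_{\sharp}\eta)$, so $[j_{\sharp}\omega]=[j_{\sharp}\omega']$ and $j_{*}[\omega]=j_{*}[\omega']$. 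Linearity of $j_{*}$ is the same bookkeeping: for $[\omega_{1}],[\omega_{2}]$ pick a common compact support, use that $j_{\sharp}$ is additive with respect to the corresponding covering, and apply the linear map $(\res^{*})^{-1}$; this also makes $j_{*}$ degree-preserving.

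I expect the one genuinely delicate point — the main obstacle — to be the compatibility of the excision isomorphisms of Theorem~\ref{thm:excision} with refinements of coverings of $V$, i.e. that for $\mathcal{W}'$ refining $\mathcal{W}$ the triangle formed by the two restrictions out of $\Omega^{*}_{\Cubical_{c}}(V)$ and $r:\Omega^{*}_{\Cubical_{c}}(\mathcal{W})\to\Omega^{*}_{\Cubical_{c}}(\mathcal{W}')$ commutes on cohomology. This is forced by the construction used to prove Theorem~\ref{thm:excision} — the homotopy operators $D_{\mathcal{U}}$ and $D^{(r)}_{\mathcal{U}}$ of Lemmas~\ref{lem:de-rham-stability} and~\ref{lem:de-rham-injectivity} are defined plot-by-plot and are natural in the covering — so it requires only a short remark rather than fresh work, and everything else reduces to the support bookkeeping above.
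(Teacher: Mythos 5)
Your proposal is correct and follows essentially the same route as the paper: both hinge on passing to the common finer covering $\{U,V\smallsetminus K\}$ with $K$ the union of the relevant compact supports, and on the fact that the excision isomorphisms of Theorem \ref{thm:excision} are compatible with restriction to a finer covering (which holds at the cochain level since all the maps are literally restrictions, so the two-out-of-three argument applies). You are somewhat more thorough than the paper, which only verifies additivity and leaves the independence of $j_{*}[\omega]$ from the choice of $K_{\omega}$ and of the representative implicit; your support-bookkeeping for those checks is sound.
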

\begin{proof}
Let $\omega, \eta \in \Omega_{{\Cubical}_{c}}^{\ast}(U)$. 
Then $K=K_{\omega} \cup K_{\eta}$ is compact in $U$ and hence in $X$.
Let $\mathcal{U}=\{U,V \smallsetminus K\}$, which is a finer open covering of $\mathcal{U}_{\omega}$ and $\mathcal{U}_{\eta}$, and hence both isomorphisms $H_{{\Cubical}_{c}}^{*}(V) \to H_{{\Cubical}_{c}}^{*}(\mathcal{U}_{\omega})$ and $H_{{\Cubical}_{c}}^{*}(V) \to H_{{\Cubical}_{c}}^{*}(\mathcal{U}_{\eta})$ defined in Theorem \ref{thm:excision} go through the isomorphism $H_{{\Cubical}_{c}}^{*}(V) \to H_{{\Cubical}_{c}}^{*}(\mathcal{U})$.
Thus the homomorphisms $H_{{\Cubical}_{c}}^{*}(\mathcal{U}) \to H_{{\Cubical}_{c}}^{*}(\mathcal{U}_{\omega})$ and $H_{{\Cubical}_{c}}^{*}(\mathcal{U}) \to H_{{\Cubical}_{c}}^{*}(\mathcal{U}_{\eta})$ are also isomorphisms.
By definition, $j_{\sharp}(\omega{+}\eta) = j_{\sharp}(\omega)+j_{\sharp}(\eta)$ in $\Omega_{{\Cubical}_{c}}^{\ast}(\mathcal{U})$, and hence $j_{*}([\omega{+}\eta]) = j_{*}([\omega])+j_{*}([\eta])$ in $H_{{\Cubical}_{c}}^{*}(X)$ for any $[\omega], [\eta] \in H_{{\Cubical}_{c}}^{*}(U)$.\vspace{-.5ex}
\end{proof}

\begin{thm}\label{thm:cubical-open-covering2}
Let \,$\mathcal{U}=\{U_{1},U_{2}\}$ be an open covering of a weakly-separated differentiable space $X$ with a normal partition of unity $\{\rho^{(1)},\rho^{(2)}\}$ belonging to \,$\mathcal{U}$, i.e., there are closed subsets $\{G_{1},G_{2}\}$ such that $G_{t} \subset U_{t}$ and $\Supp\rho_{\underline{n}}^{(t)}(P) \subset P^{-1}(G_{t})$ for any $P \in \mathcal{E}(\simp^{n})$, $t=1, 2$.
Then we have $G_{1} \cup G_{2} = X$.
Let $G_{0}=G_{1} \cap G_{2} \subset U_{0}=U_{1} \cap U_{2}$.
The canonical inclusions $i_{t} : U_{1} \cap U_{2} \hookrightarrow U_{t}$ and $j_{t} : U_{t} \hookrightarrow X$, $t=1, 2$, induce $\phi_{*} : H_{{\Cubical}_{c}}^{p}(U_{0}) \to H_{{\Cubical}_{c}}^{p}(U_{1}) \oplus H_{{\Cubical}_{c}}^{p}(U_{2})$ and $\psi_{*} : H_{{\Cubical}_{c}}^{p}(U_{1}) \oplus H_{{\Cubical}_{c}}^{p}(U_{2}) \to H_{{\Cubical}_{c}}^{p}(X)$ by $\phi_{*}([\omega]) = i_{1*}[\omega]{\oplus}i_{2*}[\omega]$ and $\psi_{*}([\eta_{1}]{\oplus}[\eta_{2}]) = j_{1*}[\eta_{1}]-j_{2*}[\eta_{2}]$.
Then we obtain the following long exact sequence. 
\par\vskip1.0ex\noindent
\hfil$\begin{array}{l}
H_{{\Cubical}_{c}}^{0}(U_{0}) \rightarrow \cdots \rightarrow H_{{\Cubical}_{c}}^{p}(U_{0}) 
\xrightarrow{\phi_{*}} H_{{\Cubical}_{c}}^{p}(U_{1}){\oplus}H_{{\Cubical}_{c}}^{p}(U_{2}) 
\xrightarrow{\psi_{*}} H_{{\Cubical}_{c}}^{p}(X)
\\[1.5ex]\qquad\qquad
\xrightarrow{\diff{}_{*}} H_{{\Cubical}_{c}}^{p+1}(U_{0}) 
\xrightarrow{\phi_{*}} H_{{\Cubical}_{c}}^{p+1}(U_{1}){\oplus}H_{{\Cubical}_{c}}^{p+1}(U_{2}) 
\xrightarrow{\psi_{*}} H_{{\Cubical}_{c}}^{p+1}(X) \rightarrow \cdots.
\end{array}$\hfil
\end{thm}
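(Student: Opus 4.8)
The plan is to carry over the proof of Theorem~\ref{thm:nice-open-covering-2} to the cubical setting. The one genuinely new point is that a compactly-supported cubical form cannot be extended by zero across an open inclusion to an honest cubical form on the larger space, only to a form relative to a refinement of the given covering; this is exactly the difficulty that Definition~\ref{defn:cubical-induced-map-1}, Proposition~\ref{prop:zero-extension} and the excision isomorphism of Theorem~\ref{thm:excision} are built to absorb, and I would use them to give meaning to the zero-extension homomorphisms below and to identify the various covering-relative compact-support cohomologies with $H^{\ast}_{\Cubical_{c}}(U_{t})$ and $H^{\ast}_{\Cubical_{c}}(X)$.

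First I would dispose of the preliminary claim $G_{1}\cup G_{2}=X$: for $x\in X$, evaluating $\rho^{(1)}_{\underline 0}(c_{x})+\rho^{(2)}_{\underline 0}(c_{x})=1$ on the constant $0$-plot $c_{x}$ forces $\rho^{(t)}_{\underline 0}(c_{x})(\ast)\neq 0$ for some $t$; since $\Supp\rho^{(t)}_{\underline 0}(c_{x})\subset c_{x}^{-1}(G_{t})$ and this support is then all of $\{\ast\}$, we get $x\in G_{t}$. In particular $G_{0}=G_{1}\cap G_{2}$ is closed in $X$ with $G_{0}\subset U_{1}\cap U_{2}=U_{0}$.

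The main step is to produce a short exact sequence of cochain complexes
\[
0\longrightarrow\Omega^{p}_{\Cubical_{c}}(U_{0})\overset{\phi_{\natural}}\longrightarrow\Omega^{p}_{\Cubical_{c}}(U_{1})\oplus\Omega^{p}_{\Cubical_{c}}(U_{2})\overset{\psi_{\natural}}\longrightarrow\Omega^{p}_{\Cubical_{c}}(\mathcal{U})\longrightarrow 0 ,
\]
with $\phi_{\natural}(\omega)=i_{1\sharp}\omega\oplus i_{2\sharp}\omega$ and $\psi_{\natural}(\eta_{1}\oplus\eta_{2})=j_{1\sharp}\eta_{1}-j_{2\sharp}\eta_{2}$, the zero-extensions $i_{t\sharp}$, $j_{t\sharp}$ being the maps of Definition~\ref{defn:cubical-induced-map-1}, interpreted via the refinements $\{U_{t},X\smallsetminus K\}$ (over compact $K$) and the excision isomorphisms $H^{\ast}_{\Cubical_{c}}(\{U_{t},X\smallsetminus K\})\cong H^{\ast}_{\Cubical_{c}}(X)$ of Theorem~\ref{thm:excision}, as in Proposition~\ref{prop:zero-extension}; both maps are clearly cochain maps. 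Injectivity of $\phi_{\natural}$ and exactness at the middle go exactly as in Theorem~\ref{thm:nice-open-covering-2}: the identity $(i_{t\sharp}\omega)_{\underline n}(i_{t}{\circ}Q)=\omega_{\underline n}(Q)$ (the Remark after Definition~\ref{defn:cubical-induced-map-1}) gives injectivity, and if $j_{1\sharp}\eta_{1}=j_{2\sharp}\eta_{2}$ then $K_{\eta_{1}}=K_{\eta_{2}}\subset U_{0}$, so the common values of $\eta_{1}$ and $\eta_{2}$ on plots landing in $U_{0}$ assemble into a form $\omega\in\Omega^{p}_{\Cubical_{c}}(U_{0})$ with $i_{t\sharp}\omega=\eta_{t}$. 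For surjectivity of $\psi_{\natural}$, given $\kappa\in\Omega^{p}_{\Cubical_{c}}(\mathcal{U})$ I would set $\kappa^{(t)}_{\underline n}(P_{t})=(-1)^{t-1}\rho^{(t)}_{\underline n}(P_{t})\cdot\kappa_{\underline n}(j_{t}{\circ}P_{t})$ for $P_{t}\in\mathcal{E}_{\Cubical}^{U_{t}}$; normality ($\Supp\rho^{(t)}_{\underline n}(P_{t})\subset P_{t}^{-1}(G_{t})$ with $G_{t}\subset U_{t}$) makes $\kappa^{(t)}$ a genuine element of $\Omega^{p}_{\Cubical_{c}}(U_{t})$ with $K_{\kappa^{(t)}}=K_{\kappa}\cap G_{t}$, and $\rho^{(1)}_{\underline n}(P_{t})+\rho^{(2)}_{\underline n}(P_{t})=1$ yields $j_{1\sharp}\kappa^{(1)}-j_{2\sharp}\kappa^{(2)}=\kappa$.

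Finally I would take the long exact cohomology sequence associated to this short exact sequence and substitute $H^{\ast}_{\Cubical_{c}}(\mathcal{U})\cong H^{\ast}_{\Cubical_{c}}(X)$ from Theorem~\ref{thm:excision}, checking by naturality of excision that $\phi_{\natural}$ induces $\phi_{*}=i_{1*}\oplus i_{2*}$ and $\psi_{\natural}$ induces $\psi_{*}=j_{1*}-j_{2*}$ with $i_{t*}$, $j_{t*}$ as in Proposition~\ref{prop:zero-extension}; this is the stated sequence. I expect the main obstacle to be precisely the bookkeeping compressed into the previous paragraph: organizing the zero-extension homomorphisms into well-defined cochain maps that form a short exact sequence even though the support of a form, hence the refinement of $\mathcal{U}$ needed to extend it by zero, varies with the form. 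Weak-separatedness (compact sets are closed in $X$), the normality of $\{\rho^{(1)},\rho^{(2)}\}$ (so that $\rho^{(t)}\kappa$ is supported in $G_{t}\subset U_{t}$ and hence really comes from $U_{t}$), and the excision theorem all enter here essentially.
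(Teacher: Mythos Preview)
Your diagnosis of the difficulty is exactly right, but the proposed cure does not work: the sequence
\[
0\longrightarrow\Omega^{p}_{\Cubical_{c}}(U_{0})\overset{\phi_{\natural}}\longrightarrow\Omega^{p}_{\Cubical_{c}}(U_{1})\oplus\Omega^{p}_{\Cubical_{c}}(U_{2})\overset{\psi_{\natural}}\longrightarrow\Omega^{p}_{\Cubical_{c}}(\mathcal{U})\longrightarrow 0
\]
does not exist as a sequence of cochain complexes, because $\phi_{\natural}$ and $\psi_{\natural}$ are not well-defined maps into those targets. Given $\omega\in\Omega^{p}_{\Cubical_{c}}(U_{0})$ and a plot $P:\Box^{n}\to U_{t}$ whose image meets $K_{\omega}$ but is not contained in $U_{0}$, there is no value to assign to $(i_{t\sharp}\omega)_{\underline n}(P)$: the cubical formalism gives you no way to restrict $P$ to a sub-domain, so the two clauses of Definition~\ref{defn:cubical-induced-map-1} leave this $P$ uncovered. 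The zero-extension only produces a form in $\Omega^{p}_{\Cubical_{c}}(\{U_{0},U_{t}\!\smallsetminus\!K_{\omega}\})$, and that covering varies with $\omega$; you cannot ``interpret via refinements and excision'' at the cochain level and still have a single short exact sequence of complexes from which to extract a long exact sequence.

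The paper resolves this by decoupling the two roles your sequence is trying to play. For each choice of closed sets $G'_{t}\supset G_{t}$ in $U_{t}$ it sets up \emph{fixed} coverings $\mathcal{U}_{0}=\{U_{0}\}$, $\mathcal{U}_{t}=\{U_{0},U_{t}\!\smallsetminus\!G'_{3-t}\}$, $\mathcal{U}_{3}=\{U_{0},U_{1}\!\smallsetminus\!G'_{2},U_{2}\!\smallsetminus\!G'_{1}\}$ and proves a genuine short exact sequence
\[
0\to\Omega^{p}_{\Cubical_{c}}(\mathcal{U}_{0};G'_{0})\to\Omega^{p}_{\Cubical_{c}}(\mathcal{U}_{1};G'_{1})\oplus\Omega^{p}_{\Cubical_{c}}(\mathcal{U}_{2};G'_{2})\to\Omega^{p}_{\Cubical_{c}}(\mathcal{U}_{3};X)\to 0,
\]
where now $\phi_{\natural}$, $\psi_{\natural}$ are honest cochain maps because the support constraint $G'_{0}\subset G'_{3-t}$ makes the ``otherwise'' clause of the zero-extension always applicable. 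This yields a long exact sequence in the auxiliary groups $H^{\ast}_{\Cubical_{c}}(\mathcal{U}_{t};G'_{t})$, which are \emph{not} identified with $H^{\ast}_{\Cubical_{c}}(U_{t})$; instead the paper builds a commutative ladder comparing them and then proves exactness of the desired sequence \emph{element by element}, at each step enlarging $G'_{t}$ to absorb the supports of the finitely many forms in play (e.g.\ $G'_{t}=G_{t}\cup K_{\omega}$, or $G'_{t}=G_{t}\cup K_{\eta^{(t)}}$, etc.). Your surjectivity argument via $\kappa^{(t)}=(-1)^{t-1}\rho^{(t)}\kappa$ is exactly what the paper uses at that step; what you are missing is this auxiliary layer of support-constrained complexes and the element-by-element chase.
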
\vspace{1.5ex}
\begin{proof}
For any closed subsets $G'_{t} \supset G_{t}$ in $U_{t}$, there is a following short exact sequence.
\par\vskip.5ex\noindent
\hfil$
0 \longrightarrow \Omega_{\Cubical_{c}}^{p}(\mathcal{U}_{0};G'_{0}) 
\overset{\phi_{\natural}}\longrightarrow \Omega_{\Cubical_{c}}^{p}(\mathcal{U}_{1};G'_{1}) \oplus \Omega_{\Cubical_{c}}^{p}(\mathcal{U}_{2};G'_{2}) 
\overset{\psi_{\natural}}\longrightarrow \Omega_{\Cubical_{c}}^{p}(\mathcal{U}_{3};X) \longrightarrow 0, 
$\hfil
\par\vskip.5ex\noindent
where $G'_{0}=G'_{1} \cap G'_{2}$, \,$\mathcal{U}_{0}=\{U_{0}\}$, \,$\mathcal{U}_{t}=\{U_{0},U_{t}\!\smallsetminus\!G'_{3-t}\}$, $t=1, 2$ and \,$\mathcal{U}_{3}=\{U_{0},U_{1}\!\smallsetminus\!G'_{2},U_{2}\!\smallsetminus\!G'_{1}\}$, which are open coverings of $U_{0}$, $U_{t}$ and $X$, respectively.
\begin{description}
\item[(exactness at $\Omega_{\Cubical_{c}}^{p}(\mathcal{U}_{0};G'_{0})$)]
Assume $\phi_{\natural}(\omega)=0$.
Then $i_{1\sharp}(\omega) = i_{2\sharp}(\omega) = 0$.
Since $i_{1\sharp}(\omega)$ is an extension of $\omega$, we obtain $\omega=0$.
Thus $\phi_{\natural}$ is a monomorphism.
\vspace{.5ex}
\item[(exactness at $\Omega_{\Cubical_{c}}^{p}(\mathcal{U}_{1};G'_{1}) \oplus \Omega_{\Cubical_{c}}^{p}(\mathcal{U}_{2};G'_{2})$)]
Assume $\psi_{\natural}(\eta^{(1)}{\oplus}\eta^{(2)})=0$.
Then we have $j_{1\sharp}(\eta^{(1)})=j_{2\sharp}(\eta^{(2)})$.
For any plot $P : \simp^{n} \to X$, we obtain 
$j_{1\sharp}(\eta^{(1)})_{\underline{n}}(P)=j_{2\sharp}(\eta^{(2)})_{\underline{n}}(P)$.
So, for any plot $Q : \simp^{m} \to U_{0}$, $\eta^{(1)}_{B}(i_{1}{\circ}Q) = j_{1}^{\sharp}\eta^{(1)}_{\underline{m}}(j_{1}{\circ}i_{1}{\circ}Q) = j_{2}^{\sharp}\eta^{(2)}_{\underline{m}}(j_{2}{\circ}i_{2}{\circ}Q) = \eta^{(2)}_{\underline{m}}(i_{2}{\circ}Q)$.
Then, we define $\eta^{(0)} \in \Omega^{p}_{\Cubical}(U_{0})$ by $\eta^{(0)}_{\underline{m}}(Q)$ $=$ $\eta^{(1)}_{\underline{m}}(i_{1}{\circ}Q)$ $=$ $\eta^{(2)}_{\underline{m}}(i_{2}{\circ}Q)$.
On the other hand, $K_{j_{t\sharp}\eta^{(t)}} = K_{\eta^{(t)}}$ by definition, and hence we obtain 
\par\vskip1ex\noindent\hfil
$\Supp{\eta^{(0)}_{\underline{m}}(Q)} = \Supp{\eta^{(1)}_{\underline{m}}(i_{1}{\circ}Q)} = \Supp{\eta^{(2)}_{\underline{m}}(i_{2}{\circ}Q)} \subset Q^{-1}(K_{\eta^{(1)}} \cap K_{\eta^{(2)}})$.
\hfil\par\vskip1ex\noindent
Then we have $\eta^{(0)} \in \Omega_{\Cubical_{c}}^{p}(\mathcal{U}_{0})$, for $K_{\eta^{(0)}}=K_{\eta^{(1)}} \cap K_{\eta^{(2)}}$ is compact in $U_{0}$, which satisfies $\phi_{\natural}(\eta^{(0)})=(\eta^{(1)},\eta^{(2)})$.
Thus $(\eta^{(1)},\eta^{(2)})$ is in the image of $\phi_{\natural}$.
The other direction is clear by definition and it implies the  exactness at $\Omega_{\Cubical_{c}}^{p}(\mathcal{U}_{1};G'_{1}) \oplus \Omega_{\Cubical_{c}}^{p}(\mathcal{U}_{2};G'_{2})$.
\vspace{.5ex}
\item[(exactness at $\Omega_{\Cubical_{c}}^{p}(\mathcal{U}_{3};X)$)]
Assume $\kappa \in \Omega_{\Cubical_{c}}^{p}(\mathcal{U}_{3};X)$.
For any plot $P_{t} : \simp^{n_{t}} \to U_{t}$, we define $\kappa^{(t)}_{\underline{n_{t}}}(P_{t})(\mathbold{x})$ by $(-1)^{t-1}\rho^{(t)}_{\underline{n_{t}}}(P_{t})(\mathbold{x}){\cdot}\kappa_{\underline{n_{t}}}(j_{t}{\circ}P_{t})(\mathbold{x})$ if $\mathbold{x} \in P_{t}^{-1}(U_{0})$ and by $0$ if $\mathbold{x} \not\in \Supp{\rho_{\underline{n_{t}}}^{(t)}}(P_{t})$.
Then $\kappa^{(t)}$ is a differential $p$-form on $U_{t}$ and $\kappa^{(t)} \in \Omega_{{\Cubical}_{c}}^{p}(U_{t})$ for $K_{\kappa^{(t)}} = K_{\kappa} \cap G_{t} \subset G'_{t}$ is compact in $U_{t}$.
Then we have $\psi_{\natural}(\kappa^{(1)}{\oplus}\kappa^{(2)}) = \kappa$, and hence $\kappa$ is in the image of $\psi_{\natural}$.
Thus $\psi_{\natural}$ is an epimorphism.\vspace{-.5ex}
\end{description}
Since $\phi_{\natural}$ and $\psi_{\natural}$ are clearly cochain maps, we obtain the following long exact sequence.
\par\vskip.5ex\noindent\hfil
$\begin{array}{l}
H^{0}_{\Cubical_{c}}(\mathcal{U}_{0};G'_{0}) \rightarrow 
\cdots \rightarrow H^{p}_{\Cubical_{c}}(\mathcal{U}_{0};G'_{0})
\xrightarrow{\overline{\phi}_{*}} H^{p}_{\Cubical_{c}}(\mathcal{U}_{1};G'_{1}){\oplus}H^{p}_{\Cubical_{c}}(\mathcal{U}_{2};G'_{2})
\xrightarrow{\overline{\psi}_{*}} H_{{\Cubical}_{\Cubical_{c}}}^{p}(\mathcal{U}_{3})
\\[1ex]\qquad\qquad
\xrightarrow{\overline{\diff{}}_{*}} H^{p+1}_{\Cubical_{c}}(\mathcal{U}_{0};G'_{0}) 
\xrightarrow{\overline{\phi}_{*}} H^{p+1}_{\Cubical_{c}}(\mathcal{U}_{1};G'_{1}){\oplus}H^{p+1}_{\Cubical_{c}}(\mathcal{U}_{2};G'_{2}) 
\xrightarrow{\overline{\psi}_{*}} H_{{\Cubical}_{\Cubical_{c}}}^{p+1}(\mathcal{U}_{3}) \rightarrow \cdots.
\end{array}$\hfil
\par\vskip.5ex\noindent
So we can define connecting homomorphism $\diff{}_{*} : H_{{\Cubical}_{c}}^{p}(X) \overset{\res^{*}}\cong H_{{\Cubical}_{c}}^{p}(\mathcal{U}_{3}) \xrightarrow{\overline{\diff{}_{*}}} H^{p+1}_{c}(\mathcal{U}_{0};G'_{0}) \to H_{{\Cubical}_{c}}^{p+1}(U_{0})$ where the latter map is induced from the natural inclusion $\Omega^{p+1}_{{\Cubical}_{c}}(\mathcal{U}_{0};G'_{0}) \subset \Omega_{{\Cubical}_{c}}^{p+1}(\mathcal{U}_{0}) = \Omega_{{\Cubical}_{c}}^{p+1}(U_{0})$, which fits in with the following commutative ladder.
\par\vskip.5ex\noindent\hfil
$\divide\dgARROWLENGTH by 2
\begin{diagram}
\node{H^{p}_{\Cubical_{c}}(\mathcal{U}_{0};G'_{0})}
\arrow{s}
\arrow{r,t}{\overline{\phi_{*}}}
\node{H^{p}_{\Cubical_{c}}(\mathcal{U}_{1};G'_{1}){\oplus}H^{p}_{\Cubical_{c}}(\mathcal{U}_{2};G'_{2})} 
\arrow{s}
\arrow{r,t}{\overline{\psi_{*}}} 
\node{H_{{\Cubical}_{c}}^{p}(\mathcal{U}_{3})}
\arrow{s,=}
\arrow{r,t}{\overline{\diff{}_{*}}}
\node{H^{p+1}_{\Cubical_{c}}(\mathcal{U}_{0};G'_{0})}
\arrow{s}
\\
\node{H^{p}_{{\Cubical}_{c}}(\mathcal{U}_{0})}
\node{H^{p}_{{\Cubical}_{c}}(\mathcal{U}_{1}){\oplus}H^{p}_{{\Cubical}_{c}}(\mathcal{U}_{2})} 
\node{H_{{\Cubical}_{c}}^{p}(\mathcal{U}_{3})}
\node{H^{p+1}_{{\Cubical}_{c}}(\mathcal{U}_{0})}
\\
\node{H_{{\Cubical}_{c}}^{p}(U_{0})}
\arrow{n,=}
\arrow{r,t}{\phi_{*}}
\node{H_{{\Cubical}_{c}}^{p}(U_{1}){\oplus}H_{{\Cubical}_{c}}^{p}(U_{2})} 
\arrow{n,lr}{\cong}{\res^{*}{\oplus}\res^{*}}
\arrow{r,t}{\psi_{*}} 
\node{H_{{\Cubical}_{c}}^{p}(X)}
\arrow{n,lr}{\cong}{\res^{*}}
\arrow{r,t}{\diff{}_{*}} 
\node{H_{{\Cubical}_{c}}^{p+1}(U_{0})}
\arrow{n,=}
\end{diagram}$\hfil
\par\vskip.5ex\noindent
Using these diagrams, we show the desired exactness as follows.
\begin{description}
\item[(exactness at $H_{{\Cubical}_{c}}^{p}(U_{0})$)]
Assume $\phi_{*}([\omega])=0$.
Let $G'_{t}=G_{t} \cup K_{\omega}$, $t=0, 1, 2$.
Then $[\omega] \in H^{p}_{\Cubical_{c}}(\mathcal{U}_{0};G'_{0})$ satisfying $\overline{\phi}_{*}([\omega])$ is zero in $H^{p}_{{\Cubical}_{c}}(\mathcal{U}_{1}){\oplus}H^{p}_{{\Cubical}_{c}}(\mathcal{U}_{2})$.
Hence there is $\sigma^{(1)}{\oplus}\sigma^{(2)} \in \Omega^{p}_{{\Cubical}_{c}}(\mathcal{U}_{1}){\oplus}\Omega^{p}_{{\Cubical}_{c}}(\mathcal{U}_{2})$ such that $\diff{\sigma^{(1)}}{\oplus}\diff{\sigma^{(2)}}=\phi_{\natural}(\omega)$.
Then we may expand $G'_{t}$ as $G'_{t}=G_{t} \cup K_{\omega} \cup K_{\sigma^{(t)}}$, $t=1, 2$ and $G'_{0}=G'_{1} \cap G'_{2}$, so that we obtain $\overline{\phi}_{*}([\omega])=0$, and hence $[\omega] \in \Im{\overline{\diff{}}_{*}}$ in $H^{p+1}_{\Cubical_{c}}(\mathcal{U}_{0};G'_{0})$.
\vspace{1ex}
Thus $[\omega]$ is in the image of $\diff{}_{*}$.
\item[(exactness at $H_{{\Cubical}_{c}}^{p}(U_{1}) \oplus H_{{\Cubical}_{c}}^{p}(U_{2})$)]
Assume $\psi_{*}([\eta^{(1)}]{\oplus}[\eta^{(2)}])=0$.
Let $G'_{t} = G_{t} \cup K_{\eta^{(t)}}$, $t=1, 2$ and $G'_{0}=G'_{1} \cap G'_{2}$, so that $[\eta^{(1)}]{\oplus}[\eta^{(2)}] \in H^{p}_{\Cubical_{c}}(\mathcal{U}_{1};G'_{1}){\oplus}H^{p}_{\Cubical_{c}}(\mathcal{U}_{2};G'_{2})$ and $\overline{\psi}_{*}([\eta^{(1)}]{\oplus}[\eta^{(2)}])=0$ in $H_{{\Cubical}_{c}}^{p}(\mathcal{U}_{3}) \cong H_{{\Cubical}_{c}}^{p}(X)$.
Then we obtain $[\eta^{(1)}]{\oplus}[\eta^{(2)}] \in \Im{\overline{\phi}_{*}}$ in $H_{\Cubical_{c}}^{p}(\mathcal{U}_{1};G'_{1}) \oplus H_{\Cubical_{c}}^{p}(\mathcal{U}_{2};G'_{2})$, and hence $[\eta^{(1)}]{\oplus}[\eta^{(2)}]$ is in the image of $\phi_{*}$.
\vspace{1ex}
\item[(exactness at $H_{{\Cubical}_{c}}^{p}(X)$)]
Assume $\diff{}_{*}([\kappa]) = 0$. 
Then there is $\sigma \in \Omega^{p}_{{\Cubical}_{c}}(U_{0})$ such that $\diff{}_{\natural}(\kappa) = \diff{\sigma}$ in $\Omega^{p+1}_{{\Cubical}_{c}}(U_{0})$.
Let $G'_{t}=G_{t} \cup K_{\sigma}$, $t=0, 1, 2$.
Then we may assume $\sigma \in \Omega^{p}_{\Cubical_{c}}(\mathcal{U}_{0};G'_{0})$ satisfying $\diff{}_{\natural}(\kappa)=\diff{\sigma}$ in $\Omega^{p+1}_{\Cubical_{c}}(\mathcal{U}_{0};G'_{0})$, and hence $[\kappa] \in \Im{\overline{\psi}_{*}}$ in $H_{{\Cubical}_{c}}^{p}(\mathcal{U}_{3})$.
Thus $[\kappa]$ is in the image of $\psi_{*}$.
\end{description}
The other directions are clear by definition, and it completes the proof of the theorem.
\end{proof}

Let $\topological$ be the category of topological spaces and continuous maps.
Then there are natural embeddings $\topological \hookrightarrow \differentiable$ and $\topological \hookrightarrow \diffeological$. 

Let $X=(X,\{X^{(n)}\,;\,n\!\geq\!-1\})$ be a topological CW complex embedded in the category $\diffeological$ or $\differentiable$ with the set of $n$-balls $\{B^{n}_{j}\}$ indexed by $j \!\in\!J_{n}$. 
Then we have open sets $U=X^{(n)} \smallsetminus X^{(n-1)}$ and $V=X^{(n)}\smallsetminus (\underset{\scriptsize j \in J_{n}}{\cup}\{\mathbold{0}_{j}\})$ in $X^{(n)}$, where $\mathbold{0}_{j} \in B^{n}_{j}$ denotes the element corresponding to $\mathbold{0} \in B^{n}=\{\mathbold{x} \in \real^{n}\,;\,\Vert\mathbold{x}\Vert \leq 1\}$ the origin of $\real^{n}$.

A ball $B^{n}_{j} = B^{n}$ (if we disregard the indexing) has a nice open covering given by $\{\Int{B^{n}_{j}},B^{n}_{j}{\smallsetminus}\{\mathbold{0}\}\}$ with a partition of unity $\{\rho_{1}^{(j)},\rho_{2}^{(j)}\}$ as follows: $\rho_{1}^{(j)}=1{-}\rho_{2}^{(j)}$ and $\rho_{2}^{(j)}(\mathbold{x})=\lambda(\Vert{\mathbold{x}}\Vert)$ for small $a\!>\!0$.
Thus $\mathcal{U} = \{U,V\}$ is a nice open covering of $X^{(n)}$ with a normal partition of unity $\{\rho^{U},\rho^{V}\}$ in which $\rho^{U}$ is a zero-extension of $\rho_{1}^{(j)}$'s on the union of balls and $\rho^{V}=1{-}\rho^{U}$. 
Then $U$ is smoothly homotopy equivalent to discrete points each of which is $\mathbold{0}_{j} \in B^{n}_{j}$ for some $j \!\in\! J_{n}$ and $V$ is smoothly homotopy equivalent to $X^{(n-1)}$.
By comparing Mayer-Vietoris sequences associated to $\mathcal{U}$ in Theorem \ref{thm:nice-open-covering-1} with that in Theorem \ref{thm:cubical-open-covering} for $X = X^{(n)}$, we obtain the following result using Remark \ref{rem:manifold-C} together with so-called five lemma, by using standard homological methods inductively on $n$.

\begin{thm}\label{thm:deRham-top}
For a CW complex $X$, there are natural isomorphisms
\par\vskip.5ex\noindent\hfil$\begin{array}{l}
H_{\mathcal{D}}^{q}(X) \cong 
H_{\mathcal{C}}^{q}(X) \cong H_{\Cubical}^{q}(X) \cong H^{q}(X,\real) \cong \Hom(H_{q}(X),\real),
\end{array}$
\hfil\par\vskip0ex\noindent
for any $q \geq 0$, and hence we have $H_{\mathcal{D}}^{1}(X) \cong H_{\mathcal{C}}^{1}(X) \cong H_{\Cubical}^{1}(X) \overset{\rho}\cong \Hom(\pi_{1}(X),\real)$.
\end{thm}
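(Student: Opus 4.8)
The plan is to prove the chain of isomorphisms by an Eilenberg--Steenrod style comparison, running an induction over the skeleta of $X$ and feeding the Mayer--Vietoris sequences already established into the five lemma, and then to extract the $q=1$ statement from the injectivity of the Hurewicz map. First I would dispose of the diffeological case once and for all: for the topological embedding $X \hookrightarrow \diffeological$ and its Chen--ification $ch(X) \hookrightarrow \differentiable$, the gluing axiom (D2) shows that a natural transformation $\mathcal{D}^{X} \to \wedge^{p}$ is determined by, and can be freely reconstructed from, its restriction to convex domains, so $\Omega^{p}_{\mathcal{D}}(X) \cong \Omega^{p}_{\mathcal{C}}(ch(X))$ compatibly with $\diff$, whence $H^{q}_{\mathcal{D}}(X) \cong H^{q}_{\mathcal{C}}(ch(X)) = H^{q}_{\mathcal{C}}(X)$. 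From here I would work with the three theories $H^{*}_{\mathcal{C}}$, $H^{*}_{\Cubical}$ and the smooth {\cubicaltxt} singular theory $H^{*}(-,\real)$, recording that each of them: is additive on disjoint unions; is homotopy invariant (the cited Chen/Souriau theorem, Theorem \ref{thm:homotopy-invariance}, and the homotopy axiom, all applicable here since on a CW complex a differentiable map is just a continuous one and a differentiable homotopy an ordinary one); takes the value $\real$ in degree $0$ and $0$ elsewhere on a point (Examples \ref{expl:one-point-set-C} and \ref{expl:one-point-set-cube}, and the dimension axiom); and admits a Mayer--Vietoris sequence (Theorem \ref{thm:nice-open-covering-1}, Theorem \ref{thm:cubical-open-covering}, and the stated one).

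Next I would set up the comparison maps. The natural transformation $\Cube^{*} : \Omega^{*}_{\mathcal{C}}(Y) \to \Omega^{*}_{\Cubical}(Y)$ commutes with $\diff$, hence induces $\Cube^{*} : H^{*}_{\mathcal{C}}(Y) \to H^{*}_{\Cubical}(Y)$. The de Rham map $\mathcal{I} : \Omega^{p}_{\Cubical}(Y) \to \Gamma^{p}(Y,\real)$, $(\mathcal{I}\omega)(P) = \int_{\simp^{p}}\omega_{\underline{p}}(P)$ for $P \in \Sigma_{p}(Y)$, is a cochain map by Stokes' theorem, and it vanishes on degenerate $p$-cubes, since for $P = Q{\circ}\varepsilon_{i}$ one has $\omega_{\underline{p}}(P) = \varepsilon_{i}^{*}\,\omega_{\underline{p-1}}(Q) = 0$, $\omega_{\underline{p-1}}(Q)$ being a $p$-form on the $(p{-}1)$-cube; so $\mathcal{I}$ descends to the normalized complex and induces a natural $\mathcal{I} : H^{*}_{\Cubical}(Y) \to H^{*}(Y,\real)$. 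Because all the Mayer--Vietoris connecting homomorphisms in sight come from short exact sequences of cochain complexes --- for Theorem \ref{thm:cubical-open-covering} after the identification $H^{*}_{\Cubical}(\mathcal{U}) \cong H^{*}_{\Cubical}(X)$ of the excision Theorem \ref{thm:excision} --- the maps $\Cube^{*}$ and $\mathcal{I}$ are maps of those short exact sequences and so yield commutative ladders between the long exact sequences.

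Then I would induct on $\dim X$. For $\dim X \le 0$, $X$ is discrete, all three theories are $\prod_{\pi_{0}(X)}\real$ in degree $0$, and $\Cube^{*}$, $\mathcal{I}$ are visibly isomorphisms. For the step, take the nice covering $\mathcal{U} = \{U,V\}$ of $X^{(n)}$ constructed in the paragraph preceding the theorem, with $U$ homotopy equivalent to the discrete set of cell centres, $V$ to $X^{(n-1)}$, and $U \cap V$ to a disjoint union of copies of $S^{n-1}$ --- all homotopy equivalent to CW complexes of dimension $<n$, so by the inductive hypothesis, additivity and homotopy invariance $\Cube^{*}$ and $\mathcal{I}$ are isomorphisms there. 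The five lemma applied to the commutative ladders then forces them to be isomorphisms on $X^{(n)}$, completing the induction; for infinite-dimensional $X$ one passes to the inverse limit over skeleta (each complex on $X$ being $\varprojlim_{n}$ of its restrictions, since a plot from a convex domain has locally finite-subcomplex image, the towers being Mittag--Leffler in each degree so $\varprojlim^{1}$ vanishes). This gives $H^{q}_{\mathcal{C}}(X) \cong H^{q}_{\Cubical}(X) \cong H^{q}(X,\real)$, and the identification $H^{q}(X,\real) \cong \Hom(H_{q}(X),\real)$ is the universal coefficient theorem over $\real$ (which is a divisible, hence injective, $\integral$-module, so $\Hom(-,\real)$ is exact) together with the coincidence on CW complexes of smooth {\cubicaltxt} singular homology with ordinary singular homology. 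For the refinement at $q=1$: Theorem \ref{thm:hurewicz-mono} gives that $\rho_{*} : H^{1}_{\Cubical}(X) \to \Hom(\underline{\pi}_{1}(X),\real)$ is a monomorphism; a direct computation identifies the target --- the cokernel of $\proj^{*}$ --- with $\Hom(H_{1}(X),\real)$ (a functor $\underline{\pi}_{1}(X) \to \underline{\real}(X)$ modulo those factoring through $\underline{X}$ is an additive map of the fundamental groupoid to $\real$ modulo coboundaries, i.e.\ a first groupoid cohomology class, which on each path component is $\Hom(\pi_{1}^{\mathrm{ab}},\real)$, and $\Hom(\pi_{1}(X),\real) = \Hom(H_{1}(X),\real)$ as $\real$ is abelian); and under this identification together with the de Rham identification $H^{1}_{\Cubical}(X) \cong \Hom(H_{1}(X),\real)$ just obtained, $\rho_{*}$ becomes the identity, because $\rho(\omega)([\ell]) = \int_{I}\omega_{\underline{1}}(\ell)$ is precisely the de Rham pairing of $[\omega]$ with the class of $\ell$. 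Hence the monomorphism $\rho_{*}$ is an isomorphism.

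The step I expect to be the main obstacle is the bookkeeping that makes the five-lemma comparison legitimate: verifying that $\Cube^{*}$ and $\mathcal{I}$ genuinely commute with the connecting homomorphisms of three Mayer--Vietoris sequences built in rather different ways (one directly, one via the excision Theorem \ref{thm:excision}, one axiomatically), and handling the inverse-limit passage for infinite-dimensional $X$ cleanly; the identification of $\rho_{*}$ with the de Rham pairing at $q=1$ is a secondary point requiring care but no new ideas.
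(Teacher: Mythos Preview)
Your proposal is correct and follows essentially the same route as the paper: induct over the skeleta using the nice covering $\mathcal{U}=\{U,V\}$ of $X^{(n)}$ described just before the theorem, compare the Mayer--Vietoris sequences of Theorem~\ref{thm:nice-open-covering-1} and Theorem~\ref{thm:cubical-open-covering} (and the singular one) via the five lemma, and invoke homotopy invariance and the dimension computations to anchor the induction. The paper's own argument is only a one-sentence sketch (``standard homological methods inductively on $n$'' together with Remark~\ref{rem:manifold-C}), so you have in fact supplied considerably more detail than the paper does---notably the explicit construction of the integration map $\mathcal{I}$, the passage to infinite-dimensional $X$ by an inverse-limit/Mittag--Leffler argument, and the identification of $\rho_{*}$ with the de Rham pairing at $q=1$; the paper leaves all of these implicit.
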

\begin{conj}\label{cor:deRham-compact-support-top}
For a CW complex $X$, there are natural isomorphisms
\par\vskip.5ex\noindent\hfil
$\begin{array}{l}
H_{\mathcal{D}_{c}}^{q}(X) \cong 
H_{\mathcal{C}_{c}}^{q}(X) \cong H_{\Cubical_{c}}^{q}(X),
\end{array}$ for any $q \geq 0$.
\end{conj}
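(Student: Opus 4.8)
Toward Conjecture~\ref{cor:deRham-compact-support-top}, the plan is to imitate the proof of Theorem~\ref{thm:deRham-top} as closely as the weaker functorial properties of compact supports allow. As there, the identification $H_{\mathcal{D}_c}^{\ast}\cong H_{\mathcal{C}_c}^{\ast}$ should come from the embedding $ch\colon\diffeological\hookrightarrow\differentiable$, once one checks that $ch$ is compatible with compact supports (it preserves properness, and topological CW complexes embed compatibly into both categories); so the real content is to prove that the compact-support analogue of $\Cube^{\ast}$, namely $\Cube^{\ast}\colon\Omega_{\mathcal{C}_c}^{p}(X)\to\Omega_{{\Cubical}_{c}}^{p}(X)$, induces an isomorphism $H_{\mathcal{C}_c}^{\ast}(X)\xrightarrow{\ \cong\ }H_{{\Cubical}_{c}}^{\ast}(X)$ for every CW complex $X$. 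This map is defined, and monic, exactly as in the ordinary case. The machinery available is the compact-support Mayer--Vietoris sequence for nice coverings with a normal partition of unity (Theorems~\ref{thm:nice-open-covering-2} and \ref{thm:cubical-open-covering2}; CW complexes are normal, so Theorem~\ref{thm:partition_of_unity} supplies the partition), the cubical excision isomorphism (Theorem~\ref{thm:excision}), and the already-proven ordinary case (Theorem~\ref{thm:deRham-top}).

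First I would establish the atomic case: $\Cube^{\ast}$ induces an isomorphism on every smooth manifold of finite type. By iterated use of the compact-support Mayer--Vietoris sequence and Theorem~\ref{thm:excision} this reduces to $M=\real^{n}$, where $H_{\mathcal{C}_c}^{\ast}(\real^{n})\cong H_{dR_{c}}^{\ast}(\real^{n})$ by Remark~\ref{rem:manifold-C}, while $H_{{\Cubical}_{c}}^{\ast}(\real^{n})$ is computed by a compact-support Poincar\'e lemma --- an explicit fibre-integration homotopy operator for the factorisation $\real^{n}=\real^{n-1}\times\real$, in the spirit of Lemmas~\ref{lem:homotopy-invariance} and \ref{lem:de-rham-stability}. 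I would also record the routine fact that both compact-support theories send disjoint unions to direct sums. Next I would attempt an induction on $n$ showing that $\Cube^{\ast}$ is an isomorphism on the skeleton $X^{(n)}$, using for the inductive step the nice open covering $\mathcal{U}=\{U,V\}$ of $X^{(n)}$ introduced in establishing Theorem~\ref{thm:deRham-top}, with $U=X^{(n)}\smallsetminus X^{(n-1)}\cong\coprod_{j}\real^{n}$ and $V=X^{(n)}\smallsetminus(\underset{j}{\cup}\{\mathbold{0}_{j}\})$, and applying the five lemma to the two compact-support Mayer--Vietoris sequences. The terms $U$ and $U\cap V=\coprod_{j}(\real^{n}\smallsetminus\{0\})$ are then handled by the atomic case and additivity.

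The hard part will be the term $V$. Since compact-support cohomology is \emph{not} homotopy invariant, one cannot replace $H_{c}^{\ast}(V)$ by $H_{c}^{\ast}(X^{(n-1)})$ the way Theorem~\ref{thm:deRham-top} replaces $H^{\ast}(V)$ by $H^{\ast}(X^{(n-1)})$: already for $X=S^{1}$ one has $V\cong\real$, so $H_{c}^{\ast}(V)\cong H_{c}^{\ast}(\real)\ne H_{c}^{\ast}(X^{(0)})$. Structurally $V$ is the open mapping cylinder of the attaching maps $\coprod_{j}S^{n-1}_{j}\to X^{(n-1)}$, so $X^{(n-1)}$ sits in $V$ as a closed subspace with open complement $\coprod_{j}(\real^{n}\smallsetminus\{0\})$, and what the induction really wants is, for both theories and compatibly, the long exact sequence of a closed CW pair
\[
\cdots\to H_{c}^{q}(X\smallsetminus A)\to H_{c}^{q}(X)\to H_{c}^{q}(A)\to H_{c}^{q+1}(X\smallsetminus A)\to\cdots
\]
for $A$ a closed subcomplex of $X$. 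Granting this sequence, one applies it with $A=X^{(n-1)}$, $X=X^{(n)}$ and $X\smallsetminus A=\coprod_{j}\real^{n}$, and the five lemma closes the induction with $V$ never entering.

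Producing this compact-support long exact sequence inside the present framework --- where no tubular-neighbourhood theorem is available --- is the essential difficulty, and is why the statement is recorded here only as a conjecture. Two routes seem worth pursuing. One is to realise the sequence as an iterated use of compact-support Mayer--Vietoris after exhausting $V$ by \emph{closed} mapping-cylinder neighbourhoods of $X^{(n-1)}$: over a compact collar the radial retraction onto $X^{(n-1)}$ is genuinely proper, hence does induce an isomorphism on compact-support cohomology, and one controls the contribution of the remaining open tails $\coprod_{j}(\real^{n}\smallsetminus\{0\})\times(0,\varepsilon)$. The other is to pass to the filtered colimit over finite subcomplexes $X_{\alpha}\subset X$, where $X_{\alpha}$ is compact, so $H_{c}^{\ast}(X_{\alpha})=H^{\ast}(X_{\alpha})$ and Theorem~\ref{thm:deRham-top} applies, and then to check that $\Omega_{\mathcal{C}_c}^{\ast}$, $\Omega_{{\Cubical}_{c}}^{\ast}$ and $\Cube^{\ast}$ all commute with the colimit of ``forms supported near $X_{\alpha}$'' --- the subtlety being that such a form lives on an open thickening of $X_{\alpha}$ rather than on $X_{\alpha}$ itself, so this route meets the same open-collar phenomenon and leans again on the manifold case of the first step. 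Either way, genuinely new input beyond the homotopy-theoretic arguments of the non-compact case is required.
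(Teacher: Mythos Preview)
The paper does not prove this statement: it is explicitly labelled a \emph{Conjecture} (environment \texttt{conj}), and no argument is given. The only commentary the paper offers is the sentence immediately following, to the effect that one does not know how to compute $H_{\mathcal{D}_{c}}^{\ast}(X)$ or $H_{\mathcal{C}_{c}}^{\ast}(X)$ without an appropriate nice open covering carrying a normal partition of unity. So there is no ``paper's own proof'' to compare against.

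You have correctly recognised this, and your proposal is not a proof but a diagnosis of where the analogue of the argument for Theorem~\ref{thm:deRham-top} breaks down. That diagnosis is accurate and in fact more precise than anything the paper says: the failure of homotopy invariance for compact supports means the step ``$V$ is homotopy equivalent to $X^{(n-1)}$, so replace $H^{\ast}(V)$ by $H^{\ast}(X^{(n-1)})$'' has no direct compact-support counterpart, and the inductive scheme stalls exactly there. Your two suggested workarounds (a long exact sequence of a closed pair via proper collar retractions, or a colimit over finite subcomplexes) are reasonable lines of attack, but as you note, neither is available off the shelf in the paper's framework. In short: your assessment that genuine new input is required, and that this is why the statement remains a conjecture, matches the paper's own stance.
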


It would be possible to determine $H_{\Cubical}^{\ast}(X)$ and $H_{\Cubical_{c}}^{\ast}(X)$ by using standard methods in algebraic topology even if $X$ is not a topological CW complex, while we do not know how to determine $H_{\mathcal{D}}^{\ast}(X)$, $H_{\mathcal{C}}^{\ast}(X)$, $H_{\mathcal{D}_{c}}^{\ast}(X)$ nor $H_{\mathcal{C}_{c}}^{\ast}(X)$, if we do not find out any appropriate nice open covering (with a normal partition of unity) on $X$.\vspace{-1ex}

\appendix

\section{Smooth CW complex}

A smooth CW complex $X=(X,\{X^{(n)}\,;\,n\!\geq\!-1\})$ is a differentiable or diffeological space built up from $X^{(-1)} = \emptyset$ by inductively attaching $n$-balls $\{B^{n}_{j}\,;\,j \!\in\! J_{n}\}$ by $C^{\infty}$ maps from their boundary spheres $\{S^{n-1}_{j}\,;\,j \!\in\! J_{n}\}$ to $n{-}1$-skeleton $X^{(n-1)}$ to obtain $n$-skeleton $X^{(n)}$, $n \!\geq\! 0$, where the differentiable structures of balls and boundary spheres are given by their manifold structures.
Thus a plot in $X^{(n)}$ is a map $P : A \to X$ with an open covering $\{A_{\alpha}\,;\,\alpha\!\in\!\Lambda\}$ of $A$ such that $P(A_{\alpha})$ is in $X^{(n-1)}$ or $B^{n}_{j}$ for some $j \!\in\! J_{n}$ and $P\vert_{A_{\alpha}}$ is a plot of $X^{(n-1)}$ or $B^{n}_{j}$, respectively.
Then $X$ is the colimit of $\{X^{(n)}\}$ in $\differentiable$ or $\diffeological$.

Let $X=(X,\{X^{(n)}\})$ be a smooth CW complex in either $\differentiable$ or $\diffeological$ with the set of $n$-balls $\{B^{n}_{j}\,;\,j \!\in\!J_{n}\}$.
Then for any plot $P : A \to X^{(n)}$, there is an open covering $\{A_{\alpha}\}$ of $A$, such that $P(A_{\alpha})$ is in either $X^{(n-1)}$ or $B^{n}_{j}$ for some $j \!\in\! J_{n}$ and $P_{\alpha}=P\vert_{A_{\alpha}}$ is a plot of $X^{(n-1)}$ or $B^{n}_{j}$,\vspace{.5ex} respectively.
Let $U=X^{(n)} \smallsetminus X^{(n-1)}$ and $V=X^{(n)}\smallsetminus (\underset{\scriptsize j \in J_{n}}{\cup}\{\mathbold{0}_{j}\})$, where $\mathbold{0}_{j} \in B^{n}_{j}$ denotes the element corresponding to $\mathbold{0} \in B^{n}$.
\begin{description}
\item[\quad Case ($\Im{P_{\alpha}} \subset X^{(n-1)}$)]
$P_{\alpha}^{-1}(U) = \emptyset$,\vspace{.5ex} and $P_{\alpha}^{-1}(V) = A_{\alpha}$.
\vskip.5ex
\item[\quad Case ($\Im{P_{\alpha}} \subset B^{n}_{j}$)]
$P_{\alpha}^{-1}(U) = P_{\alpha}^{-1}(\Int{B^{n}_{j}})$, and $P_{\alpha}^{-1}(V) = P_{\alpha}^{-1}(B^{n}_{j} \smallsetminus \{\mathbold{0}_{j}\})$.
\end{description}
\vspace{.5ex}
In each case, $P_{\alpha}^{-1}(U)$ and $P_{\alpha}^{-1}(V)$ are open in $A_{\alpha}$ and hence in $A$, which implies that $P^{-1}(U)$ and $P^{-1}(V)$ are open in $A$ for any plot $P$.
Thus $U$ and $V$ are open sets in $X^{(n)}$.

Similarly to the case when $X$ is a topological CW complex, 
$\mathcal{U} = \{U,V\}$ is a nice open covering of $X^{(n)}$ with a normal partition of unity $\{\rho^{U},\rho^{V}\}$, since $\lambda$ is a smooth function. 
Then, similar arguments for a topological CW complex lead us to the following result. 

\begin{thm}\label{thm:deRham-smooth}
For a smooth CW complex $X$, there are natural isomorphisms
\par\vskip.5ex\noindent\hfil$\begin{array}{l}
H_{\mathcal{D}}^{q}(X) \cong H_{\mathcal{C}}^{q}(X) \cong H_{\Cubical}^{q}(X) \cong H^{q}(X,\real) \cong \Hom(H_{q}(X),\real),
\end{array}$\hfil\par\vskip0ex\noindent
for any $q \geq 0$, and hence we have $H_{\mathcal{D}}^{1}(X) \cong H_{\mathcal{C}}^{1}(X) \cong H_{\Cubical}^{1}(X) \overset{\rho}\cong \Hom(\pi_{1}(X),\real)$.
\end{thm}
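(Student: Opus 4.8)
The plan is to run the same skeletal induction that proves Theorem~\ref{thm:deRham-top}, the extra input needed in the smooth setting having already been supplied in the paragraphs above: for a smooth CW complex $X$ the pair $\mathcal{U}=\{U,V\}$, with $U=X^{(n)}\smallsetminus X^{(n-1)}$ and $V=X^{(n)}\smallsetminus(\cup_{j}\{\mathbold{0}_{j}\})$, is a \emph{nice} open covering of each skeleton $X^{(n)}$ carrying a normal partition of unity $\{\rho^{U},\rho^{V}\}$. I would argue by induction on $\dim X$. When $\dim X=0$ the space $X$ is discrete, and all five functors, together with the comparison transformations between $H_{\mathcal{D}}^{\ast}$, $H_{\mathcal{C}}^{\ast}$, $H_{\Cubical}^{\ast}$, $H^{\ast}(-,\real)$ and $\Hom(H_{\ast}(-),\real)$, are computed by Examples~\ref{expl:one-point-set-C} and~\ref{expl:one-point-set-cube} together with additivity over coproducts, which yields the asserted isomorphisms.

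For the inductive step let $X=X^{(n)}$ with $n\ge1$. Radial deformations give three smooth homotopy equivalences: $U$ is smoothly homotopy equivalent to the discrete set $\{\mathbold{0}_{j}\}_{j\in J_{n}}$; $V$ is smoothly homotopy equivalent to $X^{(n-1)}$; and $U\cap V=\coprod_{j}(\Int B^{n}_{j}\smallsetminus\{\mathbold{0}_{j}\})$ is smoothly homotopy equivalent to $\coprod_{j}S^{n-1}_{j}$. By homotopy invariance — the cited theorem of Chen--Souriau for $H_{\mathcal{E}}^{\ast}$, Theorem~\ref{thm:homotopy-invariance} for $H_{\Cubical}^{\ast}$, and the homotopy axiom for smooth cubical singular cohomology — together with additivity over coproducts, the four cohomologies of $U$, $V$ and $U\cap V$ reduce respectively to those of a point, of $X^{(n-1)}$, and of a single sphere $S^{n-1}$. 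Now $X^{(n-1)}$ and $S^{n-1}$ (say with cell structure $e^{0}\cup e^{n-1}$) are smooth CW complexes of dimension $\le n-1$, so the inductive hypothesis supplies the isomorphisms and all comparison maps on each of them; alternatively, for the closed manifold $S^{n-1}$ one may invoke Remark~\ref{rem:manifold-C} and the classical de~Rham theorem. I would then feed these identifications into the four Mayer--Vietoris sequences attached to $\mathcal{U}$ — Theorem~\ref{thm:nice-open-covering-1} for $H_{\mathcal{C}}^{\ast}$ and $H_{\mathcal{D}}^{\ast}$, Theorem~\ref{thm:cubical-open-covering} for $H_{\Cubical}^{\ast}$, and the Mayer--Vietoris sequence of smooth cubical singular cohomology — and assemble them into a single commutative ladder via the natural comparison transformations (the monomorphism $\Cube^{\ast}$, the map induced by the embedding $ch$, and the de~Rham comparison map $H_{\Cubical}^{\ast}(X)\to H^{\ast}(X,\real)$ given by integrating cubical forms over plots, a cochain map by Stokes' theorem and the higher-dimensional analogue of the Hurewicz homomorphism $\rho$ of Definition~\ref{defn:hurewicz_homomorphism}). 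The five lemma then upgrades the isomorphisms on $U$, $V$, $U\cap V$ to isomorphisms on $X^{(n)}$, completing the induction for finite-dimensional $X$.

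For a general smooth CW complex $X$, written as the colimit of its skeleta, I would use that the image of any plot $\simp^{m}\to X$ is compact and hence meets only finitely many cells, so every plot factors through some $X^{(n)}$; consequently each of $\Omega_{\mathcal{C}}^{\ast}(X)$, $\Omega_{\mathcal{D}}^{\ast}(X)$, $\Omega_{\Cubical}^{\ast}(X)$ is the inverse limit over $n$ of the corresponding complexes on $X^{(n)}$, with surjective transition maps (a form on $X^{(n)}$ extends over the cells of $X^{(n+1)}$ using collars). A standard $\lim^{1}$ argument then applies: since the transition maps $H^{q}(X^{(n+1)})\to H^{q}(X^{(n)})$ are isomorphisms once $n\ge q+1$ — and the same stabilisation holds for $H^{q}(-,\real)$ — the $\lim^{1}$-term vanishes and $H^{q}(X)\cong H^{q}(X^{(q+1)})$, reducing everything to the finite-dimensional case. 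The universal-coefficient isomorphism $H^{q}(X,\real)\cong\Hom(H_{q}(X),\real)$ is automatic because $\real$ is a field, and the final clause $H_{\Cubical}^{1}(X)\overset{\rho}\cong\Hom(\pi_{1}(X),\real)$ follows by combining the isomorphism $H_{\Cubical}^{1}(X)\cong\Hom(H_{1}(X),\real)$ just obtained with the Hurewicz isomorphism $H_{1}(X)\cong\pi_{1}(X)^{\mathrm{ab}}$ and the injectivity of $\rho_{\ast}$ from Theorem~\ref{thm:hurewicz-mono}.

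The main obstacle I expect is not geometric but the naturality bookkeeping for the ladder: one must verify that the comparison transformations are natural for the open inclusions $i_{t},j_{t}$ \emph{and} commute with the Mayer--Vietoris connecting homomorphisms — in particular that the square containing the coboundary $\delta$ commutes — which hinges on the partition of unity $\{\rho^{U},\rho^{V}\}$ used to split the de~Rham Mayer--Vietoris sequences of Theorem~\ref{thm:nice-open-covering-1} being compatible with the subdivision/excision machinery (Theorems~\ref{thm:excision} and~\ref{thm:cubical-open-covering}) that produces the cubical one. The colimit step is the remaining technical point, but it becomes routine once the factorisation of plots through finite skeleta is established.
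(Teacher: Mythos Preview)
Your proposal is correct and follows essentially the same approach as the paper: the paper's proof of Theorem~\ref{thm:deRham-smooth} is the single sentence ``similar arguments for a topological CW complex lead us to the following result,'' referring back to the skeletal induction sketched before Theorem~\ref{thm:deRham-top}, which compares the Mayer--Vietoris sequences of Theorems~\ref{thm:nice-open-covering-1} and~\ref{thm:cubical-open-covering} for the nice covering $\{U,V\}$ and invokes Remark~\ref{rem:manifold-C} and the five lemma. You have simply made explicit the details the paper leaves to ``standard homological methods'' --- the base case, the homotopy equivalences on $U$, $V$, $U\cap V$, the passage to the colimit for infinite-dimensional $X$, and the naturality of the connecting maps --- all of which the paper omits but which your outline handles correctly.
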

\begin{conj}\label{conj:deRham-compact-support-smooth}
For a smooth CW complex $X$, there are natural isomorphisms
\par\vskip.5ex\noindent\hfil$\begin{array}{l}
H_{\mathcal{D}_{c}}^{q}(X) \cong 
H_{\mathcal{C}_{c}}^{q}(X) \cong H_{\Cubical_{c}}^{q}(X),
\end{array}$
for any \ $q \geq 0$.
\end{conj}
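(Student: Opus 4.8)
The plan is to transport the skeletal induction that proves Theorem~\ref{thm:deRham-smooth} (and, in the topological case, Theorem~\ref{thm:deRham-top}) to the compact-support setting, with the two Mayer--Vietoris sequences of Theorems~\ref{thm:nice-open-covering-1} and~\ref{thm:cubical-open-covering} replaced by their compact-support analogues, Theorems~\ref{thm:nice-open-covering-2} and~\ref{thm:cubical-open-covering2}, and with the excision isomorphism (Theorem~\ref{thm:excision}) and the zero-extension homomorphism (Proposition~\ref{prop:zero-extension}) playing the same roles as before. A CW complex is weakly-separated, and the appendix already supplies, for each skeleton $X^{(n)}$, the nice open covering $\mathcal{U}=\{U,V\}$ with $U=X^{(n)}\smallsetminus X^{(n-1)}=\coprod_{j}\Int{B^{n}_{j}}$ and $V=X^{(n)}\smallsetminus(\cup_{j}\{\mathbold{0}_{j}\})$, together with a normal partition of unity $\{\rho^{U},\rho^{V}\}$; so Theorems~\ref{thm:nice-open-covering-2} and~\ref{thm:cubical-open-covering2} apply and the natural comparison map $\Cube^{\ast}$ links the two resulting long exact sequences into a commutative ladder. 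One then climbs the skeleta inductively, applies the five lemma at each stage, and finally passes to a colimit, using that every compact subset of a CW complex lies in a finite subcomplex.

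For the five-lemma step at level $n$ one needs $\Cube^{\ast}$ to be an isomorphism on $U$, on $U\cap V=\coprod_{j}(\Int{B^{n}_{j}}\smallsetminus\{\mathbold{0}_{j}\})$ and on $V$. The first two are harmless: they are disjoint unions of open subsets of smooth manifolds, so Remark~\ref{rem:manifold-C}, which identifies $H^{\ast}_{\mathcal{E}_{c}}$ and $H^{\ast}_{\Cubical_{c}}$ of a manifold with classical de Rham cohomology with compact support, together with the straightforward additivity $\Omega^{\ast}_{c}(\coprod_{\alpha}X_{\alpha})=\bigoplus_{\alpha}\Omega^{\ast}_{c}(X_{\alpha})$, settles them.

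The main obstacle --- and the reason this is recorded only as a conjecture --- is the piece $V$. In the non-compact case one disposes of $V$ at once: it deformation retracts onto $X^{(n-1)}$, so homotopy invariance (Theorem~\ref{thm:homotopy-invariance}) plus the inductive hypothesis give $H^{\ast}_{\mathcal{E}}(V)\cong H^{\ast}_{\mathcal{E}}(X^{(n-1)})\cong H^{\ast}_{\Cubical}(X^{(n-1)})\cong H^{\ast}_{\Cubical}(V)$. For compact supports this collapses on two counts: $H^{\ast}_{\mathcal{E}_{c}}$ is not homotopy invariant (already $H^{\ast}_{\mathcal{E}_{c}}(\Int{B^{n}})\cong H_{dR_{c}}^{\ast}(\real^{n})$ is concentrated in degree $n$, not in degree $0$), and the radial retraction of $V$ onto the lower skeleton is not a proper map, so neither homotopy invariance nor functoriality under zero-extension replaces $V$ by $X^{(n-1)}$. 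What is really needed is a new ingredient: a compact-support long exact sequence for the CW pair $(X^{(n)},X^{(n-1)})$, namely $\cdots\to H^{q}_{c}(U)\to H^{q}_{c}(X^{(n)})\to H^{q}_{c}(X^{(n-1)})\to H^{q+1}_{c}(U)\to\cdots$ valid for the $\mathcal{E}$-theory and the $\Cubical$-theory simultaneously and compatible under $\Cube^{\ast}$; equivalently, a direct proof that $\Cube^{\ast}\colon H^{\ast}_{\mathcal{E}_{c}}(V)\to H^{\ast}_{\Cubical_{c}}(V)$ is an isomorphism, presumably by furnishing $V$ with a sufficiently controlled nice open covering that re-enters the same induction. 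I would expect a local-finiteness hypothesis on $X$ to be needed as well, in parallel with the classical theory of cohomology with compact supports, and I expect the identification for $V$ to be exactly the point at which the argument stalls without such extra input.
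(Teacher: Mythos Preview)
Your analysis is apt, but note that the statement you are asked to prove is recorded in the paper as a \emph{conjecture} (environment \texttt{conj}), not a theorem: the paper supplies no proof whatsoever of Conjecture~\ref{conj:deRham-compact-support-smooth} (nor of its topological analogue, Conjecture~\ref{cor:deRham-compact-support-top}). There is therefore nothing to compare your attempt against.

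That said, your diagnosis of \emph{why} it is only a conjecture is exactly right and matches what the paper's structure implicitly suggests. The skeletal induction used for Theorem~\ref{thm:deRham-smooth} rests on homotopy invariance to replace $V$ by $X^{(n-1)}$, and you correctly observe that this step has no compact-support analogue: $H^{\ast}_{c}$ is not a homotopy functor, and the radial retraction of $V$ onto $X^{(n-1)}$ is not proper. Your identification of the missing ingredient --- a compact-support long exact sequence of the pair $(X^{(n)},X^{(n-1)})$ valid simultaneously for the $\mathcal{E}$- and $\Cubical$-theories and compatible under $\Cube^{\ast}$ --- is precisely the gap the paper leaves open. Until such a tool (or a direct argument for $\Cube^{\ast}\colon H^{\ast}_{\mathcal{E}_{c}}(V)\to H^{\ast}_{\Cubical_{c}}(V)$ being an isomorphism) is supplied, the statement remains conjectural, and your write-up should be read as an explanation of the obstruction rather than as a proof.
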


%
%
\subsection*{Acknowledgements}
This research was supported by Grant-in-Aid for Scientific Research (B) \#22340014, (A) \#23244008 and for Exploratory Research \#24654013 from Japan Society for the Promotion of Science.

%
%

\end{document}